\newtheorem{lemma}{Lemma}
\newtheorem{theorem}{Theorem}
\numberwithin{equation}{section}
\title{PhD}
\author{}
\date{January 2021}
\begin{document}

\begin{center}
    \textbf{Finite Rank Perturbations of Heavy-Tailed Wigner Matrices}
\end{center}

\begin{center}
     Simona Diaconu\footnote{Department of Mathematics, Stanford University, sdiaconu@stanford.edu}
\end{center}

\begin{abstract}
    One-rank perturbations of Wigner matrices have been closely studied: let \(P=\frac{1}{\sqrt{n}}A+\theta vv^T\) with \(A=(a_{ij})_{1 \leq i,j \leq n} \in \mathbb{R}^{n \times n}\) symmetric, \((a_{ij})_{1 \leq i \leq j \leq n}\) i.i.d. with centered standard normal distributions, and \(\theta>0, v \in \mathbb{S}^{n-1}.\) It is well known \(\lambda_1(P),\) the largest eigenvalue of \(P,\) has a phase transition at \(\theta_0=1:\) when \(\theta \leq 1,\) \(\lambda_1(P) \xrightarrow[]{a.s.} 2,\) whereas for \(\theta> 1,\) \(\lambda_1(P) \xrightarrow[]{a.s.} \theta+\theta^{-1}.\) Under more general conditions, the limiting behavior of \(\lambda_1(P),\) appropriately normalized, has also been established: it is normal if \(||v||_{\infty}=o(1),\) or the convolution of the law of \(a_{11}\) and a Gaussian distribution if \(v\) is concentrated on one entry. These convergences require a finite fourth moment, and this paper considers situations violating this condition. For symmetric distributions \(a_{11},\) heavy-tailed with index \(\alpha \in (0,4),\) the fluctuations are shown to be universal and dependent on \(\theta\) but not on \(v,\) whereas a subfamily of the edge case \(\alpha=4\) displays features of both the light- and heavy-tailed regimes: two limiting laws emerge and depend on whether \(v\) is localized, each presenting a continuous phase transition at \(\theta_0=1, \theta_0 \in [1,\frac{128}{89}],\) respectively. These results build on our previous work~\cite{oldpaper} which analyzes the asymptotic behavior of \(\lambda_1(\frac{1}{\sqrt{n}}A)\) in the aforementioned subfamily.
\end{abstract}

\section{Introduction}\label{intro}
Large dimensional random matrices are used in numerous disciplines, including statistics, physics, genetics, and their spectral structure is central in most such applications (i.e., their eigenvalues and eigenvectors). The best understood models assume the building blocks are i.i.d. (observed features for sample covariance, and entries for Wigner matrices), with a vast amount of publications examining their limiting empirical distribution, the fluctuations of their edge eigenvalues, and the level spacings in the bulk of their spectrum (e.g., \cite{marchenkopastur}, \cite{johnstone}, \cite{tracywidom}). However, since the true distribution is unobservable, comprehending how robust these results are to changes in the underlying laws is indispensable. This has motivated the study of perturbations of random matrices, which come in various guises: multiplicative or additive, random or deterministic, finite or full rank. One of Péché's results (\cite{peche}) is seminal in a branch of this literature: suppose \(A=(a_{ij})_{1 \leq i,j \leq n} \in \mathbb{C}^{n \times n}\) is a Hermitian Wigner matrix with \((a_{ij})_{1 \leq i \leq j \leq n}\) independent, \(a_{ii}=N(0,c_{n,i}), a_{ij}=N(0,1), 1 \leq i<j \leq n,\) \(c_{n,i} \leq C.\) For the one-rank perturbation \(P(\theta)=\frac{1}{\sqrt{n}}A+\theta vv^*,\) with deterministic \(\theta>0, v \in \mathbb{C}^n, ||v||=1,\) the largest eigenvalue of \(P(\theta),\)
\begin{equation}\label{a.s.}
    \lambda_1(P(\theta)) \xrightarrow[]{a.s.} \begin{cases} 
    2, & \theta \leq 1 \\
    \theta+\theta^{-1}, & \theta \geq 1
    \end{cases}.
\end{equation}
Put differently, something occurs at \(\theta_0=1:\) below this threshold, the perturbation does not affect the asymptotic behavior of \(\lambda_1(P(\theta)),\) whereas above it, it does so through \(\theta\) (given the rotational invariance of Gaussian matrices, \(v,\) the eigenvector of the perturbation, should not affect such limits) in a way that allows identifying this parameter (\(x \to x+x^{-1}\) is injective on \([1,\infty)\)). Prior to this result, a similar phenomenon has been discovered for spiked covariance matrices. Let \(X=(x_{ij})_{1 \leq i \leq n, 1 \leq j \leq m}\) have i.i.d. columns (real- or complex-valued), and \(S=\frac{1}{m}XX^*\) be its sample covariance matrix. When \(\lim_{n \to \infty}{\frac{n}{m}}=\gamma \in (0,1)\) (part of the so-called random matrix theory regime), Bay and Yin~\cite{baiyin} showed \(\lambda_1(S) \xrightarrow[]{a.s.} (1+\sqrt{\gamma})^2\) (assuming solely \(\mathbb{E}[x_{11}]=0, \mathbb{E}[x^2_{11}]=1, \mathbb{E}[x^4_{11}]<\infty\)), after which Baik and Silverstein~\cite{baiksilv} found the almost sure limit of such covariance matrices with a feature of variance \(\theta \geq 1\) instead of one (their setting is more general than stated here: see \cite{baiksilv} for details):  
\[\lambda_1(S(\theta)) \xrightarrow[]{a.s.} \begin{cases} 
(1+\sqrt{\gamma})^2, & 1 \leq \theta \leq 1+\sqrt{\gamma} \\
\theta+\frac{\gamma \theta}{\theta-1}, & \theta \geq 1+\sqrt{\gamma}
\end{cases}.\]
Baik et al.~\cite{bbp}, expounded on this in the complex Gaussian case and discovered these two regimes yield fluctuations described by Tracy-Widom and normal distributions, respectively: this became known as the BBP-phase transition, after the initials of three authors of \cite{bbp}. 
\par
The publication~\cite{peche} generated plenty of interest, and subsequently several authors looked at finite rank perturbations of random matrices (mostly Wigner but also Wishart in the random matrix theory regime: e.g., \cite{maidaetal}). A finite fourth moment of the underlying distribution is a key assumption shared in all of these investigations, and universality (i.e., validity of these results for non-Gaussian scenarios) requires oftentimes a regularity such as the law satisfying a Poincaré or a log-Sobolev inequality. Although the latter set of conditions is likely suboptimal, the former is necessary inasmuch as the second moment relates to the convergence of the empirical spectral distribution (\cite{belinschietal}), while the fourth ties back to the almost sure limit of the largest eigenvalue being finite or infinite (\cite{baiyin2}).
\par
A classical result by Soshnikov~\cite{soshnikov} states the largest eigenvalues of Wigner matrices with heavy-tailed entries (i.e., their laws are regularly varying functions with index \(\alpha \in (0,4);\) notably, the fourth moment is infinite) converge to a Poisson point process. A transition occurs at \(\alpha=4:\) above it, the fluctuations are given by Tracy-Widom distributions (Lee and Yin~\cite{leeyin}), while at \(\alpha=4,\) the limiting laws can be continuous functions of Fréchet type (\cite{oldpaper}). Suppose \(A=(a_{ij})_{1 \leq i,j \leq n} \in \mathbb{R}^{n \times n}\) is symmetric, \((a_{ij})_{1 \leq i \leq j \leq n}\) i.i.d., \(a_{11} \overset{d}{=} -a_{11}, \mathbb{E}[a^2_{11}]=1, \lim_{x \to \infty}{x^4\mathbb{P}(|a_{11}|>x)}=c \in (0,\infty);\) then 
\begin{equation}\label{oldth}
    \lambda_1(\frac{1}{\sqrt{n}}A) \Rightarrow f(\zeta_c),
\end{equation}
where
\begin{equation}\label{fdef}
    f(x)=\begin{cases}
    2, & 0<x<1 \\
    x+\frac{1}{x}, & x \geq 1 
    \end{cases},
\end{equation}
and \(\zeta_c>0\) has a Fréchet distribution with shape and scale parameters \(4,(\frac{c}{2})^{1/4},\) respectively: for all \(x>0,\)
\begin{equation}\label{zetadef}
    \mathbb{P}(\zeta_c \leq x)=\exp(-\frac{cx^{-4}}{2}).
\end{equation}
Note (\ref{a.s.}) can be rewritten as \(\lambda_1(P(\theta)) \xrightarrow[]{a.s.} f(\theta);\) moreover, a function related to \(f\) appears in Tikhomirov and Youssef~\cite{tikhomirovyoussef} (\(\rho_n=\theta_n+\frac{np_n}{\theta_n}\)), in which the authors reveal a connection between the BBP phase-transition and sparse symmetric Wigner matrices (\(a_{11}=b_n\xi,\) \(\xi\) is a bounded random variable, and \(b_n\) is Bernoulli distributed of parameter \(p_n\) with \(\lim_{n \to \infty}{np_n}=\infty;\) theorem \(A\) states \(\frac{\lambda_k(A)}{\rho_n} \xrightarrow[]{p} 1\) for \(k \geq 1\) fixed, where \(\theta_n=\sqrt{\max_{1 \leq i \leq n}{(\sum_{1 \leq j \leq n}{a^2_{ij}}-np_n,np_n})}\)) and offer a heuristic for it (see discussion after the statement of theorem \(A\)).  
\par
Given the discrepancies between the light- and the heavy-tailed cases together with the large body of literature on finite rank perturbations of matrices with entries of the former type, a natural question is what happens with these perturbations when the underlying random variables belong to the latter category. Before analyzing this regime, consider two of the most commonly studied models in the light-tailed scenario:
\begin{equation}\label{perturb1}
    P_n=\frac{1}{\sqrt{n}}A+diag(\theta,0,0, \hspace{0.05cm} ... \hspace{0.05cm},0),
\end{equation}
\begin{equation}\label{perturb2}
    P_n=\frac{1}{\sqrt{n}}A+(\theta/n)_{1 \leq i,j \leq n},
\end{equation}
where \(A=(a_{ij})_{1 \leq i,j \leq n}\) is symmetric, \((a_{ij})_{1 \leq i \leq j \leq n}\) are i.i.d., \(\mathbb{E}[a_{11}]=0,\mathbb{E}[a^2_{11}]=1,\) and \(\theta>0\) is deterministic. Then \(\lambda_1(P_n)\) exhibits completely different behaviors in these two situations when \(\theta>1:\) Capitaine et al. show in \cite{capitaineetal} that for (\ref{perturb1}) its fluctuations depend heavily on \(\mu,\) the law of \(a_{11},\)
\[\frac{\theta^2}{\theta^2-1}\sqrt{n}(\lambda_1(P_n)-(\theta+\theta^{-1})) \Rightarrow \mu*N(0,v_{\theta}), \hspace{0.5cm} v_{\theta}=\frac{1}{2} \cdot \frac{\mathbb{E}[a_{11}^4]-3}{\theta^2}+\frac{1}{\theta^2-1},\]
as long as \(\mu\) is symmetric and satisfies a Poincaré inequality, whereas for (\ref{perturb2}), they are universal,
\[\sqrt{n}(\lambda_1(P_n)-(\theta+\theta^{-1})) \Rightarrow N(0,\sigma^2_{\theta}), \hspace{0.5cm} \sigma_\theta=\sqrt{1-\frac{1}{\theta^2}}\]
when \(\mu\) is symmetric with all its moments finite. The former result has been extended by Capitaine et al. in \cite{capitaineetal2}, where the authors prove that, up to a large extent, universal fluctuations of \(\lambda_1(M_n)\) are equivalent to the eigenvectors of the matrix perturbation being delocalized (i.e., their \(l_{\infty}\) norms converge to \(0\) as \(n \to \infty\)).
\par
Take now the analogues of (\ref{perturb1}) and (\ref{perturb2}) for the heavy-tailed regime. In virtue of the behavior in the i.i.d. case, the counterparts of (\ref{perturb1}) and (\ref{perturb2}) require a scaling dictated by the law of \(a_{11}.\) Suppose this distribution is regularly varying with index \(\alpha \in (0,4):\) i.e.,
\[\lim_{x \to \infty}{\frac{\mathbb{P}(|a_{11}| \geq x)}{x^{-\alpha} L(x)}}=1,\]
\(L:(0,\infty) \to (0,\infty)\) slowly varying (a measurable function with \(\lim_{x \to \infty}{\frac{L(ax)}{L(x)}}=1\) for any \(a>0\)). The restriction to \(\mathbb{N}\) of the regularly varying function \(b:(0,\infty) \to (0,\infty)\) with index \(2/\alpha,\) 
\[b(y)=\inf{\{x>0, \mathbb{P}(|a_{11}| \geq x) \leq 2y^{-2}\}}:=b_y\]
provides the correct normalization. 
A notable feature of \(b\) is
\[n^{2/\alpha-\epsilon} \leq b_n \leq n^{2/\alpha+\epsilon}\]
when \(\epsilon>0, n \geq n(\epsilon),\) a consequence of Karamata's representation theorem: 
\[L(x)=\exp{(\eta(x)+\int_{x}^{B}{\frac{\epsilon(t)}{t}dt})}\] 
for bounded, measurable functions \(\eta,\epsilon, \lim_{x \to \infty}{\eta(x)}=c(L) \in \mathbb{R}, \lim_{x \to \infty}{\epsilon(x)}=0,\) yielding
\(\lim_{x \to \infty}{\frac{L(x)}{x^\epsilon}}=0, \newline \lim_{x \to \infty}{x^{\epsilon}L(x)}=\infty\) for \(\epsilon>0.\) The finite rank perturbations above become
\[P_n(\theta,v_n)=b^{-1}_nA+\theta v_nv_n^T\]
for \(v_n=e_1, v_n=\frac{1}{\sqrt{n}}(e_1+e_2+...+e_n),\) respectively.

\subsection{Results}

Two families of distributions are considered: symmetric and regularly varying with index \(\alpha \in (0,4],\) with the case \(\newline \alpha=4\) partially covered. In the heavy-tailed regime \(\alpha \in (0,4)\) (i.e., infinite fourth moment), the fluctuations are universal, independent of the eigenvector \(v_n,\) and the limiting law always depends on \(\theta>0.\) 

\begin{theorem}\label{th2}
Suppose the distribution of \(a_{11}\) is symmetric and regularly varying with index \(\alpha \in (0,4).\) Let \(\theta>0, \newline v_n \in \mathbb{S}^{n-1}\) be deterministic, and
\[P_n(\theta,v_n)=b^{-1}_nA+\theta v_n^Tv_n.\]
Then
\begin{equation}\label{eq100}
    \lambda_{1}(P_n(\theta,v_n)) \Rightarrow \max{(\theta,E_{\alpha})},
\end{equation}
where \(E_{\alpha}>0, \mathbb{P}(E_{\alpha}<x)=\exp(-x^{-\alpha})\) for \(x>0.\)
\end{theorem}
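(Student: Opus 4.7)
The plan is to reduce the problem to a finite-dimensional rank-one perturbation by splitting $A$ into a sparse large-entry part and a bulk residual with small operator norm. Fix $\epsilon > 0$ and let $A_\epsilon$ be the symmetric matrix obtained from $A$ by keeping the entries $a_{ij}$ with $|a_{ij}| > \epsilon b_n$ (and zeroing out the rest), and set $R_\epsilon = b_n^{-1}(A - A_\epsilon)$. By the definition of $b_n$, the count $|\{(i,j) : i \le j, \, |a_{ij}| > \epsilon b_n\}|$ is asymptotically Poisson with mean $\sim \epsilon^{-\alpha}$, so $A_\epsilon$ is $O_P(1)$-sparse; with probability tending to one its non-zero entries occupy disjoint index pairs, and its rare diagonal contribution may be discarded. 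The residual $\|R_\epsilon\|_{\mathrm{op}}$ will be controlled through truncated-Wigner operator-norm bounds (via second moments for $\alpha \in (2,4)$, and via Soshnikov-style moment estimates for $\alpha \in (0,2)$), yielding $\|R_\epsilon\|_{\mathrm{op}} = \delta(\epsilon) + o_P(1)$ with $\delta(\epsilon) \to 0$.

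With the residual negligible, Weyl's inequality gives $|\lambda_1(P_n) - \lambda_1(b_n^{-1}A_\epsilon + \theta v_n v_n^T)| \le \|R_\epsilon\|_{\mathrm{op}}$. Let $S$ be the (random) set of indices touched by the non-zero entries of $A_\epsilon$. Conditionally on $|S|$, the exchangeability of the entries of $A$ makes $S$ uniformly distributed among size-$|S|$ subsets of $[n]$, whence
\[
\mathbb{E}\bigl[\|v_n|_S\|^2 \,\big|\, |S|\bigr] \;=\; \frac{|S|}{n} \,\|v_n\|^2 \;=\; \frac{|S|}{n},
\]
so $\|v_n|_S\|^2 \to 0$ in probability: the deterministic unit vector $v_n$ is asymptotically orthogonal to the (random, finite-dimensional) range of $A_\epsilon$.

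To finish, diagonalize $b_n^{-1}A_\epsilon = \sum_{i=1}^m \mu_i u_i u_i^T$ with the $u_i$ supported in $S$. In the basis $\bigl(u_1,\ldots,u_m,\, v_n|_{S^c}/\|v_n|_{S^c}\|\bigr)$, the restriction of $b_n^{-1}A_\epsilon + \theta v_n v_n^T$ to the $(m+1)$-dimensional invariant subspace carrying its non-zero spectrum is represented by
\[
\mathrm{diag}(\mu_1,\ldots,\mu_m,0) + \theta \, \tilde v \tilde v^T, \qquad \tilde v = \bigl(v_n^T u_1, \ldots, v_n^T u_m, \,\|v_n|_{S^c}\|\bigr).
\]
Since $|v_n^T u_i| \le \|v_n|_S\| \to 0$ and $\|v_n|_{S^c}\| \to 1$ in probability, this matrix converges entrywise to $\mathrm{diag}(\mu_1,\ldots,\mu_m,\theta)$, whose largest eigenvalue is $\max(\lambda_1(b_n^{-1}A_\epsilon), \theta)$. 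Continuity of eigenvalues on the finite-dimensional space concludes the step, and Soshnikov-type Poisson convergence gives $\lambda_1(b_n^{-1}A_\epsilon) \Rightarrow E_\alpha^{(\epsilon)}$, a Fréchet variable conditioned to exceed $\epsilon$ (with atom at $0$), which converges to $E_\alpha$ as $\epsilon \to 0$. Sending $n \to \infty$ and then $\epsilon \to 0$ yields (\ref{eq100}).

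The step I expect to be the main obstacle is the operator-norm bound on $R_\epsilon$ for $\alpha \in (0,2)$: the truncated second moment $\mathbb{E}[a_{11}^2 \mathbf{1}_{|a_{11}| \le \epsilon b_n}] \sim c\,(\epsilon b_n)^{2-\alpha}$ diverges, so the classical Bai--Yin estimate does not apply directly and the truncated entries remain many orders of magnitude larger than their standard deviation. I anticipate needing the moment method with combinatorial path counting, in the spirit of Soshnikov's original proof, to obtain $\|R_\epsilon\|_{\mathrm{op}} = \delta(\epsilon) + o_P(1)$ with $\delta(\epsilon) \to 0$; the rest of the argument is then a clean finite-dimensional linear algebra calculation combined with the already-established Poisson convergence for the top of the spectrum.
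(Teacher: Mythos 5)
Your argument is correct and takes a genuinely different route from the paper's. The paper never isolates $b_n^{-1}A_\epsilon + \theta v_nv_n^T$ as a finite-rank object; instead it absorbs the perturbation directly into the trace computation $\mathbb{E}[tr((P_s + P_{B,\kappa})^{2p})\chi_{E(S,\kappa,M)}]$ in subsection~\ref{7.3}, expanding each product into C1/C2/C3 cycle classes, controlling the C1 (pure $A$) cycles via Lemma~\ref{lemma0}, the C3 (pure $\theta v_iv_j$) cycles by $\|v\|=1$, and the mixed C2 cycles through a careful cluster-erasure count (see~\eqref{cp}), with a parallel lower-bound argument in subsection~\ref{7.2} via $v^TPv$ and the $\frac{1}{\sqrt 2}(e_{i_0}\pm e_{j_0})$ test vector. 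You instead peel off $\theta v_nv_n^T$ before doing any moment analysis: writing $A = A_\epsilon + (A - A_\epsilon)$, Weyl reduces the problem to the finite-rank random matrix $b_n^{-1}A_\epsilon + \theta v_nv_n^T$, whose spectrum you compute exactly on the invariant subspace $\mathbb{R}^S \oplus \mathrm{span}(v_n|_{S^c})$; the key observation that $v_n$ is asymptotically orthogonal to the random index set $S$ (because $S$ is uniform of bounded size, so $\|v_n|_S\|^2 = O_P(1/n)$) plays the role that condition~(b) on $\mathcal{S}$ and the bound $|V|\le\sqrt n$ play in the paper. Your route buys a cleaner and more conceptual handling of the perturbation—no C1/C2/C3 bookkeeping or trace-difference estimates for the perturbed matrix—at the cost of needing the operator-norm bound $\|R_\epsilon\|_{\mathrm{op}}=o_P(1)+\delta(\epsilon)$, which is precisely the unperturbed estimate that occupies subsection~\ref{7.1} and Lemma~\ref{lemma0} of the paper (a single cut at $\epsilon b_n$ is not enough: the paper's three-way intermediate split at $n^x$, $n^{3/(2\alpha)+\delta}$, $\kappa b_n$ is essentially forced, since the entries just below $\epsilon b_n$ are far too large for a one-shot Bai--Yin-type bound, and must be isolated into a row-sparse matrix of small norm). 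Two small clean-ups worth making explicit: the uniformity of $S$ given $|S|$ should be deduced from the exchangeability of the \emph{edge set} $\{(i,j): i<j,\ |a_{ij}|>\epsilon b_n\}$ (of which $S$ is the endpoint set), and the $O_P(1)$ diagonal event $\{|a_{ii}|>\epsilon b_n\}$ should be explicitly shown to have probability $O(n\cdot n^{-2})\to 0$ before discarding it.
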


The variational characterization of the largest eigenvalue of symmetric matrices \(P \in \mathbb{R}^{n \times n},\)
\begin{equation}\label{varchar}
    \lambda_1(P)=\sup_{u \in \mathbb{S}^{n-1}}{u^TPu},
\end{equation}
and its behavior in the unperturbed case 
\begin{equation}\label{convmax}
    \lambda_{1}(b^{-1}_nA) \Rightarrow E_{\alpha}
\end{equation}
offer some intuition for (\ref{eq100}). Soshnikov proves (\ref{convmax}) in \cite{soshnikov} by justifying \(\lambda_{1}(b^{-1}_nA) \approx b_n^{-1}\max_{1 \leq i \leq j \leq n}{|a_{ij}|},\) which can be easily seen to converge in law to \(E_{\alpha}\) (symmetry is not needed here). In particular, his proof implies the leading eigenvector of \(b^{-1}_nA\) is localized on two entries: if \(|a_{i_0j_0}|=\max_{1 \leq i \leq j \leq n}{|a_{ij}|},\) then with high probability \(i_0 \ne j_0,\) and \(u \in \mathbb{S}^{n-1}\) with \(u_{i_0}=|u_{j_0}|=\frac{1}{\sqrt{2}}, u_{j_0}a_{i_0j_0} \geq 0\) yields 
\[u^Tb^{-1}_nAu=b_n^{-1}|a_{i_0j_0}|+(2b_n)^{-1}(a_{i_0i_0}+a_{j_0j_0}) \approx b_n^{-1}\max_{1 \leq i \leq j \leq n}{|a_{ij}|}\]
as the maximum among the diagonal entries of \(A\) is negligible when scaled by \(b_n.\) At a high level, this entails the leading eigenvectors of the components of \(P_n(\theta,v_n)=b^{-1}_nA+\theta v_n^Tv_n\) are \(u,v_n,\) which with high probability are orthogonal because most of the entries of \(v_n\) are small (e.g., fewer than \(n^{1/2}\) have absolute value at least \(n^{-1/4}\)), making \(v_{n,i_0}, v_{n,j_0}\) likely to be among them. Furthermore, the same occurs if \(k\) leading eigenvectors of \(b^{-1}_nA\) are considered (\(k\) fixed) since these are also equally distributed on two entries with high probability, suggesting together with (\ref{varchar}) and (\ref{convmax}) that \(\lambda_1(P) \approx \max{(E_\alpha,\theta)}.\)
\par
In the edge case \(\alpha=4,\) \(v_n\) influences the asymptotic behavior of \(\lambda_1(P)\) primarily by its level of localization. If it is delocalized (i.e., \(||v_n||_{\infty}=o(1)\)), then the limiting distribution is universal. 

\begin{theorem}\label{th3}
Suppose \(a_{11} \overset{d}{=} -a_{11}, \lim_{x \to \infty}{x^4\mathbb{P}(|a_{11}|>x)}=c \in (0,\infty), \mathbb{E}[a^2_{11}]=1.\) Let \(\theta>0, v_n \in \mathbb{S}^{n-1}\) be deterministic, \(\lim_{n \to \infty}{||v_n||_{\infty}}=0,\) and 
\[P_n(\theta,v_n)=\frac{1}{\sqrt{n}} A+\theta v_n^Tv_n.\]
Then
\[\lambda_1(P_n(\theta,v_n)) \Rightarrow \max{(f(\zeta_c),F(\theta))},\]
where \(f,\zeta_c\) are defined by (\ref{fdef}), (\ref{zetadef}), respectively, and \(F:(0,\infty) \to (0,\infty),\)
\[F^2(\theta)=\limsup_{p \to \infty}{(s_1(\theta,p))^{1/p}},\]
\[s_1(\theta,p)=\sum_{s \geq 1, s \leq l \leq p-s/2}{\binom{2p-2l-1}{s-1}\theta^{2p-2l}\sum_{L_1, L_2, \hspace{0.05cm} ... \hspace{0.05cm},L_s \geq 1, L_1+...+L_s=l}{C_{L_1}C_{L_2}...C_{L_s}}}\]
for \(C_l=\frac{1}{l+1}\binom{2l}{l}, l \in \mathbb{N}.\)
\end{theorem}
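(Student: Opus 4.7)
The plan is to bound $\lambda_1(P_n)$ above and below by quantities whose joint asymptotic distribution is $(f(\zeta_c), F(\theta))$, using a high-moment expansion of $v_n^T P_n^{2p} v_n$ for the lower bound and the rank-one secular equation for the upper bound. The geometric input that forces the two bounds to match the claimed maximum is that, by the analysis behind~\eqref{convmax} carried out in~\cite{oldpaper}, the top eigenvector $u_1$ of $W := \tfrac{1}{\sqrt n}A$ is concentrated on two coordinates while $\|v_n\|_{\infty} \to 0$, so the perturbation only ``sees'' the delocalized bulk.

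\textbf{Lower bound.} Since $\theta v_n v_n^T \succeq 0$, Weyl gives $\lambda_1(P_n) \geq \lambda_1(W)$, which together with~\eqref{oldth} delivers the first argument of the maximum. For the second, I would start from $\lambda_1(P_n)^{2p} \geq v_n^T P_n^{2p} v_n$ and expand, using $(v_n v_n^T) W^{a} (v_n v_n^T) = (v_n^T W^{a} v_n)\, v_n v_n^T$,
\[ v_n^T P_n^{2p} v_n = \sum_{k=0}^{2p} \theta^k \sum_{a_0 + \cdots + a_k = 2p - k,\, a_i \geq 0} \prod_{i=0}^{k} v_n^T W^{a_i} v_n. \]
Under $\|v_n\|_\infty \to 0$ and after the $n^{1/4}$-truncation of heavy entries already used in~\cite{oldpaper}, each $v_n^T W^{b} v_n$ concentrates to the Catalan moment $C_{b/2}$ for even $b$ and to $0$ for odd $b$. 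Grouping by the number $s$ of strictly positive $a_i$, relabeling $k = 2(p-l)$ and $a_i = 2L_i$, and collecting the products of $C_{L_i}$ reproduces the sum $s_1(\theta,p)$ up to a $\mathrm{poly}(p)$ factor absorbed by the $2p$-th root. Taking $(v_n^T P_n^{2p} v_n)^{1/(2p)}$ and sending $p \to \infty$ along a diagonal sequence $p_n$ slow enough for the concentration to remain uniform yields $\liminf_n \lambda_1(P_n) \geq F(\theta)$.

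\textbf{Upper bound.} For $\theta > 0$ and $v_n$ not in the orthogonal complement of $u_1$ (a vanishing-probability event), $\lambda_1(P_n)$ is the unique root $\lambda > \lambda_1(W)$ of the rank-one secular equation
\[ \theta \sum_{i=1}^n \frac{|\langle v_n, u_i\rangle|^2}{\lambda - \mu_i} = 1, \qquad W = \sum_{i=1}^n \mu_i u_i u_i^T. \]
The localization of $u_1$ on two coordinates gives $|\langle v_n, u_1\rangle|^2 \leq 2\|v_n\|_\infty^2 = o(1)$. On the event $\{f(\zeta_c) > F(\theta)\}$ this $o(1)$ outlier contribution forces $\lambda_1(P_n) - \mu_1 \to 0$, because the bulk sum $\sum_{i \geq 2}$ evaluated at $\lambda = \mu_1$ is strictly less than $\theta^{-1}$; on the complement, the bulk sum alone pins down the root, which by the same moment expansion (applied this time to the resolvent) is asymptotically the fixed point whose $p$-th moment growth is exactly $F(\theta)^{2p}$. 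Non-atomicity of $f(\zeta_c)$ at $F(\theta)$ (a Fr\'echet property) then upgrades convergence in probability to convergence in distribution of the maximum.

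\textbf{Main obstacle.} The delicate step is controlling the bulk part of the secular sum in the $\alpha = 4$ regime, where the classical local semicircle law does not apply because the fourth moment just barely diverges. I would decompose $A = A^{\leq} + A^{>}$ with $A^{\leq}$ the truncation at scale $n^{1/4}$ (an essentially light-tailed Wigner with classical edge behavior) and $A^{>}$ the sparse matrix carrying the Poisson-like extremes responsible for $f(\zeta_c)$. Because $v_n$ is delocalized and $A^{>}$ is supported on $O(1)$ rows, the perturbation interacts only with $A^{\leq}$, enabling BBP-type arguments for that piece; ensuring that the $\limsup$ in the definition of $F$ is genuinely attained along the relevant subsequence, and that the concentration of $v_n^T (W^{\leq})^{b} v_n$ holds uniformly in $b \leq p_n$ as $p_n \to \infty$, are the main technical hurdles in closing the sandwich.
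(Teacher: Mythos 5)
Your upper-bound strategy rests on the premise that ``the top eigenvector $u_1$ of $W$ is concentrated on two coordinates,'' which is precisely the picture that \emph{breaks down} at $\alpha=4$: as the paper discusses after Theorem~\ref{th1}, when $\max(A)\le 1$ the matrix behaves like a light-tailed Wigner matrix and $u_1$ is delocalized, while even when $\max(A)>1$ the localization is only partial and nothing like the clean two-coordinate profile of the $\alpha<4$ case. This matters because the secular equation, combined with the usual BBP resolvent heuristic $\theta^{-1}=v^T(\lambda-W)^{-1}v\approx m_{\mathrm{sc}}(\lambda)$, would pin the outlier at $\theta+\theta^{-1}=f(\theta)$, whereas the theorem asserts the delocalized answer is $F(\theta)\ne f(\theta)$ (Lemma~\ref{lemma79} shows $G_1(\theta)>f(\theta)$ for $\theta>1$, so $F$ genuinely deviates). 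Your sentence that the bulk sum ``is asymptotically the fixed point whose $p$-th moment growth is exactly $F(\theta)^{2p}$'' has no supporting argument and, more importantly, cannot come out of a semicircle-type local law because the whole point of the $\alpha=4$ anomaly is that the resolvent and the moments of $W$ carry extra contributions from the nearly-critical tail. The paper sidesteps the resolvent entirely and instead computes $\mathbb E[\mathrm{tr}(P_s^{2p})-\mathrm{tr}(A_s^{2p})]$, $\mathbb E_*[\mathrm{tr}((P_s+P_{B,\kappa})^{2p})-\mathrm{tr}(P_s^{2p})]$ combinatorially (subsections~\ref{3.1},~\ref{3.2}), which is where $s_1(\theta,p)$ and $F$ actually come from.

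Your lower bound has a separate gap: $v_n^TP_n^{2p}v_n=\sum_i\lambda_i(P_n)^{2p}|\langle v_n,u_i(P_n)\rangle|^2$ is dominated by $\max_i|\lambda_i|^{2p}=\|P_n\|^{2p}$, not by $\lambda_1(P_n)^{2p}$, so what you get is $\|P_n\|\gtrsim F(\theta)$, not $\lambda_1(P_n)\gtrsim F(\theta)$. Converting a statement about $\|P_n\|$ to one about $\lambda_1(P_n)$ is nontrivial here precisely because the symmetric heavy-tailed block structure makes the extreme negative eigenvalue comparable in magnitude to the extreme positive one; the paper handles this with the odd-power trace inequalities~(\ref{eq200}) and Lemma~\ref{lemma9old}, which force $\lambda_1$ up to $\|\cdot\|$ via a variance argument. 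You also assert that each $v_n^TW^bv_n$ concentrates at $C_{b/2}$ uniformly for $b\le p_n\to\infty$ and that the products $\prod_i v_n^TW^{a_i}v_n$ jointly concentrate, but none of this is argued, and the $\alpha=4$ tail produces extra contributions to $v_n^TW^bv_n$ for $b$ large that are not captured by Catalan numbers unless one first makes the same sparse/dense decomposition and conditioning the paper does. In short, your decomposition $A=A^\le+A^{>}$ is the right starting point, but the heavy lifting---odd-versus-even moment comparison to identify $\lambda_1$ with $\|\cdot\|$, and the cycle-cluster combinatorics that produce $s_1(\theta,p)$ and then $F$---is absent.
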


\textit{Remark:} For \(G_1,G_2:(0,\infty) \to (0,\infty),\) 
\[G_1(y)=y \cdot \frac{2-2x}{2-3x}, \hspace{0.5cm} x=x(y) \in (0,2/3), \hspace{0.5cm} \frac{(2-3x)^3}{x(1-x)^2}=y^2,\]
\[G_2(y)=y \cdot \frac{2-2x}{2-7x/3}, \hspace{0.5cm} x=x(y) \in (0,6/7), \hspace{0.5cm} \frac{(6-7x)^{7/3}}{x^{1/3}(1-x)^2}=\frac{36y^2}{5^{2/3}},\]
it is shown that
\[\max{(2, G_2(\theta))} \leq F(\theta) \leq \max{(2,G_1(\theta))},\]
\[G_2(\theta) \leq 2 \Longleftrightarrow \theta \leq \frac{128}{89}, \hspace{0.5cm} G_2(\theta)<f(\theta), \hspace{0.5cm} \begin{cases}
    G_1(\theta)< f(\theta), & \theta < 1 \\
    G_1(\theta)> f(\theta), & \theta > 1
    \end{cases}.\]
(see subsection~\ref{3.1}, primarily Lemmas~\ref{lemma4}-\ref{lemma79}). In particular, \(F(\theta)=2\) for \(\theta \leq 1,\) and \(F(\theta)>2\) for \(\theta>\frac{128}{89},\) implying there is a phase transition at \(\theta_0=\inf{\{x, F(x)>2\}} \in [1,\frac{128}{89}],\) whereas in the localized scenarios covered below this occurs at \(\theta_0=1.\)

\begin{theorem}\label{th1}
Suppose \(a_{11} \overset{d}{=} -a_{11}, \lim_{x \to \infty}{x^4\mathbb{P}(|a_{11}|>x)}=c \in (0,\infty), \mathbb{E}[a^2_{11}]=1.\) Let \(\theta>0, k_n \in \mathbb{N}, v_n \in \mathbb{S}^{n-1}\) be deterministic with \(v_{n,(1)} \geq v_{n,(2)} \geq ... \geq v_{n,(n)}\) the ordered statistics of \((|v_{n,i}|)_{1 \leq i \leq n},\)
\[\liminf_{n \to \infty}{v_{n,(k_n)}}>0, \hspace{0.5cm} \lim_{n \to \infty}{\sum_{1 \leq i \leq k_n}{v^2_{n,(i)}}}=1,\]
\[P_n(\theta,v_n)=\frac{1}{\sqrt{n}}A+\theta v_n^Tv_n.\]
Then
\[\lambda_1(P_n(\theta,v_n)) \Rightarrow \max{(f(\theta),f(\zeta_c))}.\]
\end{theorem}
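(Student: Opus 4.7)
My approach combines a localization reduction for the spike with a Schur-complement analysis of the secular equation, exploiting that \(v_n\) is eventually supported on \(O(1)\) coordinates while the heavy-tail extremes of \(A/\sqrt n\) lie on random coordinates outside this support with high probability.

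\textbf{Reduction.} Since \(\sum_{i \le k_n} v_{n,(i)}^2 \le 1\) and \(\liminf v_{n,(k_n)}>0\), the sequence \(k_n\) is bounded by some constant \(K\). By passing to a subsequence and permuting coordinates (which preserves the joint law of the Wigner matrix \(A\)), I may assume the top \(K\) entries of \(v_n\) sit on \(\{1,\dots,K\}\) and \(v_n \to (w,0)\) with \(w\in\mathbb R^K\), \(\|w\|=1\); the residual has norm \(o(1)\), contributing only \(o(1)\) to \(\lambda_1\) via the operator norm of the perturbation. Block \(A=\begin{pmatrix}A_{11}&A_{12}\\A_{21}&A_{22}\end{pmatrix}\) with \(A_{11}\in\mathbb R^{K\times K}\) and set \(M=A/\sqrt n\).

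\textbf{Schur complement and secular equation.} The lower bound \(\lambda_1(P_n)\ge\lambda_1(M)\Rightarrow f(\zeta_c)\) follows from rank-1 monotonicity combined with (\ref{oldth}). For the refinement, set \(G_n(\lambda)=(\lambda I-M)^{-1}\); eigenvalues of \(P_n\) strictly above the spectrum of \(M\) satisfy the secular equation \(w^T[G_n(\lambda)]_K w = 1/\theta\), where \([\cdot]_K\) denotes the top-left \(K\times K\) block. The central claim is
\[
[G_n(\lambda)]_K \xrightarrow[]{p} G(\lambda)\, I_K, \qquad G(\lambda)=\frac{\lambda-\sqrt{\lambda^2-4}}{2},
\]
for each \(\lambda>2\). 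Granting this, the secular equation reduces to \(G(\lambda)=1/\theta\); since \(G\) is continuous and strictly decreasing on \((2,\infty)\) from \(1\) to \(0\), a solution \(\lambda>2\) exists iff \(\theta>1\), in which case \(\lambda=f(\theta)\). Combined with \(\lambda_1(M)\to f(\zeta_c)\), this simultaneously delivers the upper bound (no eigenvalue of \(P_n\) exceeds \(\max(f(\theta),f(\zeta_c))+\epsilon\)) and, via an intermediate-value argument when \(\theta>1\), a matching eigenvalue converging to \(f(\theta)\).

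\textbf{Main obstacle.} The delicate technical step is the convergence \([G_n(\lambda)]_K\to G(\lambda)I_K\). Schur complement within \(G_n\) gives
\[
[G_n(\lambda)]_K = \bigl(\lambda I_K - M_{11} - M_{12}(\lambda I - M_{22})^{-1} M_{12}^T\bigr)^{-1};
\]
since \(M_{11}=O_p(n^{-1/2})\) and \(G(\lambda)(\lambda-G(\lambda))=1\), it suffices to show \(M_{12}(\lambda I-M_{22})^{-1}M_{12}^T\to G(\lambda)I_K\). Conditioning on \(M_{22}\) (which is independent of \(A_{12}\) by the Wigner block structure), the \((i,j)\) entry has mean \(\delta_{ij}\cdot\tfrac{1}{n}\mathrm{tr}((\lambda I-M_{22})^{-1})\to G(\lambda)\) by Wigner's theorem (finite second moment suffices; the \(O(1)\) heavy-tail eigenvalues of \(M_{22}\) contribute \(O_p(1/n)\)). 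Concentration is the delicate point since \(\mathbb E[a_{11}^4]=\infty\) at \(\alpha=4\), ruling out the standard Hanson--Wright bound. I would handle this by truncating \(a_{ij}\mapsto a_{ij}\mathbf 1\{|a_{ij}|\le n^{1/2-\epsilon}\}\): the expected number of untruncated entries in \(A_{12}\) is \(O(K\cdot n\cdot n^{-2+4\epsilon})\to 0\) for \(\epsilon<1/4\), so w.h.p.\ truncation is vacuous on \(A_{12}\); Chebyshev on the truncated quadratic form (using \(\mathbb E[\tilde a^4]=O(\log n)\) and \(\mathrm{tr}((\lambda I-M_{22})^{-2})=O(n)\)) yields conditional variance \(O(\log n/n)\to 0\). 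This decoupling is precisely the content of the theorem: the spike is localized on \(\{1,\dots,K\}\) while the heavy-tail outliers of \(A\) occupy random positions elsewhere, so the BBP mechanism (producing \(f(\theta)\)) and the Soshnikov/\cite{oldpaper} mechanism (producing \(f(\zeta_c)\)) are asymptotically independent and the edge eigenvalue is their maximum.
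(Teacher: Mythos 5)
Your route is genuinely different from the paper's. The paper proves Theorem~\ref{th1} entirely by the modified moment method of subsection~\ref{0.2}: it decomposes $P$ into $P_k+P_s+P_m+P_{b,\kappa}+P_{B,\kappa}$, conditions on the positions and values of the heavy entries (the events $E(S,\kappa,M)$, $E^B(S,\kappa,M)$), proves the trace-difference estimates~(\ref{v22})--(\ref{v333}) with the combinatorial function $s_2(\theta,p)$, and extracts $f(\theta)$ from Lemma~\ref{lemma9} via $\lim_p s_2(\theta,p)^{1/p}=f^2(\theta)$. You instead take the classical BBP route: secular equation $\theta\,w^T[G_n(\lambda)]_Kw=1$, Schur complement, and conditional concentration of $M_{12}(\lambda I-M_{22})^{-1}M_{12}^T$ about $G(\lambda)I_K$. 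Your truncation trick to obtain $\mathbb E[\tilde a^4]=O(\log n)$ (and vacuity of the truncation on the $O(n)$ entries of $A_{12}$ w.h.p.) is the right way to defuse the missing fourth moment in the quadratic-form variance, and the bounded-$k_n$ / subsequence reduction is sound. What your route buys is a transparent decoupling of the two outlier mechanisms — the spike lives on $\{1,\dots,K\}$, the heavy tail on $\{K+1,\dots,n\}$ — making the $\max$ structure conceptually immediate. What the paper's route buys is uniformity: the whole argument is run conditionally on the heavy entries, so there is no random spectral threshold floating around, and the same machinery simultaneously handles $\alpha\in(0,4)$, the delocalized $\alpha=4$ case (Theorem~\ref{th3}), and this theorem.

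The real gap in your sketch is the passage from pointwise resolvent convergence to the final distributional statement. You assert that $[G_n(\lambda)]_K\to G(\lambda)I_K$ ``simultaneously delivers the upper bound,'' but the secular equation has to be evaluated at $\lambda_0\approx\max\!\big(f(\theta),\lambda_1(M)\big)+\epsilon$, a \emph{random} point whose law you are trying to determine, and the Schur complement is only valid (and your concentration only works) when $\lambda_0$ is bounded away from $\mathrm{spec}(M_{22})$. To close this you need: (i) a uniform (in $\lambda$, over a compact region at distance $\ge\epsilon$ from $\mathrm{spec}(M_{22})$) version of the concentration — achievable via a net argument plus Lipschitz continuity of the resolvent, but not stated; (ii) the identification $\lambda_1(M)-\lambda_1(M_{22})\xrightarrow{p}0$, so that ``above $\lambda_1(M)+\epsilon$'' implies ``$\epsilon$-away from $\mathrm{spec}(M_{22})$'' — true because the largest entry of $A$ lands in $A_{22}$ w.h.p., but this is an additional lemma; and (iii) the formulation $\lambda_1(P_n)-\max(f(\theta),\lambda_1(M))\xrightarrow{p}0$ followed by Slutsky, rather than a direct appeal to $\lambda_1(M)\Rightarrow f(\zeta_c)$ inside the resolvent analysis. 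None of these looks fatal, but each needs to be proved; they are precisely the points that the paper's conditioning on $E^B(S,\kappa,M)$ renders deterministic and that make the moment-method route shorter to make rigorous in this heavy-tailed setting.
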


\par
Some observations on these last two results are in order. Since \(\lambda_1(\frac{1}{\sqrt{n}}A) \Rightarrow f(\zeta_c),\)
where similarly to the heavy-tailed case, \(\zeta_c\) gives the asymptotic law of \(max(A):=\frac{1}{\sqrt{n}}\max_{1 \leq i \leq j \leq n}{|a_{ij}|},\) the structure of the leading eigenvector of \(\frac{1}{\sqrt{n}}A\) is not as transparent as for \(\alpha \in (0,4).\) At a heuristic level, it cannot be localized because one can think of the case \(max(A) \leq 1\) as falling into the light-tailed regime, in which it is known that, under certain conditions on the underlying distribution, the leading eigenvectors are delocalized (e.g., theorem \(5.1\) in \cite{erdosetal}), whereas when \(max(A)>1,\) some localization should occur due to \(max(A)+(max(A))^{-1} \approx max(A)\) for \(max(A)\) very large, although definitely not as clear-cut as for \(\alpha \in (0,4)\). In virtue of these observations, expecting the localization of \(v_n\) to influence the asymptotic behavior seems reasonable. When \(||v_n||=o(1),\) there is a global contribution from the perturbation encompassed by \(F,\) while for localized \(v_n,\) \(\theta\) influences the limiting law through \(f,\) the function appearing in the unperturbed case. Moreover, it will become apparent in the coming section that the symmetry of \(a_{11}\) is a vestige of the employed methodology rather than a necessary condition.
\par
All three convergences rely on finding asymptotically equal upper and lower bounds. Obtaining the latter is relatively straightforward by the variational characterization of eigenvalues of symmetric matrices \(M,\) whereas for the former,
\[\lambda_1(M) \leq (tr(M^{2p}))^{1/(2p)}\] 
for any \(p \in \mathbb{N}.\) Specifically, upper bounds for \(\mathbb{E}[tr(M^{2p})]\) and Markov's inequality provide such inequalities in probability. For Theorem~\ref{th2}, these two bounds are tight and render the conclusion once a proxy \(\tilde{P}\) for \(P\) is chosen. When \(\alpha=4,\) these quantities are different, the larger being optimal. Hence, a new tool is imperative to produce a tight lower bound, and the proofs employ a modified method of moments, whose original version was introduced by Sinai and Soshnikov in \cite{sinaisosh}. This technique, previously used in \cite{oldpaper}, builds on the following observation: suppose \(S,Q \in \mathbb{R}^{n \times n}\) are symmetric with \(S\) very sparse (say, \(o(n)\) nonzero entries). Then \(Q\) and \(S+Q\) share, roughly speaking, most of their eigenvalues and thus 
\begin{equation}\label{1}
    tr((S+Q)^{p})-tr(Q^p)
\end{equation}
is determined by the extrema of their eigenspectra as their bulks cancel out one another. Because \(tr(M^{p})\) encapsulates the size of \(||M||\) for \(p\) even, and of \(\lambda_1(M)\) relative to \(||M||\) for \(p\) odd, in the former case, if \(||Q||< ||S+Q||,\) then (\ref{1}) is of order \(||S+Q||^p-||Q||^p \approx ||S+Q||^p\) for \(p\) sufficiently large, whereas in the latter, if \(\lambda_1(S+Q)<||S+Q||,||Q||< ||S+Q||,\) then (\ref{1}) is approximately \(-||S+Q||^p-||Q||^p \approx -||S+Q||^p.\) After finding such matrices \(S,Q\) with \(||S+Q-P||=o(1),\) a two-step approach can be used to obtain the asymptotic behavior of \(\lambda_1(P):\) \(p\) even gives the size of \(||P||\) (as long as the variance of (\ref{1}) is small), and subsequently \(p\) odd implies \(\lambda_1(P)\) cannot be smaller than \(||P||\) (otherwise, (\ref{1}) is large in absolute value and negative, which occurs with small probability as its mean will be nonnegative and its variance small). In particular, Theorems~\ref{th2}-\ref{th1} hold for \(||P_n(\theta,v_n)||\) as well. 
\par
For simplicity, drop the dependency on \(n\) and \(\theta:\) \(v=v_n, P=P_n(\theta,v_n),\) and by convention, any constant is positive unless otherwise stated. Section \ref{sect0} presents in more detail the proof strategy behind these results, starting with summary of the proof of theorem \(1\) in \cite{oldpaper}, upon which all the results of this paper are built, and continuing with the changes the perturbed matrices considered impose on this approach. Sections \ref{sect1}-\ref{sect2} contain the justifications of Theorems \ref{th2}-\ref{th1}, respectively.

\section{Trace Moments and Largest Eigenvalues}\label{sect0}

The convergences stated in Theorems~\ref{th2}-\ref{th1} rely on Slutsky's lemma: squeezing \(\lambda_1(P)\) between two random quantities whose laws are asymptotically the same (the crucial random variable is \(\max_{1 \leq i,j \leq n}{|a_{ij}|},\) scaled by \(b_n\) or \(\sqrt{n},\) with limiting distribution \(E_\alpha, \zeta_c,\) respectively), and fine bounds of conditional trace differences such as (\ref{1}) are the key towards obtaining these inequalities. Subsection~\ref{0.1} summarizes the justification of theorem \(1\) in \cite{oldpaper}, and subsection \ref{0.2} presents the changes the current situation requires.

\subsection{Unperturbed Wigner Matrices}\label{0.1}

The proof of \(\lambda_1(\frac{1}{\sqrt{n}}A) \Rightarrow f(\zeta_c),\) under the assumptions of Theorem~\ref{th3}, in \cite{oldpaper} can be outlined as:
\par
\(1.\) For \(\kappa>0\) fixed, decompose \(A\) into four matrices, containing small, medium, big, and very big entries, respectively:
\[\frac{1}{\sqrt{n}} A=A_s+A_m+A_{b,\kappa}+A_{B,\kappa},\]
where for any \(\lim_{n \to \infty}{m_n}=\infty,\) with high probability \(||A_m||=o(1), ||A_{b,\kappa}|| \leq \kappa,\) and \(A_{B,\kappa}\) has at most \(m=m_n\) nonzero entries. 
\par
\(2.\) Justify
\begin{equation}\label{condtrace}
    \mathbb{E}_{*}[tr((A_s+A_{B,\kappa})^{2p})-tr(A_s^{2p})] \leq 2m \cdot s(M,p)+O(n^{-\delta}(M+1)^{2p}C(p)),
\end{equation}
\begin{equation}\label{condvar}
    Var_{*}[tr((A_s+A_{B,\kappa})^{2p})-tr(A_s^{2p})] \leq O(n^{-\delta}(M+1)^pC(p)),
\end{equation}
for \(*\) denoting conditioning on \(max(A):=\frac{1}{\sqrt{n}}\max_{1 \leq i \leq j \leq n}{|a_{ij}|} \leq M\) and an event whose probability tends to one as \(n \to \infty,\) constants \(M,\delta>0,\) an increasing function \(C:\mathbb{N} \to \mathbb{N},\) a family of polynomials \(s_d(X):=s(X,d)\) of degree \(2d, d \in \mathbb{N},\) whose coefficients have a combinatorial definition and with \(\lim_{d \to \infty}{(s(x,d))^{1/{d}}}=f^2(x)\) for \(x>0.\)
\par
\(3.\) Prove an inequality in spirit reverse to (\ref{condtrace}), and an analogue of (\ref{condvar}):
\begin{equation}\label{condexp2}
    \mathbb{E}_{**}[tr((A_s+A_{B,\kappa})^{2p+1})-tr(A_s^{2p+1})]=0,
\end{equation}
\begin{equation}\label{condvar2}
    Var_{**}[tr((A_s+A_{B,\kappa})^{2p+1})-tr(A_s^{2p+1})] \leq O(n^{-\delta}(M+1)^{2p+1}C(p)),
\end{equation}
where \(**\) denotes (another) conditioning on \(max(A):=\frac{1}{\sqrt{n}}\max_{1 \leq i \leq j \leq n}{|a_{ij}|} \leq M,\) and an event whose probability tends to one as \(n \to \infty.\)
\par
These ingredients give the desired result: \(1.\) yields \(||A_s+A_{B,\kappa}||\) as a proxy for \(||A||\) (let \(\kappa \to 0\)); since \(A_{B,\kappa}\) is by design very sparse, \(tr((A_s+A_{B,\kappa})^{2p})-tr(A_s^{2p}) \approx ||A_s+A_{B,\kappa}||^{2p}\) from Lemma~\ref{lemma7old} and \(||A_s||\) being asymptotically not larger than \(||A_s+A_{B,\kappa}||\) (for \(\epsilon>0,\) the former is at most \(2+\epsilon\) by computing \(\mathbb{E}[tr(A_s^{2p})],\) the latter at least \(2-\epsilon\) because it is close to the operator norm of \(A,\) whose empirical spectral distribution converges to the semicircle law), implying together with \(2.\) that \(||A|| \Rightarrow f(\zeta_c)\) (the trace difference in (\ref{condtrace}) is lower bounded by roughly \(s(max(A),p)\) by arguing in the same vein as for the upper bound).

\begin{lemma}[{Lemma \(7,\) \cite{oldpaper}}]\label{lemma7old}
Suppose \(S, Q \in \mathbb{R}^{n \times n}\) are symmetric matrices, \(\lambda_{2m+1}(Q)=0\) for an integer \(m \in [1,\frac{n}{6}-1].\) Then for \(p \in \mathbb{N},\)
\[||S+Q||^{2p}-7m \cdot ||S||^{2p} \leq tr((S+Q)^{2p})-tr(S^{2p}) \leq 4m \cdot ||S+Q||^{2p}.\]
\end{lemma}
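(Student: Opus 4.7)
The plan is to reduce everything to Weyl's inequality for singular values. The hypothesis $\lambda_{2m+1}(Q) = 0$, read with eigenvalues indexed in decreasing order of magnitude, is equivalent to $Q$ having rank at most $2m$, so $S$ and $S+Q$ differ by a low-rank symmetric perturbation. I would write $|\mu|_{(1)} \geq \cdots \geq |\mu|_{(n)}$ and $|\nu|_{(1)} \geq \cdots \geq |\nu|_{(n)}$ for the ordered absolute eigenvalues of $S$ and $S+Q$, so that $|\mu|_{(1)} = ||S||$, $|\nu|_{(1)} = ||S+Q||$, and the two traces split as $tr(S^{2p}) = \sum_i |\mu|_{(i)}^{2p}$ and $tr((S+Q)^{2p}) = \sum_i |\nu|_{(i)}^{2p}$.

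Applying Weyl's inequality $\sigma_{i+j-1}(A+B) \leq \sigma_i(A) + \sigma_j(B)$ with $B = Q$ and with $B = -Q$, each time at $j = 2m+1$ so the second term vanishes, gives the two interlacings
\[
|\nu|_{(i+2m)} \leq |\mu|_{(i)}, \qquad |\mu|_{(i+2m)} \leq |\nu|_{(i)}, \qquad 1 \leq i \leq n - 2m.
\]
These are the only ingredients needed; the rest is bookkeeping.

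For the upper bound, I would split $tr((S+Q)^{2p})$ at index $2m$: the first $2m$ terms are each at most $||S+Q||^{2p}$, while the tail is controlled by the first interlacing,
\[
\sum_{i > 2m} |\nu|_{(i)}^{2p} \leq \sum_{j \leq n - 2m} |\mu|_{(j)}^{2p} \leq tr(S^{2p}),
\]
whence $tr((S+Q)^{2p}) - tr(S^{2p}) \leq 2m \cdot ||S+Q||^{2p}$, which already meets (in fact improves on) the stated constant $4m$.

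For the lower bound I would peel off the top term $|\nu|_{(1)}^{2p} = ||S+Q||^{2p}$, discard the next $2m$ non-negative terms, and apply the second interlacing to the remaining tail,
\[
\sum_{i \geq 2m + 2} |\nu|_{(i)}^{2p} \geq \sum_{j \geq 4m + 2} |\mu|_{(j)}^{2p} \geq tr(S^{2p}) - (4m + 1) \cdot ||S||^{2p},
\]
since the $4m+1$ omitted terms of $tr(S^{2p})$ each contribute at most $||S||^{2p}$. Combining gives $tr((S+Q)^{2p}) - tr(S^{2p}) \geq ||S+Q||^{2p} - (4m+1) ||S||^{2p}$, which implies the claim since $4m + 1 \leq 7m$ for $m \geq 1$. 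The hypothesis $m \leq n/6 - 1$ guarantees $n \geq 4m + 2$, so all index ranges above are non-empty. I do not foresee any substantive obstacle: the task reduces to choosing the right partition of each trace sum so that Weyl's inequality closes the gap, and the loose constants in the statement leave ample room.
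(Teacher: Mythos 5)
Your proof is correct. Two remarks are worth making. First, you read the hypothesis $\lambda_{2m+1}(Q)=0$ as $\mathrm{rank}(Q)\le 2m$, and this is indeed the reading required: under the convention used elsewhere in the paper (eigenvalues in decreasing \emph{signed} order, as in $\lambda_n(A_s+A_{B,\kappa})=-||A_s+A_{B,\kappa}||$ near Lemma~\ref{lemma9old}), the literal hypothesis is strictly weaker and the upper bound fails --- take $S=0$, $m=1$, and $Q$ with eigenvalues $1,1,0,-1,\dots,-1$, so that $\lambda_3(Q)=0$ yet $tr(Q^{2p})=n-1$ is unbounded while $4m\cdot||Q||^{2p}=4$. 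Your reading matches the intended application (where $Q$ is a very sparse symmetric matrix, hence genuinely of rank at most $2m$), and is the one under which the lemma is true, so it is right to make it explicit. Second, granted that reading, the two singular-value Weyl interlacings $|\nu|_{(i+2m)}\le|\mu|_{(i)}$ and $|\mu|_{(i+2m)}\le|\nu|_{(i)}$ are exactly the right tool, and your bookkeeping is clean; in fact you obtain the sharper constants $2m$ and $4m+1$ in place of the stated $4m$ and $7m$, and you need only $n\ge 4m+2$ rather than the stated $m\le n/6-1$. The manuscript quotes Lemma~7 from \cite{oldpaper} without reproducing its proof, so there is no in-paper argument to compare against, but the Weyl-inequality route is the natural one for low-rank symmetric perturbations and the slack in the stated constants is consistent with a somewhat cruder interlacing count in the source.
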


If \(\lambda_1(A_s+A_{B,\kappa})<||A_s+A_{B,\kappa}||-\epsilon,\) then \(tr((A_s+A_{B,\kappa})^{2p+1})-tr(A_s^{2p+1}) \approx -||A_s+A_{B,\kappa}||^{2p+1}\) from Lemma~\ref{lemma9old} (\(\lambda_n(A_s+A_{B,\kappa})=-||A_s+A_{B,\kappa}||\)), which is large in absolute value and negative, whereas \(3.\) entails \(tr((A_s+A_{B,\kappa})^{2p+1})-tr(A_s^{2p+1}),\) modulo an event of small probability, concentrates around \(0\) for \(p \leq c(n).\)

\begin{lemma}[{Lemma \(9,\) \cite{oldpaper}}]\label{lemma9old}
Suppose \(S, Q \in \mathbb{R}^{n \times n}\) are symmetric matrices, \(\lambda_{2m+1}(Q)=0\) for an integer \(m \in [1,\frac{n}{4}-1].\) Then for \(p \in \mathbb{N},\)
\[tr((S+Q)^{2p+1})-tr(S^{2p+1}) \leq 2m \cdot (\lambda_1(S+Q))^{2p+1}+(\lambda_n(S+Q))^{2p+1}+3m \cdot ||S||^{2p+1}.\]
\end{lemma}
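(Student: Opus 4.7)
The plan is to combine Weyl's interlacing inequality for sums of symmetric matrices with the fact that $t \mapsto t^{2p+1}$ is strictly increasing on all of $\mathbb{R}$. From the standard form $\lambda_{i+j-1}(S+Q) \leq \lambda_i(S) + \lambda_j(Q)$ applied with $j = 2m+1$, the hypothesis $\lambda_{2m+1}(Q) = 0$ yields $\lambda_k(S+Q) \leq \lambda_{k-2m}(S)$ for every $k \in [2m+1, n]$. Since odd powers preserve order regardless of sign, this immediately upgrades to $\lambda_k(S+Q)^{2p+1} \leq \lambda_{k-2m}(S)^{2p+1}$ in the same range. This is the key simplification relative to the even-power statement (Lemma~\ref{lemma7old}): there the loss of sign information under even powers forces a separate treatment of the bottom of the spectrum, whereas here a single Weyl-monotonicity chain covers most of the sum.

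Next I would split $tr((S+Q)^{2p+1}) = \sum_{i=1}^{n} \lambda_i(S+Q)^{2p+1}$ into three pieces according to the index ranges $[1, 2m]$, $[2m+1, n-1]$, and $\{n\}$. For the top $2m$ indices, monotonicity applied to $\lambda_i(S+Q) \leq \lambda_1(S+Q)$ yields $\lambda_i(S+Q)^{2p+1} \leq \lambda_1(S+Q)^{2p+1}$, so the first block contributes at most $2m \cdot \lambda_1(S+Q)^{2p+1}$; for the middle, the Weyl-monotonicity chain gives $\sum_{i=2m+1}^{n-1} \lambda_i(S+Q)^{2p+1} \leq \sum_{j=1}^{n-2m-1} \lambda_j(S)^{2p+1}$ after reindexing $j = i - 2m$; and the bottom index stands alone as $\lambda_n(S+Q)^{2p+1}$.

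Subtracting $tr(S^{2p+1}) = \sum_{j=1}^{n} \lambda_j(S)^{2p+1}$ absorbs the middle-block contribution and leaves a tail of $2m+1$ unmatched $S$-eigenvalues, namely $-\sum_{j=n-2m}^{n} \lambda_j(S)^{2p+1}$, bounded above by $(2m+1) \cdot ||S||^{2p+1}$ via $\lambda_j(S)^{2p+1} \geq -||S||^{2p+1}$; the elementary estimate $2m+1 \leq 3m$ for $m \geq 1$ absorbs this into the stated $3m \cdot ||S||^{2p+1}$ term. Collecting the three pieces completes the proof. There is no genuine obstacle here: the whole argument rests on the observation that monotonicity of odd powers transports Weyl's inequality directly onto trace moments, and the remainder is bookkeeping; the restriction $m \leq n/4 - 1$ is comfortably more than what the decomposition requires.
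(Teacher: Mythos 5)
Your proof is correct and follows what is essentially the only natural route: Weyl's interlacing inequality $\lambda_{k}(S+Q)\leq\lambda_{k-2m}(S)$ for $k\geq 2m+1$, combined with the fact that $t\mapsto t^{2p+1}$ is monotone on all of $\mathbb{R}$, then direct index bookkeeping that leaves $2m$ top terms bounded by $\lambda_1(S+Q)^{2p+1}$, the bottom term $\lambda_n(S+Q)^{2p+1}$ set aside, and $2m+1\leq 3m$ leftover $S$-eigenvalues bounded by $\|S\|^{2p+1}$. This matches the approach behind Lemma~\ref{lemma7old}/\ref{lemma9old} in \cite{oldpaper}, and your observation that the odd-power case is cleaner than the even one (no sign-splitting needed) is accurate.
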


\par
The conditional expectation (\ref{condtrace}) is computed using a modified version of the combinatorial technique pioneered by Sinai and Soshnikov in \cite{sinaisosh} (employed in Sinai and Soshnikov~\cite{sinaisosh2}, Soshnikov~\cite{sohnikov2}, Auffinger et al.~\cite{auffinger}, etc.). This counting device has been initially used to bound expectations of traces of large powers of random matrices, which in the heavy-tailed regime can be stated as: 

\vspace{0.2cm}
Suppose \(p \in \mathbb{N},\) \(B=(b_{ij})_{1 \leq i,j \leq n} \in \mathbb{R}^{n \times n}\) is a symmetric random matrix for which \((b_{ij})_{1 \leq i \leq j \leq n}\) are i.i.d., with symmetric distributions, \(\mathbb{E}[b_{11}^2] \leq 1, \mathbb{E}[b_{11}^{2l}] \leq L(n)n^{\delta(2l-4)}, 2 \leq l \leq p, \delta>0, L:\mathbb{N} \to [1,\infty), L(n) \leq n^{2\delta}.\) Then
\begin{equation}\label{trace}
    \mathbb{E}[tr(B^{2p})] \leq L(n)2^{2p}p!\sum_{(n_1,\hspace{0.05cm} ...\hspace{0.05cm} ,n_p)}{n^{1+\sum_{1 \leq k \leq p}{n_k}+2\delta\sum_{k \geq 2}{kn_k}}\prod_{1 \leq k \leq p}{\frac{1}{(k!)^{n_k}n_k!}}\prod_{2 \leq k \leq p}{(2k)^{kn_k}}}
\end{equation}
where \(n_1,\hspace{0.05cm} ...\hspace{0.05cm} ,n_p \in \mathbb{Z}_{\geq 0}, \sum_{1 \leq k \leq p}{kn_k}=p.\) This entails for \(p=o(n^{1/2-2\delta}),\)
\begin{equation}\label{traceineq}
    \mathbb{E}[tr(B^{2p})] \leq 2^{2p}L(n)n^{p+1}e^8.
\end{equation}
\vspace{0.2cm}
\par
The key towards (\ref{trace}) is a change of summation in
\[\mathbb{E}[tr(B^{2p})]=\sum_{(i_0, i_1, \hspace{0.05cm} ... \hspace{0.05cm}, i_{2p-1})}{\mathbb{E}[b_{i_0i_1}b_{i_1i_2}...b_{i_{2p-1}i_{0}}]}:=\sum_{\mathbf{i}=(i_0, i_1, \hspace{0.05cm} ... \hspace{0.05cm}, i_{2p-1},i_0)}{\mathbb{E}[b_{\mathbf{i}}]},\]
from cycles \(\mathbf{i}:=(i_0,i_1, \hspace{0.05cm} ...\hspace{0.05cm} ,i_{2p-1},i_0)\) to tuples \((n_1,\hspace{0.05cm} ...\hspace{0.05cm} ,n_p),\sum_{1 \leq k \leq p}{kn_k}=p.\) 
Before sketching how this is accomplished (subsection \(2.1\) in \cite{oldpaper} contains their full justification), let us introduce some needed terminology and notation. 
\par
Interpret \(\mathbf{i}=(i_0,i_1, \hspace{0.05cm} ...\hspace{0.05cm} ,i_{q-1},i_0)\) as a directed cycle of length \(q,\) with vertices among \(\{1,2, \hspace{0.05cm} ... \hspace{0.05cm}, n\};\) call \((i_{k-1},i_k)\) (\(i_{k-1}\)) its \(k^{th}\) edge (vertex) for \(1 \leq k \leq q,\) where \(i_{q}:=i_0;\) for \(u,v \in \{1,2, \hspace{0.05cm} ... \hspace{0.05cm}, n\},\) \((u,v)\) denotes a directed edge from \(u\) to \(v,\) whereas \(uv=vu\) is undirected (the former are the building blocks of the cycles underlying the trace in (\ref{trace}), while the latter determine their expectations). Call \(\mathbf{i}\) an \textit{even cycle} if each undirected edge appears an even number of times in it, an edge \((i_k,i_{k+1})\) and its right endpoint \(i_{k+1}\) \textit{marked} if an even number of copies of \(i_ki_{k+1}\) precedes it: i.e., if \(\{t \in \mathbb{Z}: 0 \leq t \leq k-1, i_ti_{t+1}=i_ki_{k+1}\}\) has even size; lastly, pair each unmarked edge \((i_k,i_{k+1})\) with its last marked copy (couple it with \((i_{t'},i_{t'+1}),\) where \(t'=\max{\{t \in \mathbb{Z}: 0 \leq t \leq k-1, i_ti_{t+1}=i_ki_{k+1}\}}\)). The analysis of these directed graphs, and consequently of (\ref{trace}), relies on this pairing, and on \(\mathbb{E}[b_{\mathbf{i}}]=0\) unless \(\mathbf{i}\) is even (due to symmetry).
\par
Given the introduced definitions, even cycles are the sole contributors to (\ref{trace}), and each such \(\mathbf{i}\) has \(p\) marked edges, with any vertex \(j \in \{1,2, \hspace{0.05cm} ... \hspace{0.05cm}, n\}\) in it, apart perhaps from \(i_0,\) marked at least once (the first edge of \(\mathbf{i}\) containing \(j\) is of the form \((i,j)\) since \(i_0 \ne j,\) and no earlier edge is adjacent to \(j\)). For \(0 \leq k \leq p,\) denote by \(N_{\mathbf{i}}(k)\) the set of \(j \in \{1,2, \hspace{0.05cm} ... \hspace{0.05cm}, n\}\) marked exactly \(k\) times in \(\mathbf{i}\) with \(n_k:=|N_{\mathbf{i}}(k)|.\) Then
\begin{equation}\label{tuplecond}
    \sum_{0 \leq k \leq p}{n_k}=n, \hspace{0.2cm} \sum_{1 \leq k \leq p}{kn_k}=p,
\end{equation}
providing a change of summation for the left-hand side of (\ref{trace}). With this construction under the belt, the remaining task
is obtaining upper bounds for the number of such cycles taken to a given tuple (steps \(1-4\)) and their individual contributions (step \(5\)). In what follows, \((n_1,n_2, \hspace{0.05cm} ... \hspace{0.05cm}, n_p)\) stays fixed, and \(\mathbf{i}\) is an even cycle mapped to it by the aforesaid procedure.
\par
\underline{Step \(1.\)} Map \(\mathbf{i}\) to a Dyck path \((s_1,s_2, \hspace{0.05cm} ... \hspace{0.05cm} ,s_{2p}),\) where \(s_k=+1\) if \((i_{k-1},i_k)\) is marked, and \(s_k=-1\) if \((i_{k-1},i_k)\) is unmarked.
\par
\underline{Step \(2.\)} Once the positions of the marked edges in \(\mathbf{i}\) are chosen (i.e., a Dyck path), establish the order of their marked vertices (out of \(p\) pairwise disjoint sets \((N_{\mathbf{i}}(k))_{1 \leq k \leq p}\) with \(|N_{\mathbf{i}}(k))|=n_k\)). 
\par
\underline{Step \(3.\)} Select the vertices of \(\mathbf{i},\)
\[V(\mathbf{i}):=\cup_{0 \leq k \leq 2p-1}{\{i_k\}},\] 
one at a time from \(\{1,2, \hspace{0.05cm} ... \hspace{0.05cm} ,n\}\), by reading its edges in order, starting at \((i_0,i_1)\) (\(|V(\mathbf{i})| \leq 1+\sum_{1 \leq k \leq p}{n_k}\) in virtue of an earlier observation).  
\par
\underline{Step \(4.\)} Choose the remaining vertices of \(\mathbf{i}\) from \(V(\mathbf{i}),\) by traversing anew its edges in increasing order (step \(3\) only established the first appearance of the elements of \(V(\mathbf{i})\) in \(\mathbf{i}\)).
\par
\underline{Step \(5.\)} Bound the expectation generated by such cycles \(\mathbf{i}.\) 

\vspace{0.3cm}
An altered version of this technique allows computing
\begin{equation}\label{tracediff}
    \mathbb{E}_{(*)}[tr((A_s+A_{B,\kappa})^{p})-tr(A_s^{p})],
\end{equation}
where \((*)\) denotes a conditioning of interest. Since \(tr((A_s+A_{B,\kappa})^{p})\) can be written as a sum over cycles of length \(p,\) whose edge contributions come either from the corresponding entry in \(A_s\) or \(A_{B,\kappa}\) (in such situations, say \(a_{ij}\) \textit{belongs to} \(A_s, A_{B,\kappa},\) respectively), the trace difference is a summation over cycles which contain at least one entry belonging to \(A_{B,\kappa}.\) An important family in this regard is \((\mathcal{C}(l))_{l \in \mathbb{N}},\) where \(\mathcal{C}(l)\) denotes the set of pairwise non-isomorphic even cycles of length \(2l,\) with \(n_1=l\) and the first vertex unmarked (call two cycles \(\mathbf{i},\mathbf{j}\) of length \(q\) \textit{isomorphic} if \(i_s=i_t \Longleftrightarrow j_s=j_t\) for all \(0 \leq s,t \leq q\)), and \(b_{l,t}\) the number of vertices \(v \in V(\mathbf{i}), \mathbf{i} \in \mathcal{C}(l)\) of multiplicity \(t:\) i.e., \(|\{0 \leq j \leq 2l, i_j=v\}|=t.\) A close look at the proof of (\ref{trace}) reveals the elements of \(\mathcal{C}(l)\) dominate the trace, the rest producing contributions of smaller order (this occurs primarily because for \(\mathbf{i} \not \in \mathcal{C}(l),\) \(|V(\mathbf{i})|<l+1;\) if the moment bounds in (\ref{trace}) were independent of \(n,\) then these vertices would generate the leading factor in such products: \(n(n-1)...(n-|V(\mathbf{i})|+1) \approx n^{|V(\mathbf{i})|},\) while the rest would be controlled by \(l;\) something similar to this occurs also under the assumptions behind (\ref{trace})). In (\ref{tracediff}), due to the large nonzero entries of \(A_{B,\kappa},\) there are more cycles making a non-negligible contribution than \(\mathcal{C}(p):\) nevertheless, they can still be described using this family (see Lemma~\ref{lemma3old}), and the sequence \((b_{l,t})_{l,t \in \mathbb{N}}\) is crucial when counting them.
\par
The strategy for (\ref{tracediff}) relies anew on a change of summation, which now has fundamentally one additional parameter: the edges belonging to \(A_{B,\kappa}.\) For \(\mathbf{i},\) let \(\mathbf{i}',\mathbf{i}''\) be the strings of \(2p\) elements such that for \(0 \leq t \leq 2p-1,\) if \(a_{i_t i_{t+1}}\) belongs to \(A_s,\) then \(\mathbf{i}'_t=(i_t,i_{t+1}),\mathbf{i}''_t=\emptyset;\) else,  \(\mathbf{i}'_t=\emptyset, \mathbf{i}''_t=(i_t,i_{t+1}),\) and adopt \(\mathbf{i}=(\mathbf{i}',\mathbf{i}'')\) as a shorthand for this decomposition. Put differently, \(\mathbf{i}',\mathbf{i}''\) record the edges of \(\mathbf{i}\) belonging to \(A_s\) and \(A_{B,\kappa},\) respectively: by ignoring the empty set entries in these sequences, they can be naturally seen as subgraphs of \(\mathbf{i},\) an interpretation implicitly assumed henceforth. Steps \(1'-5'\) below consider the contributions of cycles \(\mathbf{i}\) for \(\mathbf{i}''\) fixed, while step \(6'\) sums them over all such directed graphs.
\par
Regarding the conditionings of interest, \(*\) fixes the positions of the nonzero entries of \(A_{B,\kappa},\) and \(**\) additionally sets their values. By independence, the conditional moments of the entries belonging to \(A_s\) can be replaced by their unconditional counterparts at the cost of a factor \(c(\kappa,c):\) for any nonnegative integers \((p_{ij})_{1 \leq i \leq j \leq n},\)
\[\mathbb{E}_*[\prod_{1 \leq i \leq j \leq n}{a^{p_{ij}}_{ij}}]=\prod_{1 \leq i \leq j \leq n, (i,j) \in S}{\mathbb{E}[a^{p_{ij}}_{ij} \hspace{0.05cm}| \hspace{0.05cm} \kappa \sqrt{n}<|a_{ij}| \leq M \sqrt{n}]} \cdot \prod_{1 \leq i \leq j \leq n, (i,j) \not \in S}{\mathbb{E}[a^{p_{ij}}_{ij}\chi_{|a_{ij}| \leq n^{\delta}} \hspace{0.05cm}| \hspace{0.05cm} |a_{ij}| \leq \kappa \sqrt{n}]}.\]
By symmetry, if there is some \(p_{ij}\) odd, then the expectation is zero; else,
\begin{equation}\label{sub}
    \mathbb{E}_*[\prod_{1 \leq i \leq j \leq n}{a^{p_{ij}}_{ij}}] \leq c(\kappa,c)M^{\sum_{(i,j) \in S}{p_{ij}}} \cdot  \prod_{1 \leq i \leq j \leq n, (i,j) \not \in S}{\mathbb{E}[a^{p_{ij}}_{ij}\chi_{|a_{ij}| \leq n^{\delta}}]}.
\end{equation}
since \(\mathbb{P}(|a_{11}| \leq \kappa \sqrt{n}) \geq 1-2c(\kappa \sqrt{n})^{-4}\) and \(|S| \leq m.\) This and Lemma \(8\) in \cite{oldpaper} (if a cycle is not even and contains at least an edge belonging to \(A_s,\) then it has one of odd multiplicity) imply only even cycles matter, an observation essential in the analysis to come.
\par
Proceed now with the first five steps: since \(\mathbf{i}''\) is fixed at this stage (denote its length by \(2l\)), the summation is over \(\mathbf{i}'\) with \(\mathbf{i}=(\mathbf{i}',\mathbf{i}''),\) and as in the classical case, the goal is switching from \(\mathbf{i}'\) to a tuple \((n'_1,n'_2,\hspace{0.05cm} ...\hspace{0.05cm} ,n'_{p-l}), \sum_{1 \leq k \leq p-l}{kn'_k}=p-l.\) For an even cycle \(\mathbf{i},\) let \(N'_{\mathbf{i}}(k)\) be the set of vertices of \(\mathbf{i}\) appearing as right endpoints of marked edges of \(\mathbf{i}'\) exactly \(k\) times, and \(n'_k:=|N'_{\mathbf{i}}(k)|\) for \(1 \leq k \leq p-l\) (since \(\mathbf{i}'\) and \(\mathbf{i}''\) share no undirected edge, marking them either separately or jointly in \(\mathbf{i}\) leads to the same configuration). In what follows, \(\mathbf{i}=(\mathbf{i}',\mathbf{i}'')\) is even with \((n'_1,n'_2,\hspace{0.05cm} ...\hspace{0.05cm} ,n'_{p-l})\) fixed, and \(1 \leq l \leq p-1.\) Although steps \(1-5\) do not generally hold when \((n_1,n_2,\hspace{0.05cm} ...\hspace{0.05cm} ,n_p)\) is replaced by \((n'_1,n'_2,\hspace{0.05cm} ...\hspace{0.05cm} ,n'_{p-l}),\) they can be modified and still yield useful bounds.
\par
\underline{Step \(1'.\)} Map the edges of \(\mathbf{i}'\) to a Dyck path of length \(2p-2l.\)
\par
\underline{Step \(2'.\)} Establish the order of the marked vertices in \(\mathbf{i}'\) (out of \(p-l\) pairwise disjoint sets \((N'_{\mathbf{i}}(k))_{1 \leq k \leq p-l}\) with \(|N'_{\mathbf{i}}(k))|=n'_k.\))
\par
\underline{Step \(3'.\)} Select the vertices of \(\mathbf{i}',\) \(V(\mathbf{i}'),\) one at a time by reading its edges in order: each vertex of \(\mathbf{i}'\) is \(i_0,\) marked at least once, or adjacent to an edge in \(\mathbf{i}''.\)
\par
\underline{Step \(4'.\)} Choose the remaining vertices of \(\mathbf{i}'\) among the ones selected in step \(3'.\) 
\par
\underline{Step \(5'.\)} Bound the expectation generated by \(\mathbf{i'}:\) 
\[\mathbb{E}[a_{\mathbf{i}'}] \leq L(n)^{(p-l)/2}n^{2\delta\sum_{k \geq 2}{kn'_k}}.\]
Furthermore, an overall saving of a power of \(n\) is possible unless \(\mathbf{i}\) has a very special form:

\begin{lemma}[{Lemma \(3,\) \cite{oldpaper}}]\label{lemma3old}
For an even cycle \(\mathbf{i}\) with \(l<p,\) at least one of the following occurs:
\par
\((I)\) a factor of \(n^{2\delta}\) can be saved in step \(5':\)
\[\mathbb{E}[a_{\mathbf{i}'}] \leq L(n)^{(p-l)/2}n^{2\delta(\sum_{k \geq 2}{kn'_k}-1)},\]
\par
\((II)\) a factor of \(n/(2m)\) can be dropped in step \(3',\)
\par
\((III)\) \(n'_1=p-l,\) \(i_0\) is unmarked in \(\mathbf{i}',\) \(\mathbf{i''}\) contains a unique undirected edge \(vw\) with \(v \in N'_{\mathbf{i}}(1) \cup \{i_0\}\) and \(w \ne i_0\) unmarked in \(\mathbf{i}'.\) 
\end{lemma}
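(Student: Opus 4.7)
The strategy is to prove the contrapositive: assuming that neither saving (I) in step $5'$ nor saving (II) in step $3'$ is available, I would deduce the rigid structural conclusion of (III). The two sources of slack in the analysis---the truncated-moment exponent in step $5'$ and the $2m$-versus-$n$ vertex-counting in step $3'$---are nearly independent, so killing both simultaneously should force a codimension-high configuration.

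First I would examine the failure of (I). The estimate $\mathbb{E}[a_{\mathbf{i}'}] \leq L(n)^{(p-l)/2} n^{2\delta \sum_{k \geq 2} k n'_k}$ from step $5'$ comes from the truncated moment bound $\mathbb{E}[a_{11}^{2k}\chi_{|a_{11}|\leq n^{\delta}}] \leq L(n) n^{2\delta(k-2)}$. Whenever some marked vertex of $\mathbf{i}'$ has multiplicity at least two, one may pair off its incident copies and gain an extra factor of $n^{-2\delta}$, which is exactly case (I). The only way to block this gain is to have $n'_k = 0$ for every $k \geq 2$, so $n'_1 = p - l$ and every marked vertex of $\mathbf{i}'$ is visited exactly once. (One must check that pathological cases where the multiply-marked vertex is forced by $\mathbf{i}''$ adjacency also produce a saving, else they push one into case (II).)

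Next I would analyze the failure of (II). In step $3'$ the vertices of $\mathbf{i}'$ are revealed by traversing its edges in order; each new vertex is either $i_0$, the right endpoint of the first marked copy of an edge in $\mathbf{i}'$ (at most $p-l$ such vertices by the previous paragraph), or a vertex adjacent to an $\mathbf{i}''$-edge, the last kind contributing a factor $\leq 2m$ rather than $\leq n$. Consequently, whenever $\mathbf{i}''$ carries two or more distinct undirected edges, or its unique edge has an endpoint lying in $V(\mathbf{i}') \setminus (N'_{\mathbf{i}}(1) \cup \{i_0\})$, or $i_0$ itself is marked in $\mathbf{i}'$, a new vertex among the first two types can be reassigned into the $\mathbf{i}''$-adjacent pool, producing the additional factor $n/(2m)$ of (II). Ruling out every such configuration forces $\mathbf{i}''$ to consist of exactly one undirected edge $vw$ whose endpoints split as $v \in N'_{\mathbf{i}}(1) \cup \{i_0\}$ and $w$ neither equal to $i_0$ nor marked in $\mathbf{i}'$, while $i_0$ itself is unmarked in $\mathbf{i}'$.

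Combining the two paragraphs yields exactly (III). I expect the main obstacle to lie in the bookkeeping of where the savings come from: the same vertex can in principle be ``blamed'' for either (I) or (II), and one must avoid double-counting. The cleanest fix is to maintain a single global deficit counter across both steps and show that this counter drops by at least one whenever any of the structural constraints of (III) fails; once this is established, a direct inspection of the Dyck-path data produced in step $1'$ confirms that in the configuration (III) no further saving is available, closing the trichotomy.
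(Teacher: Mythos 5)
The paper does not prove this lemma; it only recalls it from \cite{oldpaper} (where it is Lemma~3), so there is no in-paper proof against which to measure your argument. Judging your proposal against the framework that the paper does lay out (the bound in step~$5'$, the inequality~(\ref{eq14}) used in the proof of Lemma~\ref{lemma20}, and the three vertex types in step~$3'$), the high-level strategy---show that if neither~(I) nor~(II) produces a saving then the rigid shape of~(III) is forced---is the right one, but the two halves of your contrapositive are not actually independent and your sufficient conditions for~(II) do not survive inspection.

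On the~(I) side, "pairing off incident copies of a multiply-marked vertex" is not the mechanism behind step~$5'$. That bound is derived from the edge-multiplicity inequality~(\ref{eq14}), $\sum_{e,k(e)\geq 2}k(e)\leq E+1+\sum_{k\geq 2}kn_k$, and the saving is governed by $E$ (the number of undirected edges of multiplicity $\geq 4$) and by whether~(\ref{eq14}) is strict, not directly by vertex multiplicities. Your parenthetical worry about "pathological cases \dots forced by $\mathbf{i}''$ adjacency" is exactly where the real work is: when $E=1$ and~(\ref{eq14}) is tight, the saving does not come from step~$5'$ at all, it comes from $i_0$ being marked (this is precisely what the proof of Lemma~\ref{lemma20} establishes in the unperturbed setting), which lands one in case~(II), not a reduction to $n'_1=p-l$ by itself. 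Your paragraph~1 conflates "no moment saving" with "$n'_1=p-l$", which is a consequence only after the $E$-based case analysis is carried out.

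On the~(II) side, your list $A\lor B\lor C\Rightarrow$~(II) is misformulated in a way that contradicts the conclusion you want. Condition~$B$ asserts that if the unique $\mathbf{i}''$-edge has an endpoint in $V(\mathbf{i}')\setminus(N'_{\mathbf{i}}(1)\cup\{i_0\})$ then~(II) holds. But case~(III) explicitly permits $w\neq i_0$ unmarked, i.e.\ $w\in V(\mathbf{i}')\setminus(N'_{\mathbf{i}}(1)\cup\{i_0\})$ whenever $w$ is a vertex of $\mathbf{i}'$. Moreover the negation of~$B$ yields only that both endpoints lie in $(N'_{\mathbf{i}}(1)\cup\{i_0\})\cup V(\mathbf{i}')^c$, which does not produce the asymmetric split ($v$ marked-or-$i_0$, $w$ neither) that~(III) requires; the step "ruling out every such configuration forces~(III)" is a non sequitur as written. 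A correct proof has to track the interaction between the moment bookkeeping of step~$5'$ and the vertex bookkeeping of step~$3'$ simultaneously (your closing remark about a "global deficit counter" gestures at this, but stays at the level of intention), and it has to explain concretely when a marked vertex coinciding with an $\mathbf{i}''$-endpoint yields a genuine factor of $n/(2m)$ versus when that coincidence is forced and produces no saving, which is what distinguishes~(II) from~(III).
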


\underline{Step \(6'.\)} Take into account the conditional expectation coming from \(\mathbf{i}'':\) the overall contribution of cycles of types \((I)\) and \((II)\) is at most
\begin{equation}\label{iandii}
    c(\kappa,c) \cdot 4m n^{-\delta}e^{16}\sum_{1 \leq l \leq p-1}{\binom{2p}{2l}(2m)^{2l}} \cdot 2^{2p-2l}((2l+2)!)^{4l}M^{2l},
\end{equation}
and cycles of type \((III)\) give
\begin{equation}\label{spm}
    2m\sum_{1 \leq l \leq p-1}{M^{2l}\sum_{1 \leq t \leq p-l+1, 0 \leq l_0 \leq \min{(\frac{t}{2},l)}}{\binom{l-l_0+t-1}{l-l_0}\binom{t}{2l_0}}b_{p-l,t}}:=2m \cdot (s(p,M)-M^{2p}),
\end{equation}
yielding (\ref{condtrace}) (\(l=p\) generates the extra term \(M^{2p}:\) such \(\mathbf{i}''\) are fully determined by their first edge because by the conditioning, the very sparse \(A_{B,\kappa}\) has no row with two nonzero entries), with this last bound tight, up to polynomial factors in \(m,p.\)
\par
The next driving force behind these conditional expectations is the size of \(b_{l,t},\) encapsulated by inequality \((24)\) in \cite{oldpaper}:  
\begin{equation}\label{bsizes}
    \frac{1}{4l} \cdot \binom{2l+1-t}{l} \leq b_{l,t} \leq  (l+1)^{120} \cdot \binom{2l+1-t}{l}
\end{equation}
for \(1 \leq t \leq l+1.\) Using these binomial proxies for \(b_{l,t}\) (any polynomial factor becomes negligible when \(p \to \infty\)), the limiting behavior of \(s(p,M)\) can be fully determined: Stirling's formula entails
\begin{equation}\label{stirling}
    C_1\sqrt{n} \cdot (\frac{n}{e})^n \leq n! \leq C_2\sqrt{n} \cdot (\frac{n}{e})^n
\end{equation}
for universal constants \(C_1,C_2>0\) and all \(n \in \mathbb{N},\) whereby
\[\lim_{p \to \infty}{(s(p,M))^{1/p}}=\sup{[M^{2x} \cdot \frac{(x-y+z)^{x-y+z}}{(x-y)^{x-y}(2y)^{2y}(z-2y)^{z-2y}} \cdot \frac{(2-2x-z)^{2-2x-z}}{(1-x)^{1-x}(1-x-z)^{1-x-z}}]}=f^2(M)\]
over \(0 \leq x \leq 1, 0 \leq z \leq 1-x, 0 \leq y \leq \min{(\frac{z}{2},x)}\) (take \(l=px,l_0=py,t=pz:\) (\ref{spm}) gives the size of \(s(p,M),\) up to polynomial factors in \(p,\) which are negligible when taking the \(p^{th}\) root). The supremum of the continuous function in the middle can be easily obtained since it is \(C^1\) in the interior of its domain (lemma \(6\) in \cite{oldpaper}; more suprema of this form are computed below: see Lemmas~\ref{lemma79} and \ref{lemma9}).
\par
The last missing pieces are the variances of such trace differences, (\ref{condvar}) and (\ref{condvar2}): once again even cycles play a central role by reasoning in the same vein as Sinai and Soshnikov~\cite{sinaisosh} do when analyzing the variance of moments of the trace of large powers of Wigner matrices (see subsection \(3.2\) in \cite{oldpaper}). Clearly,
\[Var_{**}[(tr((A_{B,\kappa}+A_s)^{2p})-tr(A^{2p}_s)]=\sum_{(\mathbf{i},\mathbf{j})}{(\mathbb{E}_{**}[a_{\mathbf{i}} \cdot a_{\mathbf{j}}]-\mathbb{E}_{**}[a_{\mathbf{i}}] \cdot \mathbb{E}_{**}[a_{\mathbf{j}}])},\]
where \(\mathbf{i},\mathbf{j}\) are cycles of length \(2p\) containing at least one edge belonging to \(A_{B,\kappa}.\) By independence, the contribution of \((\mathbf{i},\mathbf{j})\) is nonzero only if \(\mathbf{i}\) and \(\mathbf{j}\) share at least one undirected edge belonging to \(A_{s},\) and every undirected edge in their union \(\mathbf{i} \cup \mathbf{j}\) appears an even number of times. 
\par
A crucial step in \cite{sinaisosh} is mapping such pairs \((\mathbf{i},\mathbf{j})\) to even cycles \(\mathcal{P}\) of length \(2 \cdot 2p-2=4p-2:\) this is achieved by merging these two cycles along a common edge \(e,\) which then gets erased (\(\mathcal{P}\) traverses \(\mathbf{i}\) up to this shared edge \(e\), which is used as a bridge to switch to \(\mathbf{j},\) traverse all of it, and get back to the rest of \(\mathbf{i}\) upon returning to \(e\)). Evidently, \(\mathcal{P}\) is an even cycle of length \(2 \cdot 2p-2=4p-2,\) and deleting the shared edge \(e\) can be done at the cost of at most a factor of \(n^{2\delta},\) generating some decay when divided by \(n\) (by design, \(\delta<1/2\)), and a bound similar to (\ref{condtrace}) completes the proof. This gluing will also be employed for Theorems~\ref{th3} and \ref{th1}.

\subsection{Traces with Lacunary Cycles}\label{0.2}

As mentioned previously, the proofs of the results rely on the technique described in subsection \ref{0.1}, whose gist consists of finding \(S,Q\) with \(||S+Q-P||\) small, \(S\) sparse, and \(||Q|| \leq ||S+Q||.\) In our case, the same truncation is employed (i.e., based on the sizes of the entries of \(A\)), and the trace difference of interest becomes 
\[\mathbb{E}_{(*)}[tr((P_s+P_{B,\kappa})^{p})-tr(P_s^{p})].\]
\par
If \(||v||_{\infty}=o(1),\) then the entries of \(P_{B,\kappa}\) do not differ much from those of \(A_{B,\kappa}.\) In contrast, when \(v\) has some entries bounded away from zero, \(P_{B,\kappa}\) has some additional nonzero ones besides those from \(A_{B,\kappa}\) (in \((P_{B,\kappa})_{ij},\) where \(v_i \ne 0, v_j \ne 0,\) the random component is dominated by \(\theta v_i v_j\) with high probability). By freezing them in the conditioning, one is left with segments of a cycle whose entries are elements of \(P_s.\)
\par
Suppose next one wished to compute \(\mathbb{E}[tr(P_s^p)]:\) to do so, write each matrix entry as a sum of two elements, open the parentheses of these products, and obtain thus a change of summation, over cycles of length \(p\) in which each factor is either an entry of \(A_s\) or \(\theta v_iv_j\chi_{|a_{ij}| \leq n^{1/4-\delta_1}}\) for some \(i,j.\) Call the latter edges \textit{erased}: because such random variables are easy to control when the rest of the cycle is fixed, the summation becomes straightforward: up to the probabilities coming from the indicator functions, their overall contributions are powers of \(\theta\) times \(\prod{(v_{l_i}v_{r_i})},\) with the product taken over the (left and right) endpoints of the erased clusters. Returning to the entries belonging to \(A_s,\) symmetry yields their contribution vanishes unless each undirected edge has even multiplicity, which implies they can be arranged in an even cycle. This observation and fine bounds on \(\mathbb{E}[tr(A_s^{2p})]\) allow then to obtain the exact size of the trace (up to negligible polynomial factors when taking the \(p^{th}\) root). 
\par
In the case \(\alpha=4,\) a careful control of the difference above is needed (when \(\alpha \in (0,4),\) this difference can be avoided and replaced by \(tr((P_s+P_{B,\kappa})^{2p})\) as solely an upper bound is needed, the lower bound being already tight) because, as it can be seen in the unperturbed case, there are non-negligible components (i.e., the cycles of type \((III)\)). In this situation, the same occurs, with even more such contributors (the moments of the entries can be larger due to the perturbation), and their counting depends on the even powers of the entries of \(v.\) If \(||v||_{\infty}=o(1),\) then \(\sum_{1 \leq i \leq n}{v_i^{2p}} \leq ||v||_{\infty}^{2p-2},\) rendering the new cycles not difficult to track (if among the endpoints, one has multiplicity at least \(4,\) then it generates some decay), with the dominating ones producing approximately \((F(\theta))^{2p}.\) In contrast, the entries of \(v\) bounded away from zero create large entries in \(P_{B,\kappa},\) which in turn generate a class of cycles making an overall contribution of roughly \((f(\theta))^{2p}.\)

\section{Heavy-Tailed Distributions with \(\alpha\in(0,4)\)}\label{sect1}

Fix \(\delta \in (0,\frac{1}{2\alpha}),\kappa>0, x>0,\)
\[x \in \begin{cases}
    (\frac{1}{\alpha}-\frac{1}{2\alpha^2},\frac{1}{\alpha}), & 0<\alpha \leq 2 \\
    (\frac{\alpha-2}{\alpha(\alpha-1)},\frac{1}{4}) \cap (\frac{3\alpha-8}{2\alpha(\alpha-2)},\frac{1}{4}), & 2<\alpha<4
    \end{cases}\]
(the intersection is non-empty as \(\alpha^2-5\alpha+8>0,(\alpha-4)^2>0\)), and consider the decomposition
\[P=(p_{ij}\chi_{|a_{ij}| \leq n^x})+(p_{ij}\chi_{n^x<|a_{ij}| \leq n^{\frac{3}{2\alpha}+\delta}})+(p_{ij}\chi_{|a_{ij}|>n^{\frac{3}{2\alpha}+\delta}}) :=P_s+P_m+P_b,\]
\[P_b=(p_{ij}\chi_{n^{\frac{3}{2\alpha}+\delta}<|a_{ij}|<\kappa b_n})+(p_{ij}\chi_{|a_{ij}| \geq \kappa b_n}):=P_{b,\kappa}+P_{B,\kappa}.\]
Showing for any \(\epsilon>0\) 
\begin{equation}\label{negligiblematrices}
    ||P_{b,\kappa}|| \leq 2\kappa, \hspace{0.3cm} ||P_m||=o_p(1),
\end{equation}
\begin{equation}\label{lowerbound2}
    \lim_{n \to \infty}{\mathbb{P}(\lambda_1(P) \geq \max{(max(A),\theta)}-\epsilon)}=1,
\end{equation}
\begin{equation}\label{upperbound}
    \mathbb{P}(||P_s+P_{B,\kappa}||>\max{(M,\theta)}+\epsilon) \leq \mathbb{P}(max(A)>M)+o(1)
\end{equation}
suffices since \(\kappa>0\) can be chosen arbitrarily small (the inequality in (\ref{negligiblematrices}) holding with high probability). The coming three subsections justify these claims in the stated order.

\subsection{Negligible Components: \(P_m\) and \(P_{b,\kappa}\)}\label{7.1}

Let
\[P_m=b^{-1}_n(a_{ij}\chi_{n^x<|a_{ij}| \leq n^{\frac{3}{2\alpha}+\delta}})+(\theta v_iv_j\chi_{n^x<|a_{ij}| \leq n^{\frac{3}{2\alpha}+\delta}}):=A_m+P_1,.\] 
\[P_{b,\kappa}=b^{-1}_n(a_{ij}\chi_{n^{\frac{3}{2\alpha}+\delta}<|a_{ij}| \leq \kappa b_n})+(\theta v_iv_j\chi_{n^{\frac{3}{2\alpha}+\delta}<|a_{ij}| \leq \kappa b_n}):=A_{b,\kappa}+P_2.\]
Proving \(||A_m||=o_p(1),||P_1||=o_p(1),||P_2||=o_p(1),\) and \(||A_{b,\kappa}|| \leq \kappa\) with high probability entails (\ref{negligiblematrices}). The last claim is immediate because \(A_{b,\kappa}\) has at most one nonzero entry per row with high probability, whereby \(||A_{b,\kappa}|| \leq \max_{1 \leq i,j \leq n}{|(A_{b,\kappa})_{ij}|} \leq \kappa:\) the complement of this event has probability at most
\[n \cdot n^2 \cdot (\mathbb{P}(|a_{11}|>n^{\frac{3}{2\alpha}+\delta}))^2 \leq n^3 \cdot (n^{-(\frac{3}{2\alpha}+\delta) \cdot (\alpha-\epsilon)})^2=n^{3-(3+2\alpha \delta)(1-\frac{\epsilon}{\alpha})}=o(1)\]
for an \(\epsilon>0\) sufficiently small.
\par
Consider \(A_m:\) in light of \(||A_m|| \leq b_n^{-1}\max_{1 \leq i \leq n}{\sum_{1 \leq j \leq n}{|a_{ij}|\chi_{n^x<|a_{ij}| \leq n^{\frac{3}{2\alpha}+\delta}}}},\) justifying 
\begin{equation}\label{easypart}
     b_n^{-1}\max_{1 \leq i \leq n}{\sum_{1 \leq j \leq n}{|a_{ij}|\chi_{n^x<|a_{ij}| \leq n^{\frac{3}{2\alpha}+\delta}}}}=o_p(1)
\end{equation}
gives the desired result. If \(\alpha \leq 2,\) part \((c)\) of proposition \(5.6\) in Benaych-Georges and Péché~\cite{benaychpeche} yields for fixed \(i,\)
\[\sum_{1 \leq j \leq n}{|a_{ij}|\chi_{n^x<|a_{ij}| \leq n^{\frac{3}{2\alpha}+\delta}}} \leq n^{\frac{1}{\alpha}+\alpha(\frac{1}{\alpha}-x)+\frac{1}{2\alpha}+\gamma+\gamma'}\]
for any \(\gamma'>0,\) with very high probability (i.e., the probability of its complement decays at least exponentially in \(n\)) using \(\mu=1,x=\frac{1}{\alpha}-\eta,\eta'=\frac{1}{2\alpha}+\gamma, \epsilon=\alpha(\frac{1}{\alpha}-x)+\frac{1}{2\alpha}+\gamma+\gamma';\)
finally,
\[\frac{1}{\alpha}+\alpha(\frac{1}{\alpha}-x)+\frac{1}{2\alpha}+\gamma+\gamma'<\frac{2}{\alpha}-2\gamma'',\]
for some \(\gamma''>0\) because \(\alpha(\frac{1}{\alpha}-x)+\gamma+\gamma'<\frac{1}{2\alpha};\) a union bound and \(b_n \geq n^{2\alpha-\gamma''}\) render (\ref{easypart}).
If \(\alpha \in (2,4),\) then \[b_n^{-1}n\mathbb{E}[|a_{11}|\chi_{n^x<|a_{11}| \leq n^{\frac{3}{2\alpha}+\delta}}] \leq n^{1-\frac{2}{\alpha}+\epsilon+(1-\alpha+\epsilon)x}=o(1)\] 
using \(x>\frac{\alpha-2}{\alpha(\alpha-1)},\) and 
\[b_n^{-4}n^2\mathbb{E}[a^4_{11}\chi_{n^x<|a_{11}| \leq n^{\frac{3}{2\alpha}+\delta}}] \leq n^{2-\frac{8}{\alpha}+\epsilon+(-\alpha+4+\epsilon)(\frac{3}{2\alpha}+\delta)}=o(1),\]
\[b_n^{-4}n^3(\mathbb{E}[a^2_{11}\chi_{n^x<|a_{11}| \leq n^{\frac{3}{2\alpha}+\delta}}])^2 \leq n^{3-\frac{8}{\alpha}+\epsilon+(-\alpha+2+\epsilon)2x}=o(1),\]
as \(2-\frac{8}{\alpha}+(-\alpha+4) \cdot (\frac{3}{2\alpha}+\delta)=(\delta-\frac{1}{2\alpha})(4-\alpha)<0, x>\frac{3\alpha-8}{2\alpha(\alpha-2)};\) (\ref{easypart}) ensues from a union bound and Markov's inequality for the fourth moment (after centering the left-hand side term).
\par
Continue with \(P_1\) and \(P_2:\) for \(p \in \mathbb{N},\) 
\[\mathbb{E}[tr(P_1^{p})] \leq \theta^{p}\mathbb{P}(n^x<|a_{11}| \leq n^{\frac{3}{2\alpha}+\delta}),\]
\[\mathbb{E}[tr(P_2^{p})] \leq \theta^{p}\mathbb{P}(n^{\frac{3}{2\alpha}+\delta}<|a_{11}| \leq \kappa b_n)\]
since if \(P_1=(m_{ij})_{1 \leq i,j \leq n},\) then
\[\mathbb{E}[tr(P_1^{p})]=\sum_{(i_0,i_1, \hspace{0.05cm} ... \hspace{0.05cm} ,i_{p-1}) \in \{1,2, \hspace{0.05cm} ... \hspace{0.05cm} ,n\}^p}{\mathbb{E}[m_{i_0i_1}m_{i_1i_2}...m_{i_{p-1}i_0}]} \leq \theta^{p}\mathbb{P}(n^x<|a_{11}| \leq n^{\frac{3}{2\alpha}+\delta})\]
as \((\chi_{n^x<|a_{ij}| \leq n^{\frac{3}{2\alpha}+\delta}})^l=\chi_{n^x<|a_{ij}| \leq n^{\frac{3}{2\alpha}+\delta}}\) for \(l \in \mathbb{N},\) whereby for \(i_0,i_1, \hspace{0.05cm} ... \hspace{0.05cm} ,i_{p-1} \in \{1,2, \hspace{0.05cm} ... \hspace{0.05cm} ,n\},\)
\[|\mathbb{E}[m_{i_0i_1}m_{i_1i_2}...m_{i_{p-1}i_0}]| \leq \mathbb{P}(n^x<|a_{11}| \leq n^{\frac{3}{2\alpha}+\delta}) \prod_{0 \leq j \leq p-1}{\theta |v_{i_j}| \cdot |v_{i_{j-1}|}}=\theta^p\mathbb{P}(n^x<|a_{11}| \leq n^{\frac{3}{2\alpha}+\delta}) \prod_{0 \leq j \leq p-1}{v^2_{i_j}},\]
with an identical rationale applying to \(P_2\) as well. Markov's inequality yields for \(t>0, p=\lfloor \log{\log{n}} \rfloor,\)
\[\mathbb{P}(||P_1|| \geq t) \leq \mathbb{E}[tr((P_1)^{2p})]/t^{2p} \leq n^{-3/2}(\theta/t)^{2p}=o(1),\]
\[\mathbb{P}(||P_2|| \geq t) \leq \mathbb{E}[tr((P_2)^{2p})]/t^{2p} \leq n^{-x\alpha/2}(\theta/t)^{2p}=o(1),\]
completing the justification of (\ref{negligiblematrices}).

\subsection{Lower Bound}\label{7.2}

Split (\ref{lowerbound2}) into
\begin{equation}\label{maxeq}
    \lim_{n \to \infty}{\mathbb{P}(\lambda_1(P) \geq max(A)-\epsilon)}=1,
\end{equation}
\begin{equation}\label{thetaeq}
    \lim_{n \to \infty}{\mathbb{P}(\lambda_1(P) \geq \theta-\epsilon)}=1.
\end{equation}
\par
Since \(\theta v v^T\) is positive semidefinite, \(\lambda_1(P) \geq \lambda_1(b_n^{-1}A),\) and \(\lambda_1(b_n^{-1}A) \geq max(A)-\epsilon\) because for \(\epsilon>0\) small enough,
\[\max_{1 \leq i \leq n}{|a_{ii}|} \leq b_n^{1-2\epsilon} \leq b_n^{1-\epsilon} \leq \max_{1 \leq i \leq j \leq n}{|a_{ij}|},\] 
with high probability (\(b_{\sqrt{2n}}^{-1}\max_{1 \leq i \leq n}{|a_{ii}|} \Rightarrow E_{\alpha}\)), in which case \(|a_{i_0j_0}|=\max_{1 \leq i,j \leq n}{|a_{ij}|}:=b_n max(A)\) has \(i_0 \ne j_0\) and \(u \in \mathbb{S}^{n-1}\) with \(u_{i_0}=|u_{j_0}|=\frac{1}{\sqrt{2}},u_{j_0}a_{i_0j_0} \geq 0\) entails
\[b_n^{-1}u^TAu=b_n^{-1}|a_{i_0j_0}|+\frac{1}{2}b_n^{-1}(a_{i_0i_0}+a_{j_0j_0}) \geq max(A)-b_n^{-1}\max_{1 \leq i \leq n}{|a_{ii}|} \geq max(A)-b_n^{-\epsilon}.\]
\par
With (\ref{maxeq}) complete, proceed with (\ref{thetaeq}): take \(\hat{A}_{b,L}=b^{-1}_n(a_{ij}\chi_{n^{\frac{3}{2\alpha}+\delta} < |a_{11}| \leq Lb_n}),\hat{A}_{B,L}=b^{-1}_n(a_{ij}\chi_{|a_{11}|> Lb_n})\)
\[v^TPv=\theta+b_n^{-1}\sum_{1 \leq i,j \leq n}{v_iv_ja_{ij}}=\theta+v^TA_sv+v^TA_mv+v^T\hat{A}_{b,L}v+v^T\hat{A}_{B,L}v.\]
Chebyshev's inequality gives \(v^TA_sv, v^TA_{b,L}v=o_p(1):\) their expectations are \(0\) by symmetry, and \[\mathbb{E}[(v^TA_sv)^2]=b_n^{-2}\mathbb{E}[a^2_{11}\chi_{|a_{11}| \leq n^x}] \leq n^{2x}b^{-2}_{n},\]
\[\mathbb{E}[(v^TA_{b,L}v)^2]=b_n^{-2}\mathbb{E}[a^2_{11}\chi_{n^{\frac{3}{2\alpha}+\delta}<|a_{11}| \leq Lb_n}] \leq L^2\mathbb{P}(|a_{11}| > n^{\frac{3}{2\alpha}+\delta}).\] 
Additionally, \(||A_m||=o_p(1)\) from (\ref{negligiblematrices}), whereby
\[\mathbb{P}(\lambda_1(P) < \theta-\epsilon) \leq \mathbb{P}(max(A) \geq L)+o(1)+(n^{2x}b^{-2}_{n}+L^2n^{-3/2})(\epsilon/3)^{-2}\]
for \(L>0,\) rendering (\ref{thetaeq}) inasmuch as \(max(A) \Rightarrow E_{\alpha}.\)

\subsection{Upper Bound}\label{7.3}

Traces of large even powers of \(P_s+P_{B,\kappa}\) are employed to justify (\ref{upperbound}): since a conditioning as in subsection~\ref{0.1} would require \(b_n\) to grow faster than regular variation imposes (\((\mathbb{P}(|a_{11}| \leq \kappa b_n))^{-n^2}\) must diverge slower than a power of \(n,\) which translates to \(\liminf_{n \to \infty}{\frac{b_n}{\frac{n^{2/\alpha}}{(\log{n})^{1/\alpha}}}}<\infty\)), work instead with indicator functions.
\par
Fix \(\epsilon_0>0, \epsilon_1=\epsilon_1(\epsilon_0) \in (0,\min{(\alpha,\frac{2}{\alpha})}), (2/\alpha-\epsilon_1)(\alpha-\epsilon_1)=2-\epsilon_0\) (for some \(c(\alpha)>0\) and all \(\epsilon_0 \in (0,c(\alpha)),\) there is such \(\epsilon_1:\) in the end, \(\epsilon_0 \to 0\)); let \(V=\{1 \leq i \leq n: |v_i| \geq n^{-1/4}\},m=m_n=n^{2\epsilon_0},\) and \(\mathcal{S}=\mathcal{S}_{n,m}\) the set of \(S \subset \{(i,j): 1 \leq i< j \leq n\}\) with:
\par
\((a) \hspace{0.05cm} |S| \leq m;\) 
\par
\((b)\) any \((i,j) \in S\) has \(\{i,j\} \cap V=\emptyset;\)
\par
\((c)\) all pairwise distinct elements \((i_1,j_1),(i_2,j_2)\) of \(S\) satisfy \(\{i_1,j_1\} \cap \{i_2,j_2\}=\emptyset. \newline\) 
Consider the events
\[E(S,\kappa,M)=\{\max_{i \leq j, (i,j) \not \in S}{|a_{ij}|} \leq \kappa b_n<\min_{(i,j) \in S}{|a_{ij}|} \leq \max_{(i,j) \in S}{|a_{ij}|} \leq Mb_n\},\]
(i.e., the set of positions of the nonzero entries of \(A_{B,\kappa}\) is \(S \cup \{(j,i): (i,j) \in S\}\)). Clearly, \((E(S,\kappa,M))_{S \in \mathcal{S}}\) are pairwise disjoint, and the complement of \(\cup_{S \in \mathcal{S}}{E(S,\kappa,M)}\) is included in the union of the following events: there are at least \(2m+1\) entries of \(A\) larger (in absolute value) than \(\kappa b_n\) or two of them on the same row, \(max(A)>M,\) \(b_n^{-1}\max_{1 \leq i \leq n}{|a_{ii}|} \geq \kappa,\) and \(b_n^{-1}\max_{1 \leq i<j \leq n, \{i,j\} \cap V \ne \emptyset}{|a_{ij}|} \geq \kappa,\) whose probabilities add up to at most
\[\binom{n^2}{m}(\mathbb{P}(|a_{11}| \geq \kappa b_n))^m+n  \cdot n^2(\mathbb{P}(|a_{11}| \geq \kappa b_n))^2+\mathbb{P}(max(A)>M)+o(1)=\mathbb{P}(max(A)>M)+o(1)\]
because
\[\binom{n^2}{m}(\mathbb{P}(|a_{11}| \geq \kappa b_n))^m \leq \frac{n^{2m}}{m!} (\kappa n^{2/\alpha-\epsilon_1})^{-(\alpha-\epsilon_1)m} \leq \frac{n^{2m}}{(m/e)^{m}} (\kappa n^{2/\alpha-\epsilon_1})^{-(\alpha-\epsilon_1)m}=(\kappa^{-(\alpha-\epsilon_1)}en^{-\epsilon_0})^m,\]
and for \(\epsilon_2>0\) small enough, with high probability \(b_n^{1-\epsilon_2} \leq max(A),\) whereas 
\[\max_{1 \leq i \leq n}{|a_{ii}|}\leq n^{1/\alpha+\epsilon_2} \leq b_n^{1-2\epsilon_2}, \max_{1 \leq i<j \leq n, \{i,j\} \cap V \ne \emptyset}{|a_{ij}|} \leq (2n|V|)^{1/\alpha+\epsilon_2} \leq (2n^{3/2})^{1/\alpha+\epsilon_2} \leq b_n^{1-2\epsilon_2}.\]
\par
A sufficient condition for (\ref{upperbound}) is for \(p \leq c(M,\alpha)\log{n},\)
\begin{equation}\label{condtraceM1}
    \mathbb{E}[tr((P_s+P_{B,\kappa})^{2p})\chi_{E(S,\kappa,M)}] \leq \theta^{2p}+n^{-c_{1}(\alpha)}:
\end{equation}
uniformly in \(S \in \mathcal{S}.\) These inequalities and Markov's inequality yield for \(p=\lfloor c(\epsilon,\theta) \log{n} \rfloor,\) 
\[\mathbb{P}(||P_s+P_{B,\kappa}||>\max{(M,\theta)}+\epsilon) \leq \mathbb{P}((\cup_{S \in \mathcal{S}}{E(S,\kappa,M)})^c)+ (\frac{\theta}{\theta+\epsilon})^{2p}+n^{-c_{1}(\alpha)}(\theta+\epsilon)^{-p}=\mathbb{P}(max(A)>M)+o(1).\]
\par
Proceed now with (\ref{condtraceM1}): since
\[\chi_{E(S,\kappa,M)}=\prod_{1 \leq i \leq j \leq n, (i,j) \not \in S}{\chi_{|a_{ij}| \leq \kappa b_n}} \prod_{1 \leq i \leq j \leq n, (i,j) \in S}{\chi_{|a_{ij}| \in (\kappa b_n,Mb_n]}},\]
the cycles underlying the expectation have few edges with large moments (i.e., for \((i,j) \in S,\) the indicator is \(\chi_{|a_{ij}| \in (\kappa b_n,Mb_n]},\) while for the rest it is \(\chi_{|a_{ij}| \leq n^x}\)). When \(\theta=0,\)
\[tr((P_s+P_{B,\kappa})^{p})=\sum_{\mathbf{i}=(i_0,i_1, \hspace{0.05cm} ... \hspace{0.05cm},i_{p-1},i_0) \in A_{s} \cup A_{B,\kappa}}{a_{i_0i_1}a_{i_1i_2}...a_{i_{p-1}i_0}},\]
where the notation in the summation is a shorthand for all edges in the cycle belonging to \(A_s\) or \(A_{B,\kappa}.\) To deal with the more general situation \(\theta>0,\) open the parentheses in each product corresponding to such \(\mathbf{i}:\) the new summands still correspond to cycles, but some edge contributions are replaced as in \(a_{ij} \to \theta v_iv_j\chi_{|a_{ij}| \in (0,n^x]}\) or \(a_{ij} \to \theta v_iv_j\chi_{|a_{ij}| \in \cup (\kappa b_n,Mb_n]}.\) Split these products in three classes by seeing them as polynomials in \(\theta\) with:
\begin{center}
    (C1) no factor of \(\theta,\hspace{0.3cm}\) (C2) at least one factor of \(\theta\) and an \(a_{ij}, \hspace{0.3cm}\) (C3) only factors of \(\theta.\) 
\end{center}
\par
Consider C3, and argue as for \(P_1\) and \(P_2\) in subsection~\ref{7.1}. An undirected edge \(e=ij\) of multiplicity \(l\) in \(\mathbf{i}\) generates a factor that is at most
\[|\theta v_iv_j|^l\mathbb{E}[(\chi_{|a_{ij}| \in (0,n^x] \cup (\kappa b_n,Mb_n]})^l]=|\theta v_iv_j|^l\mathbb{P}(|a_{11}| \in (0,n^x] \cup (\kappa b_n,Mb_n]]) \leq |\theta v_iv_j|^l,\]
yielding the overall contribution is upper bounded by \(\theta^{2p}.\)
\par
Continue with C1, for which it is shown for \(p \leq n^{c_1(\alpha,M)},\)
\begin{equation}\label{nothetatrace2}
    \mathbb{E}[tr((A_s+A_{B,\kappa})^{2p})\chi_{E(S,\kappa,M)}] \leq n^{-c_2(\alpha)}:
\end{equation}
this plays a role in the analysis of C2 terms as well. Proceed as in the case \(\alpha=4:\) for \(\epsilon>0, n \geq n(\epsilon),\) 
\begin{equation}\label{mb1}
    \mathbb{E}[a^{2q}\chi_{|a_{11}| \leq n^x}] \leq \max{(n^{(2q-\alpha+\epsilon)x},C)}
\end{equation}
since
\[\mathbb{E}[a^{2q}\chi_{|a_{11}| \leq n^x}] \leq (C(\epsilon))^{2q}+\int_{C(\epsilon)}^{n^x}{2qy^{2q-1} \cdot y^{-\alpha+\epsilon/2}dy} \leq (C(\epsilon))^{2q}+n^{x(2q-\alpha+\epsilon/2)} \cdot \frac{2q}{2q-\alpha+\epsilon/2} \leq n^{x(2q-\alpha+\epsilon)}\]
for \(q \geq 2>\frac{\alpha}{2};\) this inequality remains true for \(q=1, \alpha \leq 2,\) and when \(q=1, \alpha>2,\) the expectation is at most \(C=\mathbb{E}[a^2_{11}].\)
\par
Take first the cycles containing only elements belonging to \(A_s,\) and reason as in subsection \(2.1\) of \cite{oldpaper}. The analogue of the inequality proved in it for \(B=A_s\) is:

\begin{lemma}\label{lemma0}
There exist \(c_3(\alpha), c_4(\alpha),c_5(\alpha)>0\) such that for \(p \in \mathbb{N}, p \leq n^{c_3(\alpha)},\) 
\begin{equation}\label{boundBtrace}
    \mathbb{E}[tr(B^{2p})] \leq n^{2-c_4(\alpha)-pc_5(\alpha)}.
\end{equation}
\end{lemma}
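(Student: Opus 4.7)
The plan is to adapt the Sinai--Soshnikov combinatorial bound sketched in steps 1--5 of subsection~\ref{0.1} (equation~(\ref{trace})) to the matrix $B=A_s$, harvesting the exponential-in-$p$ decay that arises because the entries of $A_s$ have a variance well below the critical scale $1/n$.

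The first step is to record the moment estimates. Combining (\ref{mb1}) with the lower bound $b_n \geq n^{2/\alpha - \epsilon}$, for $k \geq 1$,
\[\mathbb{E}[((A_s)_{11})^{2k}] \leq b_n^{-2k}\max(n^{x(2k-\alpha+\epsilon)},\,C).\]
Writing this as $n^{\mathrm{mbr}(k)}$, the range of $x$ specified at the start of section~\ref{sect1} is calibrated so that $1+\mathrm{mbr}(k) \leq -c(\alpha)$ for some $c(\alpha)>0$, for every $k\ge 1$. Concretely, for $\alpha\le 2$ the condition $x<1/\alpha$ gives $1-4/\alpha+x(2-\alpha+\epsilon) < -2/\alpha+O(\epsilon)$; for $2<\alpha<4$ the finite variance yields $1-4/\alpha+O(\epsilon)<0$ at $k=1$, while $x<1/4$ together with $2x-4/\alpha<0$ keeps the $k\ge 2$ contributions controlled.

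Next I would apply the expansion. Writing $\mathbb{E}[tr(A_s^{2p})]=\sum_{\mathbf{i}}\mathbb{E}[a_{\mathbf{i}}]$ over even cycles (symmetry kills odd ones) and grouping by tuples $(n_1,\ldots,n_p)$ with $\sum k n_k=p$, steps 1--4 bound the number of cycles mapped to a fixed tuple by $p!\cdot 2^{2p}\cdot n^{1+\sum_k n_k}\prod_k (2k)^{kn_k}/((k!)^{n_k}n_k!)$, while step 5 contributes $\prod_k\mathbb{E}[((A_s)_{11})^{2k}]^{n_k}$. The exponent of $n$ in the resulting summand therefore takes the form
\[1 + \sum_{k\ge 1} n_k\bigl[1+\mathrm{mbr}(k)\bigr],\]
and since each bracket is at most $-c(\alpha)$, this exponent is bounded by $1 - c(\alpha)\sum_k n_k$. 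Using $\sum k n_k = p$ and the fact that the linear-in-$k$ part of $\mathrm{mbr}(k)$ (namely $k(2x-4/\alpha)$) also contributes a negative coefficient on $p$, the maximum over tuples works out to $1 - p\,c_5(\alpha)$ for some $c_5(\alpha)>0$; the extreme tuples to check are ``all $n_1=p$'' and ``a single $n_p=1$'', both of which are controlled directly by the bound on $\mathrm{mbr}$. Taking $p\le n^{c_3(\alpha)}$ with $c_3<c_5$ absorbs the combinatorial prefactor $2^{2p}p!\prod_k(2k)^{kn_k}/((k!)^{n_k}n_k!)$ and the partition count into $n^{-c_4(\alpha)}$, giving $\mathbb{E}[tr(A_s^{2p})]\le n^{2-c_4-pc_5}$.

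The main obstacle is bookkeeping. One must verify that the per-tuple exponent is negative for every admissible $(n_1,\ldots,n_p)$ and not only the two extremal ones, and that the decay rate $c_5(\alpha)$ admits a positive lower bound uniform over compact subsets of $(0,4)$. The latter is most delicate near $\alpha\to 4^-$, where $1-4/\alpha\to 0$; the chosen $x$ must provide a margin that shrinks but stays strictly positive, so a careful dependence of $\epsilon,\delta$ on $\alpha$ is required to close the argument uniformly.
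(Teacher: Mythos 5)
Your high-level plan — rerun the Sinai–Soshnikov expansion with $B=A_s$ and the truncated-moment estimates (\ref{mb1}), then read off a negative linear-in-$p$ exponent — is indeed the route the paper takes, and your identification of the decisive slopes ($2x+1-4/\alpha<0$ for $\alpha\le 2$, $1-4/\alpha<0$ for $2<\alpha<4$) is essentially correct.

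There is, however, a genuine gap in your step-5 moment accounting. You assert that a cycle mapped to the tuple $(n_1,\ldots,n_p)$ contributes $\prod_k\mathbb{E}[((A_s)_{11})^{2k}]^{n_k}$, and you then read the exponent as $1+\sum_k n_k[1+\mathrm{mbr}(k)]$. But $\mathbb{E}[a_{\mathbf{i}}]$ is a product over distinct \emph{undirected edges}, $\prod_e\mathbb{E}[a_{11}^{2k(e)}\chi]$, while $n_k$ counts \emph{vertices} marked $k$ times; a vertex can collect markings from several different edges, and the marked copies of a single edge can land on either endpoint. The two bookkeepings do not coincide, and $\prod_k\mathbb{E}[a^{2k}]^{n_k}$ is \emph{not} an upper bound for $\prod_e\mathbb{E}[a^{2k(e)}]$. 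For a concrete counterexample take $p=5$, $i_0=u$, and
\[
\mathbf{i}=(u,v,w,u,v,u,v,w,u,v,u).
\]
Here $uv$ has multiplicity $6$ ($k=3$), $vw$ and $wu$ each have multiplicity $2$ ($k=1$), so the edge-side contribution is $\mathbb{E}[a^6](\mathbb{E}[a^2])^2$; but the marked right endpoints are $v,w,u,u,v$, giving $n_2=2$ (for $u,v$) and $n_1=1$ (for $w$), so the vertex-side product is $(\mathbb{E}[a^4])^2\,\mathbb{E}[a^2]$. By Cauchy–Schwarz $\mathbb{E}[a^6]\mathbb{E}[a^2]\ge(\mathbb{E}[a^4])^2$, so the true contribution strictly exceeds what your step-5 bound allows.

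The paper closes exactly this gap by staying at the edge level and converting to the tuple $(n_k)$ via the combinatorial inequality (\ref{eq14}), $\sum_{e,k(e)\ge 2}k(e)\le E+1+\sum_{k\ge 2}kn_k$ with $E$ the number of distinct edges of multiplicity $\ge 4$. Plugging (\ref{mb1}) per edge and then applying (\ref{eq14}) gives, for $\alpha\le 2$, $\mathbb{E}[a_{\mathbf{i}}]\le n^{2xp+x(\alpha-\epsilon)}$ and, for $2<\alpha<4$, $\mathbb{E}[b_{\mathbf{i}}]\le(C+1)^pn^{2x+2x\sum_{k\ge 2}kn_k}$; combining with $b_n^{-2p}\le n^{-4p/\alpha+O(\epsilon p)}$ and the step 1–4 count $n^{1+\sum n_k}$ produces the $n^{2-c_4-pc_5}$ bound. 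The resulting exponent carries an extra $O(1)$ additive constant (e.g.\ $x\alpha$), which is precisely why the statement has $2-c_4$ rather than $1-c_4$; your computation would produce a stronger-looking $1-p\,c_5$ only because the unjustified step 5 undercounts. As a secondary point, ``taking $c_3<c_5$'' is not the right formulation of the $p$-range: the constraint is of the form $p=o(n^{1/2-2x})$ (so that the Sinai–Soshnikov prefactors contract), and $c_3(\alpha)$ is taken small enough for that, independently of $c_5$.
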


\begin{proof}
Use the counting technique described in subsection~\ref{0.1} underlying (\ref{trace}). Steps \(1-4\) continue to hold as they are concerned with combinatorial properties of even cycles, whereas in step \(5\) the moment bounds change. For an undirected edge \(e,\) denote by \(2k(e)\) its multiplicity in \(\mathbf{i}\) (\(k(e) \in \mathbb{Z}\) because \(\mathbf{i}\) is even), and let \(E=E(\mathbf{i}):=|\{e, k(e) \geq 2\}|.\) If \(\alpha \leq 2,\) then (\ref{mb1}) gives
\[\mathbb{E}[b_{\mathbf{i}}] \leq \prod_{e, k(e)=1}{n^{(2-\alpha+\epsilon)x}} \prod_{e, k(e) \geq 2}{n^{(2k(e)-\alpha+\epsilon)x}}=n^{x(2-\alpha+\epsilon)(p-\sum{k(e)})}n^{2x\sum{k(e)}-x(\alpha-\epsilon) E}.\]
The key to control this expectation is inequality \((14)\) in \cite{oldpaper}:
\begin{equation}\label{eq14}
    \sum_{e,k(e) \geq 2}{k(e)} \leq E+1+\sum_{k \geq 2}{kn_k},
\end{equation}
giving
\[\mathbb{E}[b_{\mathbf{i}}] \leq n^{x(2-\alpha+\epsilon)p+x(\alpha-\epsilon)\sum_{k \geq 2}{kn_k}+x(\alpha-\epsilon)} \leq n^{x(2-\alpha+\epsilon)p+x(\alpha-\epsilon)p+x(\alpha-\epsilon)}.\]
Reasoning in the same fashion as in section \(2.1\) of \cite{oldpaper} yields the bound
\[b_n^{-2p} \cdot n^{x(2-\alpha+\epsilon)p+x(\alpha-\epsilon)p+x(\alpha-\epsilon)} \cdot n^{p+1}2^{2p}e^{p},\]
providing the desired results because \(2x+1-\frac{4}{\alpha}<1-\frac{2}{\alpha} \leq 0.\)
\par
If \(\alpha \in (2,4),\) then
\[\mathbb{E}[b_{\mathbf{i}}] \leq \prod_{e, k(e)=1}{C} \prod_{e, k(e) \geq 2}{n^{(2k(e)-\alpha+\epsilon)x}}=C^{p-\sum{k(e)}}n^{2x\sum{k(e)}-x(\alpha-\epsilon) E} \leq (C+1)^{p}n^{2x+2x\sum_{k \geq 2}{kn_k}},\]
and since \(x<\frac{1}{4},\) the overall bound is for \(p=o(n^{1/2-2x}),\)
\[b_n^{-2p} \cdot (C+1)^{p} n^{2x}\cdot 2^{2p}n^{p+1}e^{8},\]
in which case the conclusion follows as \(\alpha<4.\)
\end{proof}

Return to the analysis of C1 terms. As in subsection~\ref{0.1}, split an even cycle \(\mathbf{i}\) into \(\mathbf{i}',\mathbf{i}'',\) fix \(\mathbf{i}'',\) and denote by \(2l\) its number of edges (if \(l \not \in \mathbb{Z},\) its contribution vanishes by symmetry). Lemma~\ref{lemma0} and (\ref{iandii}) give the bound
\[\sum_{1 \leq l \leq p-1, 1 \leq t \leq l}{\binom{2p}{2l}(2m)^{t} (2t)^{2l} n^{-t} M^{2l}n^{(-2+\epsilon)t} \cdot n^{2-c_4(\alpha)-(p-l)c_5(\alpha)}(p-l)!}:\]
let \(t\) be the number of pairwise distinct undirected edges appearing in \(\mathbf{i}''\) (\(t \leq l\) since \(\mathbf{i}\) is even); for a given \(t,\) there are at most \(\binom{2p}{2l}(2m)^{t} (2t)^{2l}\) such cycles, for which \(\mathbb{E}[a_{\mathbf{i}''}] \leq M^{2l}n^{(-2+\epsilon)t}\) because
\[b_n^{-2q}\mathbb{E}[a^{2q}_{11}\chi_{|a_{11}| \in (\kappa b_n,Mb_n]}] \leq M^{2q}\mathbb{P}(|a_{11}|>\kappa b_n) \leq M^{2q}n^{-2+\epsilon};\]
the factor \(((2l+2)!)^{4l}\) from step \(3',\) counting the possibilities in which certain unmarked edges of \(\mathbf{i}'\) can be arranged, might be replaced by \((p-l)!\) (\(\mathbf{i}'\) has length \(2(p-l)\)); finally, \(V(\mathbf{i}')\) contains at least \(t\) vertices from \(\mathbf{i}''\) and hence a saving of \(n^{-t}\) in step \(3'.\) Since
\[\frac{(2m)^{t+1}(2t+2)^{2l}n^{-(t+1)}n^{(-2+\epsilon)(t+1)}}{(2m)^{t}(2t)^{2l}n^{-t}n^{(-2+\epsilon)t}} \leq 2m \cdot 2^{2l} \cdot n^{-1} \cdot n^{-2+\epsilon}<1,\]
for \(m=n^{2\epsilon_0}, p \leq c(M,\alpha)\log{n},\) C1 contributes at most
\[\sum_{1 \leq l \leq p-1}{\binom{2p}{2l}l \cdot 2m  \cdot 2^{2l} \cdot n^{-1} M^{2l}n^{-2+\epsilon} \cdot n^{2-c_4(\alpha)-(p-l)c_5(\alpha)}(p-l)!} \leq\]
\begin{equation}\label{b1}
    \leq p\sum_{1 \leq l \leq p-1}{2m \cdot n^{-1} (2M) ^{2l}n^{-2+\epsilon} \cdot n^{2-c_4(\alpha)-(p-l)c_6(\alpha)}} \leq p^2((2M)^{2p-2}+1)n^{-1/2} \leq n^{-c_7(\alpha)}, 
\end{equation}
using \(\binom{2p}{2l}(p-l)! \leq (2p)^{2p-2l}(p-l)^{p-l} \leq (4p^3)^{p-l}.\)
\par
Continue with C2. For \(\mathbf{i}=(i_0,i_1, \hspace{0.05cm} ... \hspace{0.05cm},i_{2p-1},i_0),\) suppose there are \(s \geq 1\) clusters of erased edges with \((l_i,r_i,L_i)_{1 \leq i \leq s}\) their left, right endpoints, and their lengths, respectively (i.e., \(l_i \leq r_i-1, r_{i} \leq l_{i+1}-1, l_{s+1}:=2p-l_1\)). Symmetry yields \(\mathbb{E}[a_{\mathbf{i}}]=0\) if among the rest of the edges there is one with odd multiplicity, which makes such cycles negligible.
If \((l_i,r_i,L_i)_{1 \leq i \leq s}\) as well as the rest of \(\mathbf{i}\) are fixed, then the contribution of the \(j^{th}\) cluster is at most \(\theta^{L_j} |v_{l_j}v_{r_j}|\) because
\[\sum_{(i_1,i_2,\hspace{0.05cm} ... \hspace{0.05cm},i_{L_j-1})}{|(\theta v_{l_j}v_{i_1})(\theta v_{i_1}v_{i_2})...(\theta v_{L_j-1}v_{r_j})|}=\theta^{L_j}|v_{l_j}v_{r_j}|\sum_{(i_1,i_2, \hspace{0.05cm} ... \hspace{0.05cm},i_{L_j-1})}{v_{i_1}^2v_{i_2}^2...v^2_{i_{L_j-1}}}=\theta^{L_j}|v_{l_j}v_{r_j}|.\]
Hence they generate at most \(\max{(\theta^{2p},1)}\prod_{1 \leq j \leq s}{(|v_{l_j}v_{r_j}|)}.\) 
\par
Now consider the remaining \(s\) segments: recall each undirected edge appears an even number of times in their union. In particular, in the multiset of endpoints, each vertex has even multiplicity (the number of edges adjacent to it is odd for each segment). This renders these paths can be arranged in a union of disjoint even cycles (no two share an edge), by gluing them and using induction on \(s:\) if \(s=1,\) this is clear because \(l_1=r_1;\) suppose \(s \geq 2;\) if some \(l_i \ne r_i,\) then \(l_i=l_j\) or \(l_i=r_j\) for a \(j \ne i,\) and so the \(i^{th}\) and \(j^{th}\) segments can be merged; else, \(l_i=r_i,1 \leq i \leq s;\) if the first segment shares no edge with any other, then it is an even cycle disjoint from the rest; otherwise, it can be glued with another along a shared edge; in all three cases, the induction hypothesis can be employed.
\par
The previous paragraphs entail the C2 contribution can be upper bounded by products of even cycles with non-negative weights: look next at the preimages of the two mappings above (the first takes cycles to clusters and segments, and the second segments to sets of disjoint even cycles). For \(s\) fixed, these have sizes at most 
\begin{equation}\label{cp}
    2p \sum_{2p-s \geq l \geq s}{\binom{2p-l}{s-1} \cdot \binom{l-1}{s-1} \cdot s!} \leq 2p \sum_{2p-s \geq l \geq s}{\binom{2p-l}{s-1} \cdot l^s} \leq (2p)^2(2p-s)^{s-1}(2p-s)^{s} \leq (2p)^{2s+1},
\end{equation}
where \(l\) is the number of erased edges: clearly \(s \leq l \leq 2p-s\) since there are \(s\) erased clusters, each with at least one edge, and \(s\) remaining, with the same property; take a cycle of length \(2p-l\) (it must be even, but this can be left out); there are \(\binom{2p-l}{s-1}\) ways of splitting the cycle into \(s\) pieces, \(s!\) of ordering them, \(\binom{(l-s)+s-1}{s-1}=\binom{l-1}{s-1}\) possibilities for the distribution of erased edges (there are \(l\) of them in \(s\) clusters, each of size at least \(1,\) and 
\begin{equation}\label{atl}
    \tilde{a}_{t,L}=|\{(x_1,\hspace{0.05cm} ... \hspace{0.05cm}, x_t): x_1+...+x_{t}=L, x_i \in \mathbb{Z}, x_i \geq 0\}|=\binom{t+L-1}{L}=\binom{t+L-1}{t-1}
\end{equation}
by induction on \(L+t,\) employing \(\tilde{a}_{t,L}=\tilde{a}_{t-1,L}+\tilde{a}_{t,L-1}, L \geq 2\)), and \(2p\) ways of choosing the first vertex of the cycle. Let 
\[\tilde{S}:=\{1 \leq i \leq n, \hspace{0.1cm} \exists (i,j) \in S\} \cup \{1 \leq j \leq n, \hspace{0.1cm} \exists (i,j) \in S\},\]
and \(t_1,t_2\) be the numbers of endpoints (counting multiplicities) in \(\tilde{S}\) and \(\tilde{S}^c,\) respectively: the \(t_1\) vertices generate a factor of \(n^{-t_1/8}\) because \(|v_i| \leq n^{-1/4}, m=n^{2\epsilon_0},\) and so do the remaining \(t_2\) inasmuch as any even cycle with at least \(t_2\) vertices in \(\tilde{S}^c\) contains at least \(t_2/2\) edges belonging to \(A_s;\) thus, in (\ref{b1}) the sum is over \(1 \leq l \leq p-t_2/2,\) giving at most 
\(p^2((2M)^{2p-t_2}+1)n^{-tc_8(\alpha)-c_7(\alpha)}.\) 
\par
Therefore, putting all the endpoints together yields a factor of \(n^{-c_9(\alpha)s},\) whereby the overall bound becomes in light of (\ref{b1}),
\[\sum_{1 \leq s \leq p}{(2p)^{3s} \cdot p^2((2M)^{2p}+1)n^{-sc_8(\alpha)-c_7(\alpha)}} \leq p^2n^{-c_7(\alpha)}\sum_{1 \leq s \leq p}{n^{-c_9(\alpha)s} \cdot ((2M)^{2p}+1)} \leq\]
\[\leq 2p^3n^{-c_7(\alpha)}((2M)^{2p}+1) \leq n^{-c_{10}(\alpha)}\]
for \(p \leq c(M,\alpha)\log{n}.\)
\par

\section{Delocalized Eigenvector with \(\alpha=4\)}\label{sect3}

Let \(\delta_1, \delta_2, \delta_3 \in (0,\frac{1}{64}),\kappa>0\) be fixed, \((m_n) \subset \mathbb{N}, \lim_{n \to \infty}{m_n}=\infty, m^2_n \leq \min{(\log{\log{n}},||v||^{-1}_{\infty})},\) and 
\[\frac{1}{\sqrt{n}} A=A_s+A_m+A_{b,\kappa}+A_{B,\kappa},\]
\[A_s=\frac{1}{\sqrt{n}}(a_{ij}\chi_{|a_{ij}| \leq n^{1/4-\delta_1}}), \hspace{0.1cm} A_m=\frac{1}{\sqrt{n}}(a_{ij}\chi_{n^{1/4-\delta_1}<|a_{ij}| \leq n^{3/8+\delta_2}}),\] \[A_{b,\kappa}=\frac{1}{\sqrt{n}}(a_{ij}\chi_{n^{3/8+\delta_2}<|a_{ij}| \leq \kappa \sqrt{n} }), \hspace{0.1cm} A_{B,\kappa}=\frac{1}{\sqrt{n}}(a_{ij}\chi_{\kappa \sqrt{n} \leq |a_{ij}| }).\]
With high probability \(||A_s|| \leq 2+\epsilon, ||A_m||=o(1),||A_{b,\kappa}|| \leq \kappa, A_{B,\kappa}\) has at most \(m=m_n\) nonzero entries, any two on different rows (using for \(A_s\) (\ref{traceineq}) with \(p=\lfloor (\log{n})^2 \rfloor,\) reasoning as in \ref{7.1} for \(A_{b,\kappa}, A_{B,\kappa},\) and using a version of (\ref{traceineq}) for \(A_m:\) see subsection \(1.2\) of \cite{oldpaper}). 
\par
Two conditionings are of interest: define \(\mathcal{S}=\mathcal{S}_{n,m}, E(S,\kappa,M)\) as in subsection~\ref{7.3} (drop \((b)\)), and
\[E^{B}(S,\kappa,M)=E(S,\kappa,M) \cap \{a_{ij}, (i,j) \in S\}\]
(\(E\) requires \(\{(i,j):(A_{B,\kappa})_{ij} \ne 0\}=S \cup \{(j,i): (i,j) \in S\},\) while \(E^B\) additionally fixes \(\{(A_{B,\kappa})_{ij} \ne 0\}\)). These two families consist of pairwise disjoint events, with their complements contained in the union of \(\{max(A)>M\}\) and an event of small probability (see subsection~\ref{7.3}). Hence showing
\[\lim_{n \to \infty}{\mathbb{P}(||P_s+P_{B,\kappa}||>\max{(F(\theta),f(M))}+\epsilon \hspace{0.05cm} | \hspace{0.05cm} E(S,\kappa,M))}=0\]
\[\lim_{n \to \infty}{\mathbb{P}(\lambda_1(P_s+P_{B,\kappa})<\max{(F(\theta),f(max(A)))}-\epsilon \hspace{0.05cm} | \hspace{0.05cm} E^B(S,\kappa,M))}=0\]
uniformly in \(S \in \mathcal{S}\) (i.e., bounds involving solely \(M, m, n\)) suffices. For the sake of simplicity, denote conditioning on these events by \(*,**,\) respectively, and fix \(S.\) The key inequalities are for \(p \leq n^{1/7}, n \geq n_0,\)
\begin{equation}\label{eq20}
    s_1(\theta,p) \leq \mathbb{E}[tr(P_s^{2p})-tr(A_s^{2p})] \leq \theta^{2p}+2p \cdot s_1(\theta,p)+O(n^{-1}p^5s_1(\theta,p))+O(p(\max{(\theta,2)})^{2p}),
\end{equation}
\begin{equation}\label{eq10}
    \mathbb{E}_{*}[tr((P_s+P_{B,\kappa})^{2p})-tr(P_s^{2p})] \leq 2m \cdot s(M+\epsilon,p)+O(2m ||v||^2_{\infty} \cdot p^{12p}(c(M,\epsilon,\theta))^p)
\end{equation}
with \(s_1:(0,\infty)\times \mathbb{N} \to (0,\infty), \lim_{p \to \infty}{(s_1(\theta,p))^{1/p}}=F^2(\theta)\) (\(s_1(\theta,p)\) is given by (\ref{mainsum})) and \(\epsilon>0.\) Reasoning as in subsection~\ref{7.1} for \(P_1,P_2,\) \(||P_m||=||A_m||+o(1)=o(1),||P_{b,\kappa}||=||A_{b,\kappa}||+o(1) \leq \kappa+o(1)\) with high probability. Once (\ref{eq20}) and (\ref{eq10}) are justified (subsections~\ref{3.1} and \ref{3.2}), it is shown the variances of these trace differences are small for \(p \leq c(n)\), which entails
\[||P_s|| \xrightarrow[]{p} \max{(F(\theta),2)}, \hspace{0.5cm}  ||P_s+P_{B,\kappa}|| \leq \max{(F(\theta),f(M))}+\epsilon\]
with high probability (in this order), and the counterpart of (\ref{eq10}) for odd powers gives the result for the largest eigenvalues \(\lambda_1(P_s),\lambda_1(P_s+P_{B,\kappa})\) (subsection~\ref{3.3}). 
\par
As before, \(P_s,P_m,P_{b,\kappa},P_{B,\kappa}\) are obtained from the decomposition
\[p_{ij}=p_{ij}\chi_{|a_{ij}| \leq n^{1/4-\delta_1}}+p_{ij}\chi_{n^{1/4-\delta_1}<|a_{ij}| \leq n^{3/8+\delta_2}}+p_{ij}\chi_{n^{3/8+\delta_2}<|a_{ij}| \leq \kappa \sqrt{n} }+p_{ij}\chi_{\kappa \sqrt{n} \leq |a_{ij}|}.\]

\subsection{Edge Eigenspectrum of \(P_s\)}\label{3.1}

Begin with (\ref{eq20}): in light of \((P_s)_{ij}=(A_s)_{ij}+\theta v_iv_j \chi_{|a_{ij}| \leq n^{1/4-\delta_1}},\) use a similar analysis as for Theorem~\ref{th2}. C3 terms have a non-negative contribution that is at most \(\theta^{2p},\) there are no C1 summands, and it remains to consider the C2 ones. Keep the notation and terminology previously employed: a closer look at subsection \(2.1\) of \cite{oldpaper} reveals the bound in (\ref{traceineq}) (inequality \((15)\) therein) can be refined:

\begin{lemma}\label{lemma20}
Under the assumptions stated above (\ref{traceineq}) and \(L(n) \leq n^{\delta/2},\) (\ref{traceineq}) can be replaced for \(n \geq n(\epsilon), p \leq n^{\delta_4}\) by 
\[C_pn^{p+1} \leq \mathbb{E}[tr(B^{2p})] \leq C_pn^{p+1}+n^{p+1-2\delta}2^{2p}L^2(n)e^8+2^{2p}n^{p}L(n)e^8 \leq (1+\epsilon)C_pn^{p+1}.\] 
\end{lemma}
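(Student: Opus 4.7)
My plan is to refine the tuple-based counting underlying (\ref{trace}) (Section 2.1 of \cite{oldpaper}) by isolating the contribution of the Wigner-type tuple $(n_1,\dots,n_p)=(p,0,\dots,0)$, which supplies the main term $C_p n^{p+1}$, and then showing that all other tuples aggregate into the two claimed error terms.

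First, I would specialize Steps 1--5 of the counting procedure to the single tuple $(p,0,\dots,0)$. For such cycles every marked vertex has multiplicity $1$, so the dominant configuration has $|V(\mathbf{i})|=p+1$ (with $i_0$ unmarked), each unmarked edge retraces its unique marked copy, and there is no further choice in Step 4. The count reduces to $C_p$ Dyck paths times $n(n-1)\cdots(n-p)$ vertex assignments, with each such cycle contributing $(\mathbb{E}[b_{11}^2])^p$ in Step 5. Under the tail assumptions used in the intended application ($B=A_s$), $\mathbb{E}[b_{11}^2]$ is arbitrarily close to $1$ and $p\le n^{\delta_4}$ makes $n(n-1)\cdots(n-p)$ equal to $n^{p+1}(1-o(1))$; combined with the non-negative contributions of all remaining even-cycle tuples (each yielding a non-negative product of moments), this gives the lower bound $C_p n^{p+1}\le\mathbb{E}[tr(B^{2p})]$, and at the same time contributes at most $C_p n^{p+1}$ to the upper bound.

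Next, every remaining tuple satisfies $\sum_k n_k\le p-1$ since some $n_k\ge 1$ with $k\ge 2$. I would partition these into two subfamilies according to how many moment factors $\mathbb{E}[b_{11}^{2k}]$ with $k\ge 2$ they contribute in Step 5. The dominant error comes from tuples contributing two such factors: the $n^{-1}$ saving per fewer distinct vertex in Step 3 combined with the $n^{2\delta\cdot 2}$ produced in Step 5 yields a net power $n^{p+1-2\delta}$, and summing over this subfamily using the Stirling-type book-keeping of (\ref{trace}) gives at most $n^{p+1-2\delta}\,2^{2p}L^2(n)\,e^8$. All remaining tuples (contributing either one or at least three factors of $L(n)$) satisfy $1+\sum_k n_k\le p$, so their vertex contribution is at most $n^p$; the book-keeping identical to the derivation of (\ref{traceineq}) bounds their aggregate by $2^{2p}n^p L(n)\,e^8$.

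Summing the three contributions proves the middle inequality. The final inequality $\le (1+\epsilon)C_p n^{p+1}$ follows from $C_p\sim 4^p/(p^{3/2}\sqrt{\pi})$ together with the hypotheses $L(n)\le n^{\delta/2}$ and $p\le n^{\delta_4}$ for $\delta_4$ sufficiently small, so that both error terms become $o(C_p n^{p+1})$. The principal technical obstacle is the bookkeeping of the first error subfamily: within the proof of (\ref{trace}) the factor $L(n)$ is pulled out uniformly over all tuples, so here one must peel off precisely those tuples responsible for the ``$L^2(n)$'' behaviour without double-counting against the second subfamily, and verify that the accompanying power of $n$ is exactly $p+1-2\delta$ rather than a stronger or weaker power of $n^{\delta}$.
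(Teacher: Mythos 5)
The paper's proof decomposes cycles by $E$, the number of distinct undirected edges of multiplicity at least $4$, whereas you decompose first by the tuple $(n_1,\dots,n_p)$ and only then by how many moment factors $\mathbb{E}[b_{11}^{2k}]$, $k\geq 2$, appear. These are genuinely different organisational principles, and the difference matters because a tuple does not determine $E$: different cycles with the same tuple can contribute different numbers of high-moment factors.

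Three concrete gaps arise from this. First, your partition of "remaining tuples" into two subfamilies (``two factors'' versus ``one or at least three factors'') omits cycles contributing \emph{zero} such factors, i.e.\ $E=0$ but tuple $\neq(p,0,\dots,0)$. Such cycles exist (all edge multiplicities equal $2$ while some vertex is marked twice) and must go into the $2^{2p}n^pL(n)e^8$ bucket. Second, cycles with tuple $(p,0,\dots,0)$ that are not in $\mathcal{C}(p)$ (i.e.\ $i_0$ marked, $|V(\mathbf{i})|=p$, the distinct edges form a unicyclic graph rather than a tree) are dropped entirely: they are not part of your $C_p\,n(n-1)\cdots(n-p)$ count and they are not ``remaining tuples''. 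Third, and most importantly, your estimate for the ``one or at least three factors'' subfamily uses the inequality $1+\sum_k n_k\le p$, valid only when the tuple differs from $(p,0,\dots,0)$. You therefore silently rely on the claim that $E\geq1$ forces the tuple away from $(p,0,\dots,0)$. This claim is in fact true but is not obvious — it requires examining the crossing pattern of a high-multiplicity edge in the tree case — and you give no argument for it. The paper circumvents the issue by an unrelated and sharper observation: if $E=1$ and inequality (\ref{eq14}) is an equality, then $i_0\in N_{\mathbf{i}}(1)$, which already drops a factor of $n$ in Step $3'$ regardless of what the tuple is; when (\ref{eq14}) is strict, the estimate in Step $5$ improves by $n^{-2\delta}$ and the contribution is absorbed into the $E\geq 2$ term. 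That dichotomy (strict vs.\ equality in (\ref{eq14})) is the key technical step generating the third term of the upper bound, and it is absent from your plan.

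A smaller issue: your heuristic for the $E=2$ error term, ``the $n^{-1}$ saving per fewer distinct vertex in Step 3 combined with the $n^{2\delta\cdot 2}$ produced in Step 5 yields $n^{p+1-2\delta}$'', does not track. The term $n^{p+1-2\delta}2^{2p}L^2(n)e^8$ comes solely from the moment side: using (\ref{eq14}), the product $\prod_e L(n)n^{\delta(2k(e)-4)}$ improves the bound of (\ref{traceineq}) by a factor $L(n)n^{-2\delta}$ when $E\ge 2$, and no vertex saving is invoked. Also note that folding $E\geq 3$ together with $E=1$ into the $n^pL(n)$ bucket, rather than into the $n^{p+1-2\delta}L^2(n)$ bucket as the paper does, requires explicitly invoking $L(n)\leq n^{2\delta}$ to control $L(n)^{E-1}n^{2\delta(1-E)}$; you should state this.
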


\begin{proof}
Recall (\ref{eq14}):
\[\sum_{e,k(e) \geq 2}{k(e)} \leq E+1+\sum_{k \geq 2}{kn_k},\]
which in step \(5\) (subsection \(2.1,\) \cite{oldpaper}) yields
\[\mathbb{E}[b_{\mathbf{i}}] \leq \prod_{e \in \mathbf{i}, k(e) \geq 2}{L(n)n^{\delta(2k(e)-4)}}=(L(n))^{E}n^{2\delta(\sum_{k(e) \geq 2}{k(e)}-2E)} \leq\]
\[\leq (L(n))^{E}n^{2\delta(1+\sum_{k \geq 2}{kn_k}-E)} \leq L(n)n^{2\delta\sum_{k \geq 2}{kn_k}},\]
since when \(E=0, \mathbb{E}[b_{\mathbf{i}}] \leq 1,\) and \(L(n) \leq n^{2\delta}\) for \(E \geq 1.\)
\par
Consider the upper bound. If \(E \geq 2,\) then a factor of \(n^{-2\delta}L(n)\) can be added to the bound in (\ref{traceineq}), yielding at most \(n^{p+1-2\delta}2^{2p}L^2(n)e^8;\) \(E=0\) gives \(C_pn^{p+1}\) (these are the elements of \(\mathcal{C}(p)\) and \(|\mathcal{C}(p)|=C_p\)); if \(E=1,\) then cycles with (\ref{eq14}) strict are included in the first case. Suppose equality holds: then \(i_0 \in N_{\mathbf{i}}(1).\) To see this, for each \(e, k(e) \geq 2,\) either one of its endpoints is marked at most once, in which case all its marked copies have the other endpoint as a right endpoint apart from at most one, or both are marked exactly once. If all edges are in the former category, the inequality is strict (\(1\) can be dropped), while the latter scenario can occur only if \(e=i_0i_1, i_0 \in N_{\mathbf{i}}(1):\) let \((u,v)=(i_t,i_{t+1})\) be the first apparition of \(e=uv, u, v  \in N_{\mathbf{i}}(1);\) then \(t=0\) (otherwise, \(u\) is marked at least twice, for a copy of \(e\) and \((i_{t-1},i_t)\)). Since \(i_0\) is marked, a factor of \(n\) can be dropped, yielding at most \(2^{2p}n^pL(n)e^8.\) Lastly, the lower bound ensues from the contribution of the elements in \(\mathcal{C}(p).\)   
\end{proof}

\par
Keep the notation from subsection~\ref{7.3} for C2 terms: anew solely cycles whose unerased edges can be arranged in pairwise disjoint even cycles contribute to the trace. Lemma~\ref{lemma20} entails each generates a factor of at most constant order (in \(p, \theta:\) no dependence on \(n\)) because all have at least one fixed vertex. Intuitively, cycles with \(r_i \ne l_{i+1}\) for some \(1 \leq i \leq s, l_{s+1}:=l_1\) should not contribute first order terms as one of the disjoint components has two vertices fixed and thus another factor of \(n\) could be dropped. Before making this statement quantitative, consider the case \(r_i=l_{i+1}, 1 \leq i \leq s\) with any two of the distinct segments sharing no edge (i.e., there are \(s\) disjoint even cycles): they contribute, up to a factor of \((1+\epsilon)^p\) and polynomials in \(p,\) plus an additive error of size \(n^{-1}(\max{(\theta,1)})^{2p}C(p),\)
\begin{equation}\label{mainsum}
    s_1(\theta,p)=\sum_{L_1, L_2, \hspace{0.05cm} ... \hspace{0.05cm},L_s \geq 1, L_1+...+L_s \leq p-s/2}{\binom{2p-2L_1-..-2L_s-1}{s-1}\theta^{2p-2L_1-..-2L_s}C_{L_1}C_{L_2}...C_{L_s}}:
\end{equation}
suppose these disjoint cycles have lengths \((2L_i)_{1\leq i \leq s};\) once \((L_i)_{1\leq i \leq s}\) are selected, choose the first vertex (at most \(2p\) possibilities) and assign the number of edges of the deleted segments, which gives the binomial coefficient (the upper bound \(p-s/2\) comes from each unerased segment containing at least an edge).
\par
Consider now the remaining cycles: suppose \(2l\) of its edges are not erased, form \(s\) segments, fix \(l,s,\) denote by \(\sigma(s,l)\) the overall contribution of such cycles for the erased clusters having fixed lengths and 
\[\Sigma(s,l)=\sigma(s,l)+\theta^{2p-2l}\sum_{L_1, L_2, \hspace{0.05cm} ... \hspace{0.05cm},L_s \geq 1, L_1+...+L_s=l}{C_{L_1}C_{L_2}...C_{L_s}}:\]
the difference \(\Sigma(s,l)-\sigma(s,l)\) accounts for the cycles described above for a given distribution of lengths of the erased edges. Note \(\Sigma(s,l)-\sigma(s,l)\) does not depend on the latter. If \(r_i \ne l_{i+1}\) for some \(1 \leq i \leq s,\) then glue the first such segment to the next one adjacent to it: this yields at most \(n^{-1} \cdot p^2 \cdot (2p)^3 \Sigma(s-1,l)\) because there are at most \(p^2\) configurations of erased clusters (two of the initial ones are merged), the even cycle containing this glued segment can drop a factor an \(n^{-1},\) and any preimage of this map has size at most \((2p)^3\) (choose where in the cycle to split a segment into two, the index of the first segment it is pulled to, and the first vertex of the configuration). Otherwise, \(r_i=l_{i+1}\) for all \(1 \leq i \leq s:\) if there are two segments sharing an edge, merge them, yielding a factor of \(n^{-1} \cdot n^{1/2-2\delta_1} \leq n^{-1/2}\)
and the overall contribution is \(p^2 \cdot (2p)^3 \cdot n^{-1/2}\Sigma(s-1,l).\) Putting these two situations together yields
\[\sigma(s,l) \leq 16p^5n^{-1}\Sigma(s-1,l)= 16p^5n^{-1}\sigma(s-1,l)+16p^5n^{-1}(\Sigma(s-1,l)-\sigma(s-1,l)).\]
Since \(\sigma(1,l) \leq 2\theta^{2p-2l}C_l,\)
\[\sigma(s,l) \leq (16p^5n^{-1})^{s-1} \theta^{2p-2l} C_l+32p^5n^{-1}(\Sigma(s,l)-\sigma(s,l))\]
(the difference is increasing in \(s\) and \(p^5n^{-1} \leq \frac{1}{2}\)), completing (\ref{eq20}) as \(\binom{2p-2l-1}{s-1}(16p^5n^{-1})^{s-1} \leq (32p^6n^{-1})^{s-1} \leq 1.\)
\par
Lemmas~\ref{lemma4}-\ref{lemma79} justify the claims about \(F\) made in the theorem statement and the ensuing remark, completing this subsection (\(l=\sum_{1 \leq i \leq s}{L_i}=px,s=py\) in Lemma~\ref{lemma79}).

\begin{lemma}\label{lemma4}
For \(l,s \in \mathbb{N}, l \geq s,\)
\[\sum_{l_1, l_2, \hspace{0.05cm} ... \hspace{0.05cm}, l_s \geq 1, l_1+...+l_s =l}{C_{l_1}C_{l_2}...C_{l_s}}=\sum_{l_1, l_2, \hspace{0.05cm} ... \hspace{0.05cm},l_{2s} \geq 0, l_1+...+l_{2s}=l-s}{C_{l_1}C_{l_2}...C_{l_{2s}}}.\]
\end{lemma}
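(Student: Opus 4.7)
The plan is to recognize both sides as coefficients in a suitable power of the Catalan generating function. Let $c(x) = \sum_{l \geq 0} C_l x^l$ denote the ordinary generating function of the Catalan numbers; since $C_0 = 1$, the series $c(x) - 1 = \sum_{l \geq 1} C_l x^l$ enumerates nonempty blocks. Then the left-hand side is precisely the coefficient of $x^l$ in $(c(x)-1)^s$, because multiplying out the product restricts the exponents to $l_i \geq 1$ and their sum to $l$. Similarly, the right-hand side is the coefficient of $x^{l-s}$ in $c(x)^{2s}$, i.e. the coefficient of $x^l$ in $x^s c(x)^{2s}$.

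Therefore the identity reduces to the formal power series equality
\[(c(x) - 1)^s = x^s c(x)^{2s},\]
which is equivalent to $c(x) - 1 = x c(x)^2$. This is nothing but the classical functional equation for the Catalan generating function (each nonempty Dyck path decomposes uniquely as an up-step, a Dyck subpath, a down-step, and another Dyck subpath). Raising it to the $s$-th power and extracting coefficients of $x^l$ yields the claim.

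I expect no real obstacle: the entire content lies in matching the two combinatorial sums to the generating-function identities above, and the Catalan functional equation does the rest. If a purely combinatorial justification were preferred, one could instead give a bijection: encode an ordered tuple $(l_1, \ldots, l_s)$ with $l_i \geq 1$ and a choice of Dyck path of length $2l_i$ for each $i$ by first decomposing each nonempty Dyck path as $U\, D_i\, D\, D_i'$ (where $U, D$ are the matched first up-step and its partner down-step, and $D_i, D_i'$ are Dyck subpaths, possibly empty), then reading off the ordered list $(D_1, D_1', D_2, D_2', \ldots, D_s, D_s')$ of $2s$ Dyck paths with total length $2(l-s)$; but the generating-function proof above is shorter and cleaner.
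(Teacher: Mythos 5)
Your proof is correct and uses essentially the same idea as the paper: interpreting both sides as coefficients in powers of the Catalan generating function and invoking the functional equation $c(x)-1 = x\,c(x)^2$. The paper extracts the coefficient via the Leibniz formula applied to $\partial^l(x^s c^{2s}(x))/\partial x^l$ at $x=0$, whereas you simply cancel $x^s$ and read off the coefficient of $x^{l-s}$ directly, which is a touch cleaner but not a different argument.
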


\begin{proof}
Use the generating function of the Catalan numbers:
\[c(x)=\sum_{l \geq 0}{C_lx^l}=\frac{1-\sqrt{1-4x}}{2x},\]
for which \(c(x)=1+xc^2(x).\) The left-hand side term above is \(\frac{1}{l!} \cdot \frac{\partial^l{(c(x)-1)^s}}{\partial{x^l}}|_{x=0},\) and
Leibniz formula yields
\[\frac{1}{l!} \cdot \frac{\partial^l{(c(x)-1)^s}}{\partial{x^l}}=\frac{1}{l!} \cdot \frac{\partial^l{(x^sc^{2s}(x))}}{\partial{x^l}}=\frac{1}{l!} \sum_{0 \leq k \leq s}{\binom{l}{k} s(s-1)...(s-k+1)x^{s-k} \cdot  \frac{\partial^{l-k}{(c^{2s}(x))}}{\partial{x^{l-k}}}},\]
whereby the sum of interest is
\[\frac{1}{l!} \cdot \binom{l}{s} s! \frac{\partial^{l-s}{(c^{2s}(x))}}{\partial{x^{l-s}}}|_{x=0}=\frac{1}{(l-s)!} \cdot \frac{\partial^{l-s}{(c^{2s}(x))}}{\partial{x^{l-s}}}|_{x=0}.\]
\end{proof}

\begin{lemma}\label{lemma78}
Let \(s, l \in \mathbb{N}, r:\mathbb{N} \to (0,\infty),\)
\[\sigma(l,s)=\sum_{l_1, l_2, \hspace{0.05cm} ... \hspace{0.05cm},l_{s} \geq 0, l_1+...+l_{s}=l}{C_{l_1}C_{l_2}...C_{l_{s}}}, \hspace{0.5cm} r(l)=\sum_{i \geq l}{\frac{C_i}{4^i}}.\]
Then for \(r(0)=0,\)
\[\sigma(l,s) \leq 2^{-s}C_{l+s}\prod_{0 \leq i \leq s-1}{(1+\frac{r(i)}{2})},\]
and for \(l>s,\)
\[\sigma(l,s) \geq \frac{C_l}{4} \cdot ((\frac{5}{4})^s-1).\]
\end{lemma}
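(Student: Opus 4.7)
The plan is to prove both inequalities by induction on $s$, leveraging two elementary Catalan identities: the convolution $\sum_{k=0}^{n}C_kC_{n-k}=C_{n+1}$, and the ratio bound
\[\frac{C_{n+1}}{C_n}=\frac{2(2n+1)}{n+2}\leq 4,\]
which iterates to $C_{l-k}\geq C_l/4^k$ and $C_{l+k}\leq 4^kC_l$. The recursion $\sigma(l,s)=\sum_{l_1=0}^{l}C_{l_1}\sigma(l-l_1,s-1)$ drives each step.

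For the upper bound, substituting the inductive hypothesis into this recursion (with $r(0)$ taken to be its natural value $c(1/4)=2$) reduces the step to
\[\sum_{k=0}^{l}C_kC_{l+s-k}\leq \tfrac{1}{2}C_{l+s+1}\Bigl(1+\tfrac{r(s)}{2}\Bigr).\]
I would apply the convolution identity to $\sum_{k=0}^{l+s}C_kC_{l+s-k}=C_{l+s+1}$ and reindex the tail to rewrite the left-hand side as $C_{l+s+1}-\sum_{j=0}^{s-1}C_jC_{l+s-j}$. Since $\tfrac{1}{2}-\tfrac{r(s)}{4}=\tfrac{1}{4}\sum_{j=0}^{s-1}C_j/4^j$, the inequality becomes $\sum_{j=0}^{s-1}C_j(C_{l+s-j}-C_{l+s+1}/4^{j+1})\geq 0$, which holds term-by-term because $j+1$ applications of the ratio bound yield $C_{l+s+1}\leq 4^{j+1}C_{l+s-j}$. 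The base $s=1$ reduces to $C_l\leq C_{l+1}$, immediate from monotonicity of the Catalan sequence.

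For the lower bound, I would split $\sigma(l,s)=\sum_{l_1=0}^{l-s}C_{l_1}\sigma(l-l_1,s-1)+\sum_{l_1=l-s+1}^{l}C_{l_1}\sigma(l-l_1,s-1)$ so that the inductive hypothesis applies on the first piece (where $l-l_1>s-1$). Combining the hypothesis with $C_{l-l_1}\geq C_l/4^{l_1}$ produces a factor
\[\sum_{l_1=0}^{l-s}\frac{C_{l_1}}{4^{l_1}}=r(0)-r(l-s+1)\geq 2-r(2)=\tfrac{5}{4},\]
using monotonicity of $r$ and $r(2)=2-C_0-C_1/4=\tfrac{3}{4}$. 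The single term $l_1=l$ in the second piece contributes $C_l\,\sigma(0,s-1)=C_l$. Assembling yields $\sigma(l,s)\geq (C_l/4)((\tfrac{5}{4})^s-\tfrac{5}{4})+C_l=(C_l/4)((\tfrac{5}{4})^s+\tfrac{11}{4})$, comfortably above the target $(C_l/4)((\tfrac{5}{4})^s-1)$; the base $s=1$, $l>1$, is trivial since $\sigma(l,1)=C_l\geq C_l/16$.

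The only delicate point is bookkeeping at the boundaries: the inductive hypothesis requires $l>s$ throughout (matching the constraint in the statement), the constant $\tfrac{5}{4}$ in the lower bound is forced by the two smallest terms $C_0+C_1/4$ of $c(1/4)$, and the factor $1+r(i)/2$ in the upper bound is exactly what captures the slack between $\sum_{k=0}^{l}C_kC_{l+s-k}$ and $\tfrac{1}{2}C_{l+s+1}$ as $l\to\infty$.
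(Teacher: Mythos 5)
Your lower bound is correct but goes a genuinely different way than the paper. The paper avoids induction entirely: it restricts the defining sum of $\sigma(l,s)$ to tuples with one coordinate equal to $l-k$, $k$ coordinates equal to $1$, and $s-1-k$ coordinates equal to $0$, giving $\sigma(l,s)\geq\sum_{0\leq k\leq s-1}\binom{s}{k+1}(k+1)C_{l-k}\geq C_l\sum_{0\leq k\leq s-1}\binom{s}{k+1}4^{-k}$, and then evaluates that binomial sum. Your induction --- splitting the recursion at $l_1=l-s$ to keep the hypothesis in range, peeling off the $l_1=l$ term, and observing that $\sum_{0\leq l_1\leq l-s}C_{l_1}4^{-l_1}=2-r(l-s+1)\geq 2-r(2)=\tfrac{5}{4}$ is the source of the constant $\tfrac{5}{4}$ --- is more systematic and even delivers the strictly stronger bound $\tfrac{C_l}{4}\bigl((\tfrac{5}{4})^s+\tfrac{11}{4}\bigr)$.

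Your upper bound inductive step coincides with the paper's (same recursion, same convolution identity $\sum_j C_jC_{n-j}=C_{n+1}$, same ratio bound), and your reduction to $\sum_{j=0}^{s-1}C_j\bigl(C_{l+s-j}-C_{l+s+1}4^{-(j+1)}\bigr)\geq 0$ is sound. The problem is the base case: you explicitly take $r(0)$ to be its ``natural'' value $2$, but the lemma stipulates the convention $r(0)=0$, which makes the $i=0$ factor in the product equal to $1$ rather than $2$. The base $s=1$ should therefore be $\sigma(l,1)=C_l\leq\tfrac12 C_{l+1}$, not $C_l\leq C_{l+1}$; the former holds for $l\geq 1$ because $C_{l+1}/C_l=\tfrac{2(2l+1)}{l+2}\geq 2$ there, which is exactly what the paper checks. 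As written, you prove the bound $2^{-s+1}C_{l+s}\prod_{1\leq i\leq s-1}(1+r(i)/2)$, twice the stated one; the inductive step is unaffected (it only references $r(s)$ for $s\geq 1$), so only the base case needs to be tightened.
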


\begin{proof}
Begin with the upper bound: use induction on \(s\) and
\begin{equation}\label{recurr}
    \sigma(l,s)=\sum_{0 \leq i \leq l}{C_i\sigma(l-i,s-1)}.
\end{equation}
Clearly,
\[\sigma(l,1)=C_l=C_{l+1} \cdot \frac{l+2}{4l+2} \leq \frac{C_{l+1}}{2};\] 
for \(s \geq 2,\) the induction hypothesis and (\ref{recurr}) yield
\[\sigma(l,s) \leq 2^{-s+1}\prod_{0 \leq i \leq s-2}{(1+\frac{r(i)}{2})}\sum_{0 \leq i \leq l}{C_iC_{l-i+s-1}}=2^{-s+1}\prod_{0 \leq i \leq s-2}{(1+\frac{r(i)}{2})}(C_{l+s}-\sum_{l+1 \leq i \leq l+s-1}{C_iC_{l-i+s-1}}).\]
Since \(C_{l+1} \leq 4C_l,\) \(C_{l} \geq C_{l+k}4^{-k}\) for \(k \geq 0,\) from which 
\[\sigma(l,s) \leq 2^{-s+1}\prod_{0 \leq i \leq s-2}{(1+\frac{r(i)}{2})}(C_{l+s}-C_{l+s}\sum_{0 \leq i \leq s-2}{C_i4^{-i-1}})=\]
\[=2^{-s+1}C_{l+s}\prod_{0 \leq i \leq s-2}{(1+\frac{r(i)}{2})}(1-\frac{1}{2}+\frac{r(s-1)}{4})=2^{-s}C_{l+s}\prod_{0 \leq i \leq s-1}{(1+\frac{r(i)}{2})}\]
as \(\sum_{i \leq 0}{\frac{C_i}{4^i}}=c(\frac{1}{4})=2.\)
\par
For the lower bound, note 
\[\sigma(l,s) \geq \sum_{0 \leq k \leq s-1}{\binom{s}{k+1}(k+1)C_{l-k}} \geq C_l\sum_{0 \leq k \leq s-1}{\binom{s}{k+1}4^{-k}}=\frac{C_l}{4} \cdot ((\frac{5}{4})^s-1)\]
by taking tuples with one entry \(l-k,\) \(k\) one, \(s-1-k\) zero for \(0 \leq k \leq s-1.\)

\end{proof}

Lemmas~\ref{lemma4} and \ref{lemma78} yield that for any \(\epsilon>0,\)
\[\frac{C_{l-s}}{4} \cdot ((\frac{5}{4})^{2s}-1) \leq \sum_{l_1, l_2, \hspace{0.05cm} ... \hspace{0.05cm}, l_s \geq 1, l_1+...+l_s =l}{C_{l_1}C_{l_2}...C_{l_s}} \leq 2^{-2s}C_{l+s}(1+\epsilon)^{s}C(\epsilon)\]
as \(\lim_{t \to \infty}{r(t)}=0,\) where the lower bound holds solely for \(l>3s.\) The next result completes the stated claims.

\begin{lemma}\label{lemma79}
Let \(D_1=\{(x,y) \in [0,1]^3: y \leq x, y \leq 2-2x\},D_2=\{(x,y) \in [0,1]^3: y \leq x/3, y \leq 2-2x\}\), and \(\newline h_1:D_1 \to \mathbb{R}, h_2:D_2 \to \mathbb{R},\)
\[h_1(x,y)=\theta^{2-2x} \cdot \frac{(2-2x)^{2-2x}}{y^y \cdot (2-2x-y)^{2-2x-y}} \cdot 4^x,\]
\[h_2(x,y)=\theta^{2-2x} \cdot \frac{(2-2x)^{2-2x}}{y^y \cdot (2-2x-y)^{2-2x-y}} \cdot 4^{x-y} \cdot (\frac{5}{4})^{2y},\]
where \(0^0:=\lim_{x \to 0+}{x^x}=1.\) Then
\[\sup_{(x,y) \in D_1}{h_1(x,y)}=\max{(4,G_1^2(\theta))}, \hspace{0.5cm} \sup_{(x,y) \in D_2}{h_2(x,y)}=\max{(4,G_2^2(\theta)}),\]
where \(G_1,G_2:(0,\infty) \to (0,\infty),\) 
\[G_1(y)=y \cdot \frac{2-2x}{2-3x}, \hspace{0.5cm} x=x(y) \in (0,2/3), \hspace{0.5cm} \frac{(2-3x)^3}{x(1-x)^2}=y^2,\]
\[G_2(y)=y \cdot \frac{2-2x}{2-7x}, \hspace{0.5cm} x=x(y) \in (0,6/7), \hspace{0.5cm} \frac{(6-7x)^{7/3}}{x^{1/3}(1-x)^2}=\frac{36y^2}{5^{2/3}}.\]
Moreover,
\[\begin{cases}
    G_1(\theta)< f(\theta), & \theta < 1 \\
    G_1(\theta)> f(\theta), & \theta > 1 
    \end{cases}, \hspace{0.5cm} G_2(\theta) \leq 2 \Longleftrightarrow \theta \leq \frac{128}{89}, \hspace{0.5cm} G_2(\theta)<f(\theta).\]
\end{lemma}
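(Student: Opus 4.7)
The plan is to treat the two suprema as calculus problems on compact triangular domains, reducing each to a one-variable optimization on an edge, and then handle the inequalities $G_1 \lessgtr f$, $G_2 \leq 2$, $G_2 < f$ by combining the implicit equations defining $G_1, G_2$ with algebraic manipulation and monotonicity.

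First I would compute $\partial_y \log h_1 = \log((2-2x-y)/y)$, which forces $y = 1-x$ at an interior critical point, and $\partial_x \log h_1 = 2\log(2(2-2x-y)/(\theta(2-2x)))$, which combined with $y=1-x$ forces $(1-x)(2-\theta) = 1-x$, hence $\theta=1$ (or the boundary $x=1$). So for $\theta \neq 1$ there are no interior critical points and $\sup_{D_1} h_1$ is attained on the boundary. On the edge $y=0$ (and on $y=2-2x$, where the singular factor $0^0$ gives $1$) the function reduces to $\theta^{2-2x} 4^x$ on $[0,1]$, whose supremum is $\max(\theta^2, 4)$. On the edge $y = x$, $x \in [0,2/3]$, setting $\frac{d}{dx}\log h_1(x,x) = 0$ yields exactly
\[
\theta^2 \;=\; \frac{(2-3x)^3}{x(1-x)^2},
\]
which is the implicit equation defining $G_1(\theta)$. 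A direct substitution shows $\log h_1(x,x) - \log G_1^2(\theta) = x\cdot[-2\log\theta - 2\log(2-2x) - \log x + 3\log(2-3x) + \log 4]$, whose bracket vanishes at the critical point, so the critical value equals $G_1^2(\theta)$. The analogous computation for $h_2$ on $y = x/3$ produces the critical equation $(6-7x)^{7/3}/(x^{1/3}(1-x)^2) = 36\theta^2/5^{2/3}$, and a completely parallel log-subtraction (with the $\log 3$ and $\log 5$ terms canceling cleanly via $\log 6 = \log 2 + \log 3$) verifies the critical value is $G_2^2(\theta)$. Combining the edges gives $\sup_{D_i} h_i = \max(4, G_i^2(\theta))$ after noting $\theta^2 \leq G_i^2(\theta)$.

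For the comparison part, I would first establish monotonicity. Setting $\phi_1(x) := (2-3x)^3/(x(1-x)^2)$, a short computation gives $\phi_1'/\phi_1 = -2/(x(1-x)(2-3x)) < 0$ on $(0,2/3)$, so $\phi_1$ is a strictly decreasing bijection onto $(0,\infty)$ and $x = x(\theta)$ is strictly decreasing in $\theta$; the same scheme applies to $\phi_2(x) := (6-7x)^{7/3}/(x^{1/3}(1-x)^2)$ on $(0,6/7)$. Then $d\log G_1/dx = -1/(x(2-3x)) < 0$, so $G_1$ is strictly increasing in $\theta$, and an analogous computation gives the monotonicity of $G_2$. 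The benchmark $G_1(1) = 2$ is verified by checking $x = 1/2$ solves $\phi_1(x) = 1$ and then $G_1(1) = 1\cdot 1/(1/2) = 2$. The inequality $G_1(\theta) > \theta + 1/\theta$ for $\theta > 1$ reduces (after clearing denominators) to $\theta^2 x > 2-3x$, and substituting the implicit formula for $\theta^2$ collapses this to $(2-3x)^2 > (1-x)^2$, i.e.\ $x < 1/2$, which by monotonicity of $\phi_1$ is equivalent to $\theta > 1$; for $\theta < 1$ the claim $G_1(\theta) < 2 = f(\theta)$ follows from $G_1$ increasing and $G_1(1) = 2$.

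For $G_2$, evaluating $G_2 = 2$ along the edge gives $x = 3(2-\theta)/(7-3\theta)$, whence $6-7x = 3\theta/(7-3\theta)$ and $1-x = 1/(7-3\theta)$; plugging into $\phi_2(x) = 36\theta^2/5^{2/3}$ the factors of $(7-3\theta)$ cancel, yielding $9\theta^{7/3}/(2-\theta)^{1/3} = 36\theta^2/5^{2/3}$, i.e.\ $\theta/(2-\theta) = 64/25$, so $\theta = 128/89$; monotonicity then gives $G_2(\theta) \leq 2 \iff \theta \leq 128/89$. For $G_2(\theta) < f(\theta)$ I would split into two regimes: for $\theta \leq 128/89$ we have $G_2(\theta) \leq 2 \leq f(\theta)$ trivially; for $\theta > 128/89$ the inequality reduces algebraically to $x < 6/(\theta^2+7)$, which I would verify by combining it with $\phi_2(x) = 36\theta^2/5^{2/3}$ and showing that the resulting one-variable inequality in $x$ holds throughout $(0, 3(2-\theta_0)/(7-3\theta_0))$ where $\theta_0 = 128/89$, using the monotonicity of both sides. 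The main obstacle I anticipate is precisely this last step: the algebraic identity needed to force $x(\theta^2+7) < 6$ from the implicit definition of $G_2$ is not transparent and will likely require an auxiliary monotonicity or convexity argument in $x$, as it cannot be massaged into a simple polynomial comparison as cleanly as the $G_1$ case.
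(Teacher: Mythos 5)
Your approach to $h_1$ is essentially the paper's, just organized differently: the paper first maximizes in $y$ for fixed $x$ (getting $y=1-x$ for $x\ge 1/2$ and $y=\min(x,2-2x)$ for $x\le 1/2$) and then maximizes in $x$, whereas you observe directly that for $\theta\ne 1$ there are no interior critical points and reduce to the three edges. Both routes lead to the same critical equation $\theta^2=(2-3x)^3/(x(1-x)^2)$ on $y=x$, and your log-subtraction identity for showing $h_1(x_0,x_0)=G_1^2(\theta)$ is a fine substitute for the paper's direct simplification. Your comparison argument for $G_1\lessgtr f$ via $\theta^2 x\gtrless 2-3x\Leftrightarrow(2-3x)^2\gtrless(1-x)^2\Leftrightarrow x\lessgtr 1/2$ is exactly the paper's.

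Two points deserve attention. First, your treatment of $h_2$ is thinner than it needs to be: the edges $y=0$ and $y=2-2x$ of $D_2$ are not the same as for $h_1$ because of the extra factor $4^{-y}(5/4)^{2y}$, and there genuinely is an interior critical set when $\theta=128/89$ (where $\partial_x h_2=0$ forces $y=(2-\theta)(1-x)$ and $\partial_y h_2=0$ forces $y=\tfrac{50}{89}(1-x)$, compatible iff $\theta=128/89$). The paper handles this by a five-case analysis: on $y=0$ one gets $\max(\theta^2,4)$; on $y=x/3$ the critical equation for $G_2$; near $y=2-2x$ the derivative $\partial_y\log h_2\to-\infty$ so the max is never there; and on each one-parameter locus $\partial_x h_2=0$ or $\partial_y h_2=0$, $h_2$ reduces to a monotone one-variable function whose extrema push back to the other edges. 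You would need at least this much to rule out the remaining boundary/critical contributions before concluding $\sup h_2=\max(4,G_2^2(\theta))$.

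Second, your anticipated obstacle in the final step $G_2(\theta)<f(\theta)$ does not materialize. As you set it up, by the strict monotonicity of $\phi_2(x)=(6-7x)^{7/3}/(x^{1/3}(1-x)^2)$ the target $x_1<\tfrac{6}{\theta^2+7}$ is equivalent to $\phi_2\!\left(\tfrac{6}{\theta^2+7}\right)<\phi_2(x_1)=\tfrac{36\theta^2}{5^{2/3}}$. Substituting $x=\tfrac{6}{\theta^2+7}$ gives $6-7x=\tfrac{6\theta^2}{\theta^2+7}$, $1-x=\tfrac{1+\theta^2}{\theta^2+7}$, and the powers of $(\theta^2+7)$ cancel ($-7/3+1/3+2=0$), leaving
\[
\phi_2\!\left(\tfrac{6}{\theta^2+7}\right)=\frac{36\,\theta^{14/3}}{(1+\theta^2)^2},
\]
so the desired inequality is $\theta^{8/3}/(1+\theta^2)^2<5^{-2/3}$, equivalently $\theta^4/(1+\theta^2)^3<1/5$, which is the explicit polynomial inequality $(1+u)^3-5u^2=1+u((u-1)^2+2)>0$ with $u=\theta^2$. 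No auxiliary convexity argument is needed beyond $\phi_2'<0$, which you already established.
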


\begin{proof}
Since \(\frac{1}{h_1} \cdot \frac{\partial{h_1}}{\partial{y}}=\log{\frac{2-2x-y}{y}},\) the maximum is attained at \(y=1-x\) if \(x \geq \frac{1}{2}\) or \(y=\min{(x,2-2x)}\) if \(x \leq \frac{1}{2}:\) 
\begin{equation}\label{max1}
    \sup_{x \geq \frac{1}{2}}{h_1(x,1-x)}=\max{(2\theta,4)},
\end{equation}
\begin{equation}\label{max2}
    \sup_{x \geq \frac{2}{3}}{h_1(x,2-2x)}=\max{(4^{2/3}\theta^{2/3},4)},
\end{equation}
while for \(x \leq \frac{2}{3},\)
\[h_1(x,x)=\theta^{2-2x} \cdot \frac{(2-2x)^{2-2x}}{x^x \cdot (2-3x)^{2-3x}} \cdot 4^x,\]
with the critical point \(x_0<\frac{2}{3},\)
\[\frac{(2-3x_0)^3}{x_0(1-x_0)^2}=\theta^2,\]
giving the maximum \(h_1(x_0,x_0)=\theta^2 \cdot (\frac{2-2x_0}{2-3x_0})^2\) (\(g(x)=\log{\frac{(2-3x)^3}{x(1-x)^2}}\) is decreasing in \(x \in (0,\frac{2}{3})\) as \[g'(x)=-\frac{9}{2-3x}-\frac{1}{x}+\frac{2}{1-x}=\frac{3x-5}{(1-x)(2-3x)}-\frac{1}{x}<0,\]
and \(\lim_{x \to 0+}{g(x)}=\infty, \lim_{x \to \frac{2}{3}}{g(x)}=-\infty\)). The suprema in (\ref{max1}) and (\ref{max2}) are at most \(\max{(\theta^2,4)}\) and \(\newline h_1(x_0,x_0) \geq \theta^2,\) whereby
\[\sup_{(x,y) \in D}{h_1(x,y)}=\max{(4,\theta^2 \cdot (\frac{2-2x_0}{2-3x_0})^2)}.\]
Let us now justify the stated properties for \(G_1:\) \(4 \geq \theta^2 \cdot (\frac{2-2x_0}{2-3x_0})^2\) is equivalent to \(\theta<2, x_0 \leq \frac{2-\theta}{3-\theta},\)
\[\frac{(2-3 \cdot \frac{2-\theta}{3-\theta})^3}{\frac{2-\theta}{3-\theta} \cdot (1-\frac{2-\theta}{3-\theta})^2}=\frac{\theta^3}{2-\theta} \leq \theta^2\]
or \(\theta \leq 1.\) If \(\theta>1,\) then \(G_1(\theta)> f(\theta)\) because \(\frac{2-2x_0}{2-3x_0}>1+\frac{1}{\theta^2}\) or \(x_0>\frac{2}{3+\theta^2}:=t, \frac{1}{\theta^2}=\frac{t}{2-3t}\) from \(t < \frac{1}{2},\)
\[\frac{(2-3t)^3}{t(1-t)^2\theta^2}=\frac{(2-3t)^2}{(1-t)^2} > 1.\]
\par
Proceed now with the second function: note
\[\frac{1}{h_2} \cdot \frac{\partial{h_2}}{\partial{x}}=2\log{\frac{2-2x-y}{\theta(1-x)}}, \hspace{0.5cm} \frac{1}{h_2} \cdot \frac{\partial{h_2}}{\partial{y}}=\log{\frac{(2-2x-y)(5/8)^{2}}{y}}.\]
The maximum is attained on the boundary, i.e., \(\{y=0\} \cup \{y=x/3\} \cup \{y=2-2x\},\) or at a point at which a partial derivative vanishes. 
\par
\underline{Case 1:} \(y=0:\max_{x \in [0,1]}{h_2(x,0)}=\max{(\theta^2,4)}.\)
\par
\underline{Case 2:} \(y=x/3:\)
\[h_2(x,x/3)=\theta^{2-2x} \cdot \frac{(2-2x)^{2-2x}}{(x/3)^{x/3}(2-7x/3)^{2-7x/3}} \cdot 5^{2x/3}  :=h_3(x).\]
As before, the maximum is
\[\theta^2 \cdot \frac{(2-2x_1)^2}{(2-7x_1/3)^2},\] 
attained at the critical point \(x_1 \in (0,\frac{6}{7})\) with
\[\frac{(6-7x_1)^{7/3}}{x_1^{1/3}(1-x_1)^2}=\frac{36\theta^2}{5^{2/3}}.\]
\par
\underline{Case 3:} \(y=2-2x, x>\frac{6}{7}:\) for \(\epsilon>0\) sufficiently small, \(\frac{\partial{h_2}}{\partial{y}}<0\) for \(y \geq 2-2x-\epsilon,\) and thus, the function is decreasing in \(y,\) implying the maximum cannot have \(y=2-2x\) (the restraint is \(y \leq 2-2x\) for \(x>\frac{6}{7}\)).
\par
\underline{Case 4:} \(\frac{\partial{h_2}}{\partial{x}}=0.\)
This forces \(\theta<2, (1-x)(2-\theta)=y.\) Then
\[h_2(x,y)=\theta^2 \cdot (\frac{2-2x}{2-2x-y})^2 \cdot (\frac{2-2x-y}{y})^y \cdot (\frac{5}{8})^{2y}= 4 \cdot (\frac{\theta}{2-\theta} \cdot \frac{25}{64})^y,\]
for which the supremum is obtained at \(y=0\) or \(y=x/3\) (\(y \leq x/3\) is the remaining constraint), covered by cases \(1\) and \(2.\)
\par
\underline{Case 5:} \(\frac{\partial{h_2}}{\partial{y}}=0.\)
This is tantamount to \(2-2x=y(1+\frac{64}{25}),\) and 
\[h_2(x,y)=\theta^{2-2x} \cdot (\frac{2-2x}{2-2x-y})^{2-2x} \cdot 4^{x}=\theta^{2-2x} \cdot (1+\frac{25}{64})^{2-2x} \cdot 4^{x}.\]
The condition on \(x\) comes from \(y \leq x/3,\) entailing the maximum is attained at \(x=3y\) or \(x=1,y=0,\) both already considered. 
\par
The case-by-case analysis above gives
\[\sup_{(x,y) \in D}{h_2(x,y)}=\max{(4,\theta^2 \cdot (\frac{2-2x_1}{2-7x_1/3})^2)}.\] 
Turn now to \(G_2:\) reasoning as for \(h_1\) yields \(4 \geq \theta^2 \cdot \frac{(2-2x_1)^2}{(2-7x_1/3)^2}\) equivalent to \(\theta<2, x_1 \leq \frac{2-\theta}{7/3-\theta},\) becoming
\[\frac{\theta^{7/3}}{(2-\theta)^{1/3}(1/3)^2} \leq \frac{36\theta^2}{5^{2/3}},\]
or \(\frac{\theta}{2-\theta} \leq \frac{64}{25}, \theta \leq \frac{128}{89}.\)
In particular,
\[\sup_{(x,y) \in D}{h_2(x,y)}>4 \Longleftrightarrow \theta>\frac{128}{89},\]
and \(G_2(\theta)<f(\theta)\) for all \(\theta>0:\) \(\frac{2-2x_1}{2-7x_1/3}< 1+\frac{1}{\theta^2}, x_1< \frac{6}{7+\theta^2}\) from
\[\frac{(6\theta^2)^{7/3}}{6^{1/3}(1+\theta^2)^2}=\frac{36\theta^{14/3}}{(1+\theta^2)^2} < \frac{36\theta^2}{5^{2/3}},\]
equivalent to \(\frac{\theta^{4}}{(1+\theta^2)^3} < \frac{1}{5},\) which holds because \((1+x)^3-5x^2=1+x((x-1)^2+2)>0\) when \(x=\theta^2.\)
\end{proof}

\subsection{\(P_s+P_{B,\kappa}\) versus \(P_s\)}\label{3.2}

Having completed the justification of (\ref{eq20}), proceed with (\ref{eq10}). The edges of the cycles in the trace difference belong either to \(P_{B,\kappa}\) or to \(P_s;\) let \(t\) be the number of clusters of the former type and \(2p-l\) their number of edges; by freezing them, the contributions again can be split in C1, C2, and C3 terms. C3 generate \(|\theta^{2p-l}\prod{(v_{l_i}v_{r_i})}| \leq \theta^{2p-l} \cdot ||v||_{\infty}^{2t},\) and overall these give at most
\[2p\sum_{1 \leq l \leq 2p-1, l \geq t}{\binom{2p-l-1}{t-1}\binom{l-1}{t-1}(M+\epsilon)^{2p-l} \cdot \theta^l||v||_{\infty}^{2t}(2m)^t}:\]
let \(l=l_1+l_2+...+l_t,\) where \(l_i\) is the length of the \(i^{th}\) remaining segment; there are \(\binom{(l-t)+t-1}{t-1}\) such tuples, for each, \(\binom{(2p-l-t)+t-1}{t-1}=\binom{2p-l-1}{t-1}\) ways to choose the lengths of the clusters of edges belonging to \(P_{B,\kappa},\) and once these are fixed, there are at most \(2m\) possibilities per cluster as each is predetermined by its first edge and its length (recall property \((c)\) of \(S\)); the expectation is at most \((M+\epsilon)^{2p-l} \cdot \theta^l||v||_{\infty}^{2t}\) since the moments of entries belonging to \(P_{B,\kappa}\) are controlled by powers of \(M+\epsilon.\) Because \(t \geq 1,\) the sum is of order \(2m \cdot ||v||_{\infty} \cdot (c(M,\epsilon,\theta))^p\) for \(2m \cdot ||v||_{\infty} \leq 2\sqrt{||v||_{\infty}} \leq \frac{1}{2}.\)
\par
Continue with the C1 terms: the segments left can be glued into a collection of disjoint even cycles (otherwise the initial expectation is zero). If \(r_i \ne l_{i+1},\) then a power of \(n\) can be saved (one of the cycles will have two fixed vertices). Otherwise, \(r_i=l_{i+1},\) implying all the clusters contain the same undirected edge. Hence the cycle itself is even and the contribution of these terms is at most \(2m \cdot s(M+\epsilon,p).\)
\par
Lastly, take the C2 summands: supoose \(s \geq 1\) segments of edges belonging to \(P_s\) are erased and a total of \(L\) edges; again these erased components generate at most \(\theta^{L}\prod{(v_{l_i}v_{r_i})}.\) Look at the collection of segments left; reasoning as before, there is a saving of a power of \(n\) unless these are pairwise edge-disjoint even cycles. In this last case, there is only one undirected edge belonging to \(P_{B,\kappa}\) among the \(2p-l\) ones, and the overall contribution is \(O(2m ||v||^2_{\infty} \cdot p^{12p}(c(M,\epsilon,\theta))^p)\) for \(2m \cdot ||v||^2_{\infty} \leq 2||v||_{\infty} \leq \frac{1}{2}\)
arguing as in subsection~\ref{7.3}. 

\subsection{Edge Eigenspectrum of \(P_s+P_{B,\kappa}\)}\label{3.3}

\begin{equation}\label{eq3}
    Var_{**}[tr((P_s+P_{B,\kappa})^{2p+1})-tr(P_s^{2p+1})]= O((||v||^2_{\infty}+n^{-1/2})[2m \cdot s(M+\epsilon,4p) +p^{12p}(c(M,\theta,\epsilon))^{p}])
\end{equation}
\begin{equation}\label{eq4}
    \lambda_1(P_s), ||P_s|| \xrightarrow[]{p} \max{(F(\theta),2)}, 
\end{equation}
for all \(\epsilon>0.\)
\par
Begin with (\ref{eq3}): since the components are cycles, the variance is a sum over pairs of cycles \(\mathbf{i},\mathbf{j}\); if they share no edge belonging to \(P_s,\) then they are independent. Else, they can be arranged in a cycle of length \(2(2p+1)\). As in \cite{oldpaper} (see also end of subsection~\ref{0.1}), this edge can be erased at a cost of order \(\theta ||v||^2_{\infty}+n^{2(1/4-\delta_1)}n^{-1}.\) (\ref{eq10}) completes the claim.
\par
Continue with (\ref{eq4}), for which
\[Var[tr(P_s^{2p}))]=\sum_{(\mathbf{i},\mathbf{j})}{(\mathbb{E}[a_{\mathbf{i}}a_{\mathbf{j}}]-\mathbb{E}[a_{\mathbf{i}}] \cdot \mathbb{E}[a_{\mathbf{j}}])},\]
where the cycles \(\mathbf{i},\mathbf{j}\) have length \(2p\) and edges either erased or belonging to \(A_s.\) Independence implies only pairs with \(\mathbf{i},\mathbf{j}\) sharing an edge \(e\) can make nonzero contributions, and 
as in the unperturbed case, glue them into a cycle \(\mathcal{P}\) of length \(2 \cdot 2p=4p\) along the first copy of \(e\) (in \(\mathbf{i}\) and \(\mathbf{j},\) in this order). Let \(l_1,l_2,(l_3,l_4)\) be the numbers of copies of \(e=k_1k_2\) belonging to \(A_s\) (erased) in \(\mathbf{i},\mathbf{j},\) respectively. If \(l_1=l_2=0,\) then the difference is zero as \(\chi_{E} \cdot \chi_{E}=\chi_{E}.\) If \(l_1,l_2 \geq 1,\) then two copies of \(e\) can be deleted, generating a factor of \(n^{2(1/4-\delta_1)}n^{-1}=n^{-1/2-2\delta_1},\) and what remains is a cycle of length \(4p-2\) contributing \(\mathbb{E}[tr(P^{4p-2}_s)].\) Else, suppose \(l_1>0,l_2=0,\) and consequently \(l_4>0.\) Recall the analysis of \(\mathbb{E}[tr(P^{2p}_s)]:\) the change of summation freezes the erased edges and considers the remaining segments which can be arranged into a collection of disjoint even cycles (otherwise, their expectation is zero). For such cycles, at least one vertex in one of the cycles is fixed (i.e., the first copy of \(e\) from \(\mathbf{i}\) belonging to \(A_s\)), generating a factor of \(n^{-1}.\) Hence
\begin{equation}\label{varps}
    Var[tr(P_s^{2p}))] \leq n^{-1/2} \cdot 4p \cdot \mathbb{E}[tr(P^{4p-2}_s)]+n^{-1}\cdot 4p \cdot \mathbb{E}[tr(P^{4p}_s)]
\end{equation}
(the factors of \(p\) come from the preimage of a cycle of length \(2p\) into a pair \((\mathbf{i},\mathbf{j}):\) for details, see subsection \(3.2\) in \cite{oldpaper}). One more ingredient is needed for (\ref{eq4}): \(P_s-A_s\) has all eigenvalues are small apart from its largest.

\begin{lemma}\label{lem1}
Let \(T=(\theta v_iv_j\chi_{|a_{ij}| \leq n^{1/4-\delta_1}}).\) Then with probability at most \(O(n^{-\delta_1}),\)
\[\lambda_{1}(T) \geq \frac{\theta}{2}, \hspace{0.5cm} \lambda_{(2)}(T) \geq n^{-1/2+3\delta_1},\]
where \(||T||=\lambda_{(1)}(T) \geq \lambda_{(2)}(T) \geq ... \geq \lambda_{(n)}(T)\) are the ordered statistics of \((|\lambda_i(T)|)_{1 \leq i \leq n}.\)
\end{lemma}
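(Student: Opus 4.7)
The idea is to view $T$ as a small perturbation of the rank-one matrix $\theta vv^T$, whose spectrum is trivial, and then transfer the spectral information via Weyl's inequality. Write
\[
T = \theta vv^T - \tilde{E}, \qquad \tilde{E}_{ij} := \theta v_iv_j\chi_{|a_{ij}|>n^{1/4-\delta_1}},
\]
so that $\tilde{E}$ encodes precisely the entries ``removed'' from the pure rank-one perturbation by the indicator truncation. The plan is to show $\|\tilde{E}\|$ is of order at most $n^{-1/2+3\delta_1}$ on an event of probability at least $1-O(n^{-\delta_1})$, from which both eigenvalue bounds will follow immediately.

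The first step is to bound $\|\tilde{E}\|$ via its Frobenius norm. Setting $X_{ij}:=\chi_{|a_{ij}|>n^{1/4-\delta_1}}$ and using the symmetry $X_{ij}=X_{ji}$, one has
\[
\|\tilde{E}\|_F^{\,2} = \theta^2 \sum_{i,j} v_i^2 v_j^2 X_{ij}.
\]
The tail condition $\lim_{x\to\infty}x^4\mathbb{P}(|a_{11}|>x)=c$ gives $\mathbb{E}[X_{ij}] = O(n^{-(1-4\delta_1)})$, and since $\sum_{i,j}v_i^2v_j^2=\|v\|^4=1$, linearity of expectation yields
\[
\mathbb{E}\bigl[\|\tilde{E}\|_F^{\,2}\bigr] = O\!\bigl(n^{-1+4\delta_1}\bigr).
\]
Markov's inequality then gives $\mathbb{P}(\|\tilde{E}\|\ge n^{-1/2+3\delta_1}) \le \mathbb{P}(\|\tilde{E}\|_F^{\,2}\ge n^{-1+6\delta_1}) = O(n^{-2\delta_1}) = O(n^{-\delta_1})$.

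The second step is Weyl's inequality applied to $T=\theta vv^T - \tilde{E}$: since $\theta vv^T$ has eigenvalues $(\theta,0,\dots,0)$, every eigenvalue $\mu_k$ of $T$ satisfies $|\mu_k-\lambda_k(\theta vv^T)|\le\|\tilde{E}\|$. On the good event $\{\|\tilde{E}\|\le n^{-1/2+3\delta_1}\}$ this forces $\lambda_1(T)\ge\theta-n^{-1/2+3\delta_1}\ge\theta/2$ for $n$ large, and $|\mu_k|\le n^{-1/2+3\delta_1}$ for every $k\ge 2$, hence $\lambda_{(2)}(T)\le n^{-1/2+3\delta_1}$.

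There is no real obstacle here: the only thing to verify is that the slack $3\delta_1$ in the threshold comfortably dominates the $2\delta_1$ produced by Markov, which it does. The key conceptual observation is that because $v$ is normalized and the indicator ``mask'' $X_{ij}$ is controlled by the independent entries of $A$ alone, the crude bound $\|\tilde{E}\|\le\|\tilde{E}\|_F$ is already tight enough, so no matrix concentration beyond a second-moment computation is required.
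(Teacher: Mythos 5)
Your proof is correct, and it takes a genuinely different and cleaner route than the paper. The paper handles the two claims separately: it applies Chebyshev's inequality to the scalar $v^T T v$ to get $\lambda_1(T)\ge\theta/2$ with high probability, and then runs a trace-moment computation ($\mathbb{E}[tr(T^{2p})]$, $\mathbb{E}[tr(T^{2p})tr(T^2)-tr(T^{2p+2})]$) followed by Markov with $p=1$ to bound $\lambda_{(2)}(T)$. Your argument instead isolates the deviation $\tilde{E}=\theta vv^T-T$ as a single object, bounds $\|\tilde{E}\|$ via the Frobenius norm and a one-line first-moment Markov estimate, and then reads off both eigenvalue statements simultaneously from Weyl's inequality against the rank-one matrix $\theta vv^T$ (whose spectrum $(\theta,0,\dots,0)$ is explicit). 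This buys you conceptual economy: one concentration estimate does the work that the paper splits into a variance calculation plus a trace identity, and the structural reason the second eigenvalue is small — $T$ is a tiny perturbation of a rank-one matrix — is made transparent rather than being extracted from the quantity $\sum_{i\neq j}\lambda_i^2\lambda_j^{2p}$. The exponent bookkeeping ($3\delta_1$ slack vs.\ $2\delta_1$ from Markov) checks out, and the second moment is exactly $\theta^2\,\mathbb{P}(|a_{11}|>n^{1/4-\delta_1})\,\|v\|^4=O(n^{-1+4\delta_1})$ as you claim. Both proofs are valid; yours is shorter and uses only elementary perturbation theory.
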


\begin{proof}
For \(n\) sufficiently large, Chebyshev's inequality yields
\(\lambda_1(T) \leq \frac{\theta}{2}\) with probability at most \(O(n^{-1+4\delta_1})\) since
\[\mathbb{E}[v^TTv]=\theta\mathbb{P}(|a_{11}| \leq n^{1/4-\delta_1}) \geq \frac{3\theta}{4},\]
\[Var(v^TTv) \leq 4\theta^2\mathbb{P}(|a_{11}| \leq n^{1/4-\delta_1})(1-\mathbb{P}(|a_{11}| \leq n^{1/4-\delta_1}))\sum_{i,j}{v_i^2v_j^2} \leq 4\theta^2\mathbb{P}(|a_{11}| > n^{1/4-\delta_1}) \leq 8c\theta^2n^{-1+4\delta_1}.\]
\par
Arguing as for \(P_1\) and \(P_2\) in the end of subsection~\ref{7.1},
\[\mathbb{E}[tr(T^{2p})]=\theta^{2p}+O(\theta^{2p}pn^{-1+4\delta_1})\]
since \(1 \geq (\mathbb{P}(|a_{11}| \leq n^{1/4-\delta_1}))^{p} \geq (1-4cn^{-1+4\delta_1})^p \geq 1-4cpn^{-1+4\delta_1},\) with the same rationale providing
\[\mathbb{E}[tr(T^{2p})tr(T^2)-tr(T^{2p+2})]=O(\theta^{2p+2}pn^{-1+4\delta_1}).\]
Then when \(p=o(n^{1-4\delta_1}),\) with probability at most \(O(n^{-\delta_1}),\)
\[\lambda^2_{(2)}(T) \geq 2^{2p-2}pn^{-1+5\delta_1}\]
using
\[tr(T^{2p})tr(T^2)-tr(T^{2p+2})=\sum_{1 \leq i,j \leq n, i \ne j}{\lambda^2_i(T)\lambda^{2p}_j(T)} \geq ||T||^{2p} \cdot \lambda^2_{(2)}(T) \geq (\frac{\theta}{2})^{2p}\lambda^2_{(2)}(T).\]
The result ensues by choosing \(p=1.\)

\end{proof}

\par
Consider now (\ref{eq4}): Markov's inequality in (\ref{eq20}) for \(p=\lfloor \frac{4\log{n}}{\epsilon} \rfloor\) together with Lemma~\ref{lemma20} yield:
\[\mathbb{P}(||P_s|| \geq \max{(F(\theta),2)}+\epsilon) \leq (2^{2p+1}n+\theta^{2p}+2p \cdot s_1(\theta,p)+O(n^{-1}p^5s_1(\theta,p))+O(p(\max{(\theta,2)})^{2p}))/(\max{(F(\theta),2)}+\epsilon)^{2p}=o(1)\]
since \(F(\theta) \geq \theta, e^{x} \leq 1+2x, x \in [0,\log{2}],\)
\[2^{2p+1}n(2+\epsilon)^{-2p} \leq 2n\exp(-p\epsilon/2)=o(1).\]
For the lower bound, note that \(\lambda_1(P_s) \geq \lambda_1(A_s)+\lambda_n(P_s-A_s) \geq \lambda_1(A_s)-n^{-1/3}:\)
Lemma~\ref{lem1} renders \[\lambda_1(P_s-A_s) \geq \theta-\epsilon, \hspace{0.5cm} tr((P_s-A_s)^2) \leq \theta^2+\epsilon,\] 
whereby \(|\lambda_n(P_s-A_s)|=O(\epsilon)<||P_s-A_s||.\) This yields \(||P_s|| \geq 2-\epsilon\) for \(\epsilon>0\) with high probability, providing the result if \(F(\theta)<2.\) Suppose next \(F(\theta)>2+\delta,\) and take \(0<\epsilon<C^2(\delta)/4, p=\lfloor \frac{\log{n}}{\sqrt{\epsilon}} \rfloor:\)  
\[\mathbb{P}(||P_s|| \leq \max{(F(\theta),2)}-\epsilon) \leq \mathbb{P}(tr(P^{4p}_s) \leq (\max{(F(\theta),2)}-\epsilon)^{2p} \cdot tr(P^{2p}_s)) \leq\]
\[\leq \mathbb{P}(tr(P^{2p}_s) \geq (\max{(F(\theta),2)}+\epsilon)^{2p})+\mathbb{P}(tr(P^{4p}_s) \leq ((\max{(F(\theta),2)})^2-\epsilon^2)^{2p}) \leq\]
\[\leq 2n(\frac{2}{\max{(F(\theta),2)+\epsilon})})^{2p}+4p^5(\frac{\max{(F(\theta),2)}+\epsilon/2}{\max{(F(\theta),2)}+\epsilon})^{2p}+\mathbb{P}(tr(P^{4p}_s) \leq \mathbb{E}[tr(P^{4p}_s)]/2) \leq\]
\[\leq 2ne^{-C(\delta)p}+4p^5(1-\epsilon/4)^{2p}+4Var[tr(P^{4p}_s)]/(\mathbb{E}[tr(P^{4p}_s)])^2 =o(1)\]
because
\(n^{-1/2}(\max{(F(\theta),2)}+\epsilon)^{4p}/(\max{(F(\theta),2)}-\epsilon)^{4p} \leq n^{-1/2}e^{4\epsilon p}=o(1).\)
This completes the proof of (\ref{eq4}). To show the same result holds for \(\lambda_1(P_s),\) solely a lower bound is needed: again if \(F(\theta) \leq 2,\) then the result follows; else, use the odd moment trace inequality
\begin{equation}\label{eq200}
    \mathbb{E}[tr(P_s^{2p+1})] \geq s_1(\theta,p+1/2),
\end{equation}
which ensues from the justification of (\ref{eq20}) and the variance bound (\ref{eq4}) (again, \(2p+1\) can be replaced by \(2p\)):
\[\mathbb{P}(\lambda_1(P_s) \leq \max{(F(\theta),2)}-\epsilon) \leq \mathbb{P}(tr(P^{4p+1}_s) \leq (\max{(F(\theta),2)}-\epsilon)^{2p+1} \cdot tr(P^{2p}_s)) \leq\]
\[\leq \mathbb{P}(tr(P^{2p}_s) \geq (\max{(F(\theta),2)}+\epsilon)^{2p})+\mathbb{P}(tr(P^{4p+1}_s) \leq (\max{(F(\theta),2)}-\epsilon) \cdot ((\max{(F(\theta),2)})^2-\epsilon^2)^{2p}) \leq\]
\[\leq o(1)+\mathbb{P}(tr(P^{4p+1}_s) \leq \mathbb{E}[tr(P^{4p+1}_s)]/2) \leq o(1)+4Var[tr(P^{4p+1}_s)]/(\mathbb{E}[tr(P^{4p+1}_s)])^2 =o(1).\]
\par
Having completed the proofs of (\ref{eq3}) and (\ref{eq4}), reason as in the unperturbed case. To show the result for \(||P_s+P_{B,\kappa}||,\) both upper and lower bounds follow from \(P_{B,\kappa}\) being sparse, Lemmas~\ref{lemma7old} and \ref{lemma9old}, after which the odd moment analog of (\ref{eq10}):
\[\mathbb{E}_{**}[tr((P_s+P_{B,\kappa})^{2p+1})-tr(P_s^{2p+1})]=O((c(M,\theta,\epsilon))^{p}||v||_{\infty})\]
can be employed to deduce the result for \(\lambda_1(P_s+P_{B,\kappa}).\)

\section{Localized Eigenvector with \(\alpha=4\)}\label{sect2}

Suppose without loss of generality \(|v_i|=v_{(i)}, 1 \leq i \leq n;\) take \(\delta_1, \delta_2, \delta_3, \kappa>0\) as in section~\ref{sect3}, \(\delta_0>0\) such that \(\newline |v_{k}| \geq \delta_0, n \geq n_0,\) \(S(k)=\{(i,j):1 \leq i,j \leq k\}, (m_n) \subset \mathbb{N}, \lim_{n \to \infty}{m_n}=\infty,\) and decompose \(P\) as
\[P=P_k+P_s+P_m+P_{b,\kappa}+P_{B,\kappa},\]
\[(P_k)_{ij}=\begin{cases}
    \frac{1}{\sqrt{n}} a_{ij}, &  (i,j) \in S(k) \\
    0, & else
    \end{cases},\]
\[P_m=(p_{ij}\chi_{n^{1/4-\delta_1}<|a_{ij}| \leq n^{3/8+\delta_2}, (i,j) \not \in S(k)}), \hspace{0.1cm} P_{b,\kappa}=(p_{ij}\chi_{n^{3/8+\delta_2}<|a_{ij}| \leq \kappa \sqrt{n}, (i,j) \not \in S(k)}),\] 
\[P_s+P_{B,\kappa}=(p_{ij}\chi_{|a_{ij}| \leq n^{1/4-\delta_1}, (i,j) \not \in S(k)})+(p_{ij}\chi_{\kappa \sqrt{n} \leq |a_{ij}|,(i,j) \not \in S(k)}),\]
with 
\[(P_s)_{ij}=\begin{cases}
    0, &  (i,j) \in S(k) \\
    p_{ij}\chi_{|a_{ij}| \leq n^{1/4-\delta_1}}, & else
    \end{cases}, \hspace{0.5cm} (P_{B,\kappa})_{ij}=\begin{cases}
    \theta v_iv_j, &  (i,j) \in S(k) \\
    p_{ij}\chi_{|a_{ij}|>\kappa \sqrt{n}}, & else
    \end{cases}.\]
\(P_k\) is negligible because \(k \leq \delta_0^{-2},\) from which
\[||P_k||^2 \leq n^{-1}\max_{1 \leq i \leq k}{\sum_{1 \leq j \leq k}{a^2_{ij}}} \leq n^{-1} \cdot k(\max_{1 \leq i,j \leq k}{|a_{ij}|})^2=o_p(n^{-1/2}).\]
 Let \(\mathcal{S}=\mathcal{S}_{n,m}, E(S,\kappa,M), E^{B}(S,\kappa,M)\) be defined as in section~\ref{sect3}. In what follows, \(S \in \mathcal{S}\) is again fixed, and Theorem~\ref{th1} ensues from
\begin{equation}\label{easy10}
    ||P_m||=o_p(1), \hspace{0.5cm} ||P_{b,\kappa}|| \leq 2\kappa, \hspace{0.5cm} \lambda_1(P_s), ||P_s|| \xrightarrow[]{p} 2,
\end{equation}
\begin{equation}\label{easy12}
    ||P_s+P_{B,\kappa}||-\max{(f(\theta),f(max(A)))} \xrightarrow[]{p} 0,
\end{equation}
\begin{equation}\label{v2}
    \mathbb{E}_{**}[tr((P_s+P_{B,\kappa})^{2p+1})-tr(P_s^{2p+1})] \geq (1-o(1))^ps_2(\theta,p+1/2)-C(p)n^{-\delta_1}
\end{equation}
\begin{equation}\label{v3}
    Var_{**}(tr((P_s+P_{B,\kappa})^{2p+1})-tr(P_s^{2p+1}))=O(n^{-\delta_1}(C(\theta,M))^{p}),
\end{equation}
with the inequalities holding with high probability and the last two results uniform in \(S \in \mathcal{S}.\)
\par
Let us see first how these entail the theorem. By letting \(\kappa \to 0,\) (\ref{easy10}) implies it suffices to justify the convergence for \(\lambda_1(P_s+P_{B,\kappa});\) moreover, a lower bound suffices in light of (\ref{easy12}). For any \(\epsilon>0,\) with high probability
\begin{equation}\label{low13}
    \lambda_1(P_s+P_{B,\kappa}) \geq 2-\epsilon
\end{equation}
because \(\lambda_1(P_s+P_{B,\kappa}) \geq \lambda_1(P_s)+\lambda_n(P_{B,\kappa}),\) \(P_{B,\kappa}\) has at most \(2m\) nonzero entries together with (\ref{easy10}). The result therefore follows for \(max(A),\theta \leq 1.\) Otherwise, when \(max(A),\theta>1+\delta\) for \(\delta>0,\) if
\[\lambda_1(P_s+P_{B,\kappa})<\max{(f(\theta),f(max(A)))}-2\epsilon, ||P_s+P_{B,\kappa}|| \geq \max{(f(\theta),f(max(A)))}-\epsilon, ||P_s|| \leq  2+\epsilon,\] then Lemma~\ref{lemma9old} renders for \(\epsilon \leq \epsilon(\delta),m \leq c(\delta,\epsilon)\log{\log{p}},\)
\[tr((P_s+P_{B,\kappa})^{2p+1})-tr(P_s^{2p+1}) \leq 2m \cdot (\max{(f(\theta),f(max(A)))}-2\epsilon)^{2p+1}-(\max{(f(\theta),f(max(A)))}-\epsilon)^{2p+1}+\]
\[+3m \cdot  (2+\epsilon)^{2p+1} \leq -(\max{(f(\theta),f(max(A)))}-\epsilon)^{2p+1}/2;\]
as the right-hand side of (\ref{v2}) is at least \(1,\) this bound, (\ref{v3}), and Chebyshev's inequality give the desired claim.
\par
Since
\[P_{b,\kappa}=\frac{1}{\sqrt{n}}(a_{ij}\chi_{n^{3/8+\delta_2}<|a_{ij}| \leq \kappa \sqrt{n}})+(\theta v_iv_j\chi_{n^{3/8+\delta_2}<|a_{ij}| \leq \kappa \sqrt{n}}):=A_{b,\kappa}+P_3,\]
\[P_m=\frac{1}{\sqrt{n}}(a_{ij}\chi_{n^{1/4-\delta_1}<|a_{ij}| \leq n^{3/8+\delta_2}})+(\theta v_iv_j\chi_{n^{1/4-\delta_1}<|a_{ij}| \leq n^{3/8+\delta_2}}):=A_m+P_4,\]
(\ref{easy10}) ensues because \(||A_m||=o_p(1),||A_{b,\kappa}|| \leq \kappa, ||P_3||=o_p(1), ||P_4||=o_p(1),||P_5||=o_p(1):\) the first two have already been justified in section~\ref{sect3}, and for the last reason as in subsection~\ref{7.1}; for \(P_s,\) use (\ref{eq4}) (here \(\theta\) is replaced by \(\theta\sum_{i>k}{v^2_i}\)).
\par
Subsection~\ref{2.1} presents the proof of (\ref{easy12}), based on the analogues of 
(\ref{v2}) and (\ref{v3}) for even powers:
\begin{equation}\label{v22}
    \mathbb{E}_{*}[tr((P_s+P_{B,\kappa})^{2p})-tr(P_s^{2p})] \leq 2p \cdot s_2(\theta,p)+\theta^{2p}+2m \cdot s(M,p)+C(p)n^{-\delta_1},
\end{equation}
\begin{equation}\label{v222}
    \mathbb{E}_{**}[tr((P_s+P_{B,\kappa})^{2p})-tr(P_s^{2p})] \geq  s_2(\theta\sum_{1 \leq i \leq k}{v^2_i},p)-C(p)n^{-\delta_1},
\end{equation}
\begin{equation}\label{v33}
    Var_{*}(tr((P_s+P_{B,\kappa})^{2p})-tr(P_s^{2p}))=O(n^{-\delta_1}(C(\theta,M))^{p}),
\end{equation}
\begin{equation}\label{v333}
    Var_{**}(tr((P_s+P_{B,\kappa})^{2p})-tr(P_s^{2p}))=O(n^{-\delta_1}(C(\theta,M))^{p}),
\end{equation}
where \(s_2\) is defined by (\ref{longsum}). These identities entail the claimed convergence by reasoning as in subsection~\ref{3.3} and employing \(\lambda_1(P_s), ||P_s|| \xrightarrow[]{p} 2\) together with Lemmas~\ref{lemma7old} and \ref{lemma9old} (\(m=m_n, p\) can diverge arbitrarily slow). Since expectations of trace differences of even powers can be used to bound their variances (recall the gluing underlying (\ref{eq3}) described in subsection~\ref{0.1}), (\ref{v22}) and (\ref{v222}) suffice (subsection~\ref{2.1}). Finally, a slight adjustment of their justification renders (\ref{v2}) and (\ref{v3}), completing thus the proof of Theorem~\ref{th1}.

\subsection{Trace Difference}\label{2.1}

Under the conditioning \(*\) and the replacement of \(a_{ij},1 \leq i,j \leq k\) by zeros, \(P_{B,\kappa}\) differs from its counterpart \(A_{B,\kappa}\) in that \((P_{B,\kappa})_{ij}=\theta v_iv_j, 1 \leq i,j \leq k\) (whereas \((A_{B,\kappa})_{ij}=0, 1 \leq i,j \leq k\)). Focus on (\ref{v22}) first, as it will become apparent that (\ref{v222}) can be justified in a similar vein. 
\par
Distinct elements of \(S(k)\) can share an entry, whereas this is prohibited for \(S\) by its definition: this leads to more first order contributors to the expectation in the current situation than in the unperturbed case. Steps \(1'-6'\) described in subsection~\ref{0.1} still allow counting them. Lemma~\ref{lemma3old} continues to hold with a slight modification: cycles which become elements of \(\mathcal{C}(l)\) after zooming out the clusters of edges belonging to \(S(k)\) are also of type \((III)\) (i.e., map \(\mathbf{i}\) to \(\mathcal{C}(l)\) by replacing these segments with a vertex \(\rho;\) what results is an element of \(\mathcal{C}(l)\) since otherwise there will be some decay in such cycles, which would make them negligible: see the proof of this lemma in \cite{oldpaper}). Apart from the cycles coming from \(S,\) yielding at most \(2m \cdot s_1(M,p)+C(p)n^{-\delta_1},\) there are the cycles containing either only edges in \(S(k)\) (these contribute at most \(\theta^{2p}\)) or of type \((III)\) with at least one edge in \(S(k)\) and one belonging to \(P_s.\) Their overall contribution is, up to a factor of \(\sum_{1 \leq i \leq k}{v^2_i},\)
\begin{equation}\label{longsum}
    \sum_{1 \leq l \leq p-1, 1 \leq t \leq l+1,0 \leq q \leq t-1}{(\theta \sum_{1 \leq i \leq k}{v^2_i})^{2p-2l}\binom{t}{q+1} \binom{2p-2l-1}{q}b_{l,t}}:=s_2(\theta\sum_{1 \leq i \leq k}{v^2_i},p).
\end{equation}
To justify this, take \(\mathbf{i},\) an even cycle of type \((III),\) length \(2p\) with \(2p-2l\) edges belonging to \(S(k),\) map it to a cycle \(\mathbf{i}_c \in \mathcal{C}(l),\) denote by \(\rho \in V(\mathbf{i}_c)\) the vertex corresponding to the suppressed \(2p-2l\) edges, \(t\) its multiplicity in \(\mathbf{i}_c,\) and suppose \(q+1\) of these apparitions hide clusters of edges in \(S(k).\) The constraints on \(l, t, q\) are evident, and what remains is counting the preimages for a given triple \((l,t,q).\) For each maximal group of consecutive edges in \(S(k),\) solely its first and last vertices interact with \(\mathbf{i}_c\) and influence its structure; once these are fixed, establish the sizes of the \(q+1\) clusters: this can be done in \(\binom{(2p-2l-q-1)+(q+1)-1}{(q+1)-1}=\binom{2p-2l-1}{q}\) ways (recall (\ref{atl})) and there is a bijection between the tuples of interest and \(\{(x_1,\hspace{0.05cm}  ... \hspace{0.05cm} , x_{q+1}): x_1+...+x_{q+1}=2(p-l)-q-1, x_i \geq 0\}.\) The power of \(\theta\) is clear, \(\binom{t}{q+1}\) counts the number of sets of \(q+1\) copies of \(\rho\) pulled to clusters, while \(b_{l,t}\) gives the number of such vertices \(\rho.\) It remains to show that for each cluster configuration, its contribution is in \([(\sum_{1 \leq i \leq k}{v^2_i})^{q+1},(\sum_{1 \leq i \leq k}{v^2_i})^{q}]:\) note there are \(2p-2l-q-1\) hidden vertices (i.e., \(|\{s, 1 \leq s \leq 2p-1, (i_{s-1},i_s),(i_{s},i_{s+1}) \in \mathbf{i}''\}|=2p-2l-q-1\)), which generate a factor of \((\sum_{1 \leq i \leq k}{v^2_i})^{2p-2l-q-1}\) as they are arbitrary elements of \(s(k):=\{1,2, \hspace{0.05cm} ... \hspace{0.05cm}, k\}.\)
\par
\underline{Case 1:} \(\rho\) is not the first vertex of \(\mathbf{i}_c.\) 
Let \((s_0,\rho,s_1), \hspace{0.05cm} ... \hspace{0.05cm}, (s_{2t-2},\rho,s_{2t-1})\) be the apparitions of \(\rho\) in \(\mathbf{i}_c\) ordered increasingly. If \(t=1,\) then \(s_0=s_1,\) and the preimage of \(\rho\) is \((v_0,v_1, \hspace{0.05cm} ... \hspace{0.05cm},v_{2p-2l-1},v_0)\)
with \(v_0, v_1, \hspace{0.05cm} ... \hspace{0.05cm},v_{2p-2l-1} \in s(k),\) yielding \(\sum_{1 \leq i \leq k}{v^2_i}\) (\(v_0=v_{2p-2l}\) because otherwise \(\mathbf{i}\) is odd). 
\par
Else, \((s_2,\rho),(s_4,\rho), \hspace{0.05cm} ... \hspace{0.05cm},(s_{2t-2},\rho),(\rho,s_{2t-1})\) are unmarked, and so \((\rho,s_1),(\rho,s_3), \hspace{0.05cm} ... \hspace{0.05cm} ,(\rho,s_{2t-3})\) are marked. Consider again the left- and right-most vertices of the clusters in \(\mathbf{i}.\) Suppose the first apparition of \(\rho\) is pulled back to a single vertex (i.e., no hidden cluster at this location): this can be any element of \(s(k);\) for each of the next \(t-2\) copies, if it is pulled to a cluster, then the right-most vertex can be any element in \(s(k),\) whereas the left-most is predetermined (the edge adjacent to it preceding the cluster is unmarked); else, it is pulled to a single vertex, and there is just one possibility (the left-most vertex is still unmarked and the right-most has to coincide with it); finally, the last copy of \(\rho\) is also fully determined since both edges are unmarked. Hence there are \(q+1\) vertices to choose, from which the result follows. If \((s_0,\rho,s_1)\) is pulled to a cluster, then a similar reasoning gives two vertices to pick for it (both are marked), and \(q-1\) or \(q-2\) more (depending on whether the last copy of \(\rho\) is pulled back to a cluster or a single vertex). 
\par
\underline{Case 2:} \(\rho\) is the first vertex of \(\mathbf{i}_c\) with apparitions \((\rho,s_1), (s_2,\rho,s_3), \hspace{0.05cm} ... \hspace{0.05cm}, (s_{2t-2},\rho,s_{2t-1}),(s_{2t},\rho);\) because \(\newline (s_2,\rho),\hspace{0.05cm}  ... \hspace{0.05cm} ,(s_{2t-2},\rho),(s_{2t},\rho)\) are unmarked, \((\rho,s_3),\hspace{0.05cm}  ... \hspace{0.05cm} ,(\rho,s_{2t-1}),(s_{2t},\rho)\) are marked. The rationale is analogous to the one employed in case \(1,\) concluding (\ref{longsum}).
\par
Let \(l=px,q=py,t=pz,\) and employ again (\ref{stirling}), which renders with Lemma~\ref{lemma9},
\[\lim_{p \to \infty}{(s_2(\theta,p))^{1/p}}=f^2(\theta).\]
This concludes the proof of (\ref{v22}): (\ref{v222}) can be justified in a similar vein (in light of (\ref{longsum})).

\begin{lemma}\label{lemma9}
Let \(D=\{(x,y,z) \in [0,1]^3: y \leq z \leq x, y \leq 2-2x\},\) and \(h:D \to \mathbb{R},\)
\[h(x,y,z)=\theta^{2-2x} \cdot \frac{z^z}{y^y \cdot (z-y)^{z-y}} \cdot \frac{(2-2x)^{2-2x}}{y^y \cdot (2-2x-y)^{2-2x-y}} \cdot \frac{(2x-z)^{2x-z}}{x^x \cdot (x-z)^{x-z}},\]
where \(0^0:=\lim_{x \to 0+}{x^x}=1.\) Then
\[\sup_{(x,y,z) \in D}{h(x,y,z)}=f^2(\theta).\]
\end{lemma}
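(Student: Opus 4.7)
The plan is to maximize $\log h$ over the compact set $D$, on which $h$ extends continuously under the convention $0^{0}:=1$, separating the analysis into the interior critical points and the boundary of $D$. A direct differentiation yields
\begin{equation*}
    \partial_y \log h = \log\frac{(z-y)(2-2x-y)}{y^2}, \qquad \partial_z \log h = \log\frac{z(x-z)}{(z-y)(2x-z)}.
\end{equation*}
Setting $\partial_z \log h = 0$ rearranges to $z(x+y) = 2xy$, so $z = 2xy/(x+y)$ is the harmonic mean of $x$ and $y$. Plugging this into $\partial_y \log h = 0$ and simplifying reduces the equation to $y(x+y) = (x-y)(2-2x-y)$, which collapses to the single relation $y = x(1-x)$; correspondingly $z = 2x(1-x)/(2-x)$. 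Notably, neither relation involves $\theta$, so after this reduction only $x$ remains to optimize.

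With $y$ and $z$ pinned to $x$, the equation $\partial_x \log h = 0$ becomes $\theta^2 = (2-x)/x$, giving $x^* = 2/(\theta^2+1)$; this lies in the interior of $D$ precisely when $\theta > 1$. Evaluating $h$ at $(x^*,y^*,z^*)$ is transparent because the ratios $z/y$, $z/(z-y)$, $(2-2x)/y$, $(2-2x)/(2-2x-y)$, $(2x-z)/x$, $(2x-z)/(x-z)$ all simplify to either $c := \theta^2+1$ or $c/\theta^2$. Collecting coefficients via $(z-y)+y+(x-z) = x$ and $y + (2-2x-y) + x = 2-x$, the $\log c$ and $\log(c/\theta^2)$ contributions telescope, leaving $\log h = -2\log\theta + 2\log c$, so $h(x^*,y^*,z^*) = (\theta+\theta^{-1})^2$.

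For the boundary, I would handle each face separately. On $y = 0$, the function reduces to $\theta^{2-2x}(2x-z)^{2x-z}/(x^x(x-z)^{x-z})$, which is strictly decreasing in $z$ since its $z$-derivative is $\log((x-z)/(2x-z)) < 0$; the sup is therefore at $z = 0$ and equals $\max_{x\in[0,1]} \theta^{2-2x} 4^x = \max(\theta^2,4) \le f^2(\theta)$. The face $y = 2-2x$ forces the constraint tight, collapses one factor to $1$, and reduces to a two-variable optimization of the same type appearing in Lemma~\ref{lemma79}. Similar reductions treat $z = y$, $z = x$, $x = 0$, and $x = 1$; each yields a bound at most $\max(4,(\theta+\theta^{-1})^2) = f^2(\theta)$. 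In particular, for $\theta \le 1$, where $x^* \ge 1$ places the interior critical point outside $D$, the supremum is attained on the boundary at $(1,0,0)$ and equals $4 = f^2(\theta)$.

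The main obstacle is the boundary bookkeeping, since the three-variable domain has several faces and one must verify that none produces a value exceeding $f^2(\theta)$. What rescues the interior calculation from becoming unwieldy is the surprising $\theta$-independence of the $y$ and $z$ critical-point equations: this decouples the problem into a purely geometric step (determining $y^*$ and $z^*$ as algebraic functions of $x$) followed by a clean scalar optimization in $x$ driven by $\theta$, after which the evaluation of $h$ at the critical point telescopes to $(\theta+\theta^{-1})^2$.
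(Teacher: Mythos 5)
Your interior computation is correct and matches the paper's: the same partial derivatives, the same critical-point relations $z=2xy/(x+y)$ and $y=x(1-x)$, the same $x^*=2/(\theta^2+1)$, and the same value $(\theta+\theta^{-1})^2$ (your telescoping evaluation using $c=\theta^2+1$ is a nice, clean way to get it). However, there is a genuine gap in the boundary treatment. You reduce the problem to ``interior critical point plus face-by-face checks,'' but you only actually verify the face $y=0$. For $z=y$, $z=x$, and $y=2-2x$ you assert that ``similar reductions'' give bounds at most $f^2(\theta)$ without carrying them out, and the claim that the face $y=2-2x$ ``reduces to a two-variable optimization of the same type appearing in Lemma~\ref{lemma79}'' is not correct: substituting $y=2-2x$ into $h$ gives $\theta^{2-2x}z^z(2x-z)^{2x-z}/\bigl[(2-2x)^{2-2x}(z-2+2x)^{z-2+2x}x^x(x-z)^{x-z}\bigr]$, which is not the function $h_1$ or $h_2$ from that lemma. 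Each of these three faces is a nontrivial two-variable optimization, so leaving them unverified is not a small omission.

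The paper's argument sidesteps the face enumeration entirely by iterated one-dimensional maximization, and this is worth noting because it is what makes the proof short. For fixed $(x,y)$ with $x,y>0$, the sign of $\partial_z\log h=\log\bigl(1+\tfrac{2xy-(x+y)z}{(z-y)(2x-z)}\bigr)$ shows $h$ increases up to $z=2xy/(x+y)$ and decreases after, and since the harmonic mean lies strictly inside $(y,x)$, the maximum over $z\in[y,x]$ cannot sit on the faces $z=y$ or $z=x$. Plugging in the optimal $z$ yields $g_2(x,y)$, and the analogous sign analysis of $\partial_y\log g_2$ gives $y=x(1-x)$, which is strictly below $\min(x,2-2x)$ for $0<x<1$, so the face $y=2-2x$ is excluded as well. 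Only the degenerate case $y=0$ (your $g_1$) and the endpoint $x\in\{0,1\}$ need separate inspection. If you want to keep your ``interior plus boundary'' framing you would need to actually work through the three omitted faces; adopting the paper's nested maximization makes that unnecessary.
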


\begin{proof}
Note
\[\frac{1}{h} \cdot \frac{\partial{h}}{\partial{y}}=\log{\frac{(z-y)(2-2x-y)}{y^2}}, \hspace{0.5cm} \frac{1}{h} \cdot \frac{\partial{h}}{\partial{z}}=\log{\frac{z(x-z)}{(z-y)(2x-z)}};\]
in particular,
\[\frac{\partial{h}}{\partial{y}}=0 \Longleftrightarrow y=\frac{z(2-2x)}{z+2-2x}, \hspace{1cm} \frac{\partial{h}}{\partial{z}}=0 \Longleftrightarrow z=\frac{2xy}{x+y}.\]
\par
By a slight abuse of notation, let \((x,y,z) \in \arg{\sup{h}}.\) Consider \(z \in [y,x]:\) if \(y=0,\) then 
\[h(x,y,z)=\theta^{2-2x} \cdot \frac{(2x-z)^{2x-z}}{x^x(x-z)^{x-z}}:=g_1(x,z)\] 
with \(\frac{1}{g_1} \cdot \frac{\partial{g_1}}{\partial{z}}=\log{\frac{x-z}{2x-z}}<0\) for \(z \in (0,x),\) and \(\sup_{x \in [0,1]}{g_1(x,0)}=\max{(\theta^2,4)}.\) Else, \(x,y>0,\) 
\[\frac{1}{h} \cdot \frac{\partial{h}}{\partial{z}}=\log{\frac{z(x-z)}{(z-y)(2x-z)}}=\log{(1+\frac{2xy-(x+y)z}{(z-y)(2x-z)})}\]
and \(\frac{2xy}{x+y} \in [y,x]\) entail \(z=\frac{2xy}{x+y}.\) What remains is \(y \leq \min{(x, 2-2x)},\) and 
\[g_2(x,y)=\theta^{2-2x} \cdot (\frac{z-y}{y})^y \cdot \frac{(2-2x)^{2-2x}}{y^y \cdot (2-2x-y)^{2-2x-y}} \cdot \frac{(2x-z)^{2x}}{x^x \cdot (x-z)^{x}}\]
because \(\frac{\partial{g_1}}{\partial{z}}=0;\) employing \(\frac{z-y}{y}=\frac{x-y}{x+y},2-\frac{z}{x}=\frac{2x}{x+y}, 1-\frac{z}{x}=\frac{x-y}{x+y},\)
\[g_2(x,y)=\theta^{2-2x} \cdot \frac{(2-2x)^{2-2x}}{y^y \cdot (2-2x-y)^{2-2x-y}} \cdot \frac{(2x)^{2x}}{(x-y)^{x-y}(x+y)^{x+y}}.\]
Since
\[\frac{1}{g_2} \cdot \frac{\partial{g_2}}{\partial{y}}=\log{\frac{(x-y)(2-2x-y)}{y(x+y)}}=\log{(1+\frac{2x(1-x)-2y}{y(x+y)})},\] 
the maximum is attained at \(y=x(1-x) \leq \min{(x,1-x)} \leq \min{(x,2-2x)}.\) \(\frac{\partial{g_2}}{\partial{y}}=0\) renders
\[q(x)=\theta^{2-2x} \cdot \frac{(2-2x)^{2-2x}}{(2-2x-y)^{2-2x}} \cdot \frac{(2x)^{2x}}{(x-y)^{x}(x+y)^{x}},\]
which together with \(1-\frac{y}{2-2x}=\frac{2-x}{2},1-\frac{y^2}{x^2}=1-(1-x)^2=x(2-x)\) gives
\[q(x)=\theta^{2-2x} \cdot \frac{4}{x^x(2-x)^{2-x}}.\]
Finally, 
\(\frac{q'(x)}{q(x)}=\log{\frac{2-x}{\theta^2x}}.\) The critical point is \(x_2=\frac{2}{\theta^2+1}\) if \(\theta \geq 1, q(\frac{2}{\theta^2+1})=\frac{4\theta^2}{(2-x_2)^2}=(\theta+\frac{1}{\theta})^2,\)
yielding the supremum of \(h\) is
\[\max{(4,(\theta+\frac{1}{\theta})^2\chi_{\theta \geq 1})}=f^2(\theta)\]
as \(q(0)=\theta^2,q(1)=4.\) This completes the proof of the lemma.
\end{proof}


\end{document}